\numberwithin{equation}{section}
\newcounter{hours}\newcounter{minutes}
\theoremstyle{plain}
\newtheorem{theorem}{Theorem}[section]
\newtheorem{definition}[theorem]{Definition} 
\newtheorem{lemma}[theorem]{Lemma} 
\newtheorem{proposition}[theorem]{Proposition} 
\newtheorem{remark}[theorem]{Remark}
\newtheorem{corollary}[theorem]{Corollary} 
\newtheorem{example}[theorem]{Example}
\newtheorem*{prop*}{Proposition}
\newcommand\RedeclareMathOperator{%
  \@ifstar{\def\rmo@s{m}\rmo@redeclare}{\def\rmo@s{o}\rmo@redeclare}%
}
\newcommand\rmo@redeclare[2]{%
  \begingroup \escapechar\m@ne\xdef\@gtempa{{\string#1}}\endgroup
  \expandafter\@ifundefined\@gtempa
     {\@latex@error{\noexpand#1undefined}\@ehc}%
     \relax
  \expandafter\rmo@declmathop\rmo@s{#1}{#2}}
\newcommand\rmo@declmathop[3]{%
  \DeclareRobustCommand{#2}{\qopname\newmcodes@#1{#3}}%
}
\def\al{\alpha}
\def\om{\omega}
\def\Om{\Omega}
\def\lam{\lambda}
\def\Lam{\Lambda}
\def\real{\mathbb R}
\def\K{\mathcal K}
\def\M{\mathcal M}
\def\R{\mathbb R}
\def\Rd{{\mathbb R}^d}
\def\S{\mathcal S}
\def\loc{\textnormal{loc}}
\def\Indicator{{\mathbbm{1}}}
\def\supp{\textnormal{supp}}
\def\intd{\int_{\Rd}}
\def\d{\mathrm{d}}
\def\and{\quad\text{and}\quad}
\RedeclareMathOperator{\div}{\textnormal{div}}
\def\polhk#1{\setbox0=\hbox{#1}{\ooalign{\hidewidth
	    \lower1.5ex\hbox{`}\hidewidth\crcr\unhbox0}}}
\newcommand{\abs}[1]{\left| #1 \right|}
\newcommand{\norm}[1]{\lVert#1\rVert}
\begin{document}

\raggedbottom

\title[Partial Hölder regularity for nonlocal equations with $L^1$-kernels]{Partial Hölder regularity for fully nonlinear nonlocal parabolic equations with integrable kernels}

\author{Minhyun Kim}
\author{Luke Schleef}
\author{Russell W. Schwab}

\address{Department of Mathematics \& Research Institute for Natural Sciences \\
Hanyang University\\
222 Wangsimni-ro\\
Seoul, South Korea}
\email{minhyun@hanyang.ac.kr}
\address{Fakultät für Mathematik\\
Universität Bielefeld\\
Universitätsstraße 25\\
33615 Bielefeld, Germany}
\email{luke.schleef@uni-bielefeld.de}
\address{Department of Mathematics\\
Michigan State University\\
619 Red Cedar Road\\
East Lansing, MI 48824}
\email{rschwab@math.msu.edu}

\begin{abstract}

In this work, we consider solutions to (fully nonlinear) parabolic integro-differential equations with integrable interaction kernels.  A typical equation would be that obtained by starting with, for $s\in(0,1)$, the $s$-fractional heat equation, but replacing the interaction kernel in the integro-differential term with one which has been truncated, for $\rho>0$, at the value $\rho^{-d-2s}$, hence integrable.  We show that solutions to these equations have a partial regularity estimate which captures differences of the solution up to the scale at which the kernel has a truncation in its singularity.  The estimates we provide are robust with respect to the truncation parameter, and they include the existing results for the original operators without truncation.  There are some earlier results for linear and elliptic cases of this situation of integrable interaction kernels, and so our work is a generalization of those to the nonlinear and parabolic setting.

\end{abstract}

\thanks{M. Kim acknowledges support from the National Research Foundation of Korea (NRF) grant funded by the Korean government (MSIT) (RS-2023-00252297).}
\thanks{R. Schwab acknowledges support from the Simons Foundation for a Travel Support for Mathematicians grant.}
\keywords{Integro-differential equations; Krylov--Safonov; Harnack inequality; Hölder regularity}
\subjclass[2020]{
45K05, 
35K55, 
35D40, 
35B65
}

\maketitle


\section{Introduction}

In this work, we are interested in establishing results that demonstrate Hölder regularity, up to an appropriate scale, for functions, say $u\colon\Rd\times(-\infty,0]\to\R$, that solve a parabolic integro-differential equation in non-divergence form.  In the simplest setting,  these equations have a structure like
\begin{align}\label{eqIntro:SimpleParabolicEq}
	\partial_t u(x,t)-Lu(x,t)=f(x,t) \quad \text{in}\enspace \Om\times I,
\end{align}
where $L$ is an integro-differential operator of the form, for $K(\cdot,\cdot,t)\colon\real^d\times\real^d\to [0,\infty)$,
\begin{align}\label{eqIntro:Operator}
Lu(x,t)=\text{p.v.}\intd(u(x+y,t)-u(x,t))K(x,y,t)\d y,
\end{align} 
$I$ is some time interval, $\Om$ is some open subset of $\real^d$, and $u$ is a bounded function that satisfies this equation in an appropriate (possibly non-classical) sense.  Our main goal is to track how the behavior of $K(x,y,t)$ for $|y|\to 0$ imparts partial Hölder regularity on a solution, up to an appropriate scale, where for some parameter, $\rho$, related to $K$,
\begin{align*}
d((x,t),(y,\tau))\geq\rho,\enspace\text{but then}\enspace\abs{u(x,t)-u(y,\tau)}\leq Cd((x,t),(y,\tau))^\al,
\end{align*}
with a constant that does not depend on $\rho$ and $d$ being the parabolic distance we introduce below.   

For the sake of accessible presentation, we state a special case of the main result of our paper as it pertains to a simple setting for linear equations with easily stated assumptions as Theorem \ref{thmIntro:InformalMainResult}, below.  However, the results we prove apply to fully nonlinear versions of \eqref{eqIntro:SimpleParabolicEq}, and the precise versions of our main results appear in Sections \ref{sec:Harnack} and \ref{sec:Hoelder}, below. The key point is that even when $K(x,\cdot,t)\in L^1(\real^d)$, if the values of $K(x,y,t)$ are large enough for $|y|\leq \rho$, then solutions will gain some regularity up to scale $\rho$.  When the kernels, $K$, are allowed to have a singularity for $|y|\to 0$, this is a very well studied family of results, which can fairly be called Krylov--Safonov theory for integro-differential equations.  There is a large collection of works for these Hölder continuity results for solutions of equations like \eqref{eqIntro:SimpleParabolicEq}, and we mention some in Section \ref{subsec:Background}.

A famous and canonical example of an integro-differential operator is the fractional Laplacian $(-\Delta)^s$, formally given by
\begin{align}\label{eqIntro:FracLaplace}
(-\Delta)^su(x)=\text{p.v.}\intd\frac{u(x)-u(x+y)}{|y|^{d+2s}}\d y\coloneqq\lim_{\varepsilon\to 0^+}\int_{\Rd\setminus B_\varepsilon}\frac{u(x)-u(x+y)}{|y|^{d+2s}}\d y, \quad x\in\Rd,
\end{align}
for $s\in(0,1)$, where $\text{p.v.}$ denotes the Cauchy principal value of the integral. In this integral, we integrate the difference of $u$ at $x$ and any other point in space with respect to some weight, given by the kernel
\begin{align}\label{eqIntro:FracLaplaceKernel}
K(y)=\frac{1}{|y|^{d+2s}}, \quad y\in\Rd\setminus\{0\}.
\end{align}
This kernel has a singularity at the origin, which causes regularizing effects of the operator $(-\Delta)^s$. For that reason, operators like \eqref{eqIntro:FracLaplace} are called integro-differential operators of order $2s$. In this work, we will also consider integrable kernels that do not need to have such singularities. A prime example one should keep in mind is the truncated version of $(-\Delta)^s$, it has the kernel
\begin{align}\label{eqIntro:KRho}
K_\rho(y)=\min\left\{\frac{1}{|y|^{d+2s}},\frac{1}{\rho^{d+2s}}\right\}, \quad y\in\Rd
\end{align}
for some given $\rho>0$. Outside a ball of radius $\rho$ this kernel coincides with \eqref{eqIntro:FracLaplaceKernel}, whereas it is constant inside this ball. It is clear that one would not expect that the full regularization properties remain for those operators. However, we will provide partial Hölder estimates for viscosity solutions to parabolic equations involving such operators, and moreover, they will be robust in the parameter $\rho$.  Precise assumptions about the kernels, $K$, the general version of \eqref{eqIntro:SimpleParabolicEq}, and viscosity solutions are given in Section \ref{subsec:Pre}.

\begin{theorem}[Simplest Setting]\label{thmIntro:InformalMainResult}
Let $\rho>0$ and assume that $K$ satisfies, for some $0<\lam<\Lam$,
\begin{align}\label{eqIntro:EllipticityPointwise}
	\lam \min\left\{ \abs{y}^{-d-2s}, \rho^{-d-2s}   \right\} 
	\leq K(x,y,t)
	\leq \Lam\abs{y}^{-d-2s},
\end{align}
with also $K(x,y,t)=K(x,-y,t)$ if $s=1/2$. Let $R>\rho$ and $f\in L^\infty(B_R\times I_R)$, where $I_R = (-R^{2s},0]$. Assume that $u\in L^\infty(\Rd\times I_R)$, $Lu$ is as in \eqref{eqIntro:Operator}, and $u$ is a viscosity solution of
\begin{align*}
\partial_tu-Lu=f \enspace\text{in}\enspace B_R\times I_R.
\end{align*}
Then there exist universal parameters $\alpha\in(0,2s)$ and $C>0$ such that
\begin{align*}
\sup_{\substack{(x,t),(y,\tau)\in B_{R/2}\times I_{R/2}\\d((x,t),(y,\tau))>\rho}}\frac{|u(x,t)-u(y,\tau)|}{d((x,t),(y,\tau))^\alpha}\leq\frac{C}{R^\alpha}\left(\|u\|_{L^\infty(\Rd\times I_R)}+R^{2s}\|f\|_{L^\infty(B_R\times I_R)}\right).
\end{align*}
\end{theorem}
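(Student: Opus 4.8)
The plan is to follow the classical Krylov--Safonov blueprint adapted to the nonlocal parabolic setting, with the twist that all estimates must be tracked carefully in the truncation parameter $\rho$ so as to remain robust as $\rho\to 0$ (recovering the standard results) and, conversely, to only assert regularity at scales above $\rho$. First I would reduce, by the scaling $(x,t)\mapsto(Rx,R^{2s}t)$ and by dividing $u$ by the quantity in parentheses on the right-hand side, to the normalized situation $R=1$, $\|u\|_{L^\infty(\Rd\times I_1)}\le 1$, $\|f\|_{L^\infty(B_1\times I_1)}\le 1$, and $\rho\in(0,1)$; note this scaling preserves the structural assumption \eqref{eqIntro:EllipticityPointwise} with the same $\lambda,\Lambda$ (the truncation scale rescales to $\rho/R$), which is precisely why the final constant is $\rho$-independent. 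The heart of the argument is then a diminish-of-oscillation statement: there is a universal $\theta\in(0,1)$ such that if $u$ solves the equation in a unit parabolic cylinder, $\osc u\le 1$ there, and the cylinder has ``radius'' at least $\rho$, then the oscillation on the half-cylinder is at most $1-\theta$. Iterating this at dyadic scales $2^{-k}$ \emph{down to the scale $\rho$ but not below} yields the stated Hölder seminorm bound for pairs of points at parabolic distance $>\rho$; the iteration simply stops once $2^{-k}\sim\rho$, which is the mechanism by which the truncation caps the regularity.

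The diminish-of-oscillation step is obtained from a measure-theoretic ``$L^\varepsilon$'' / point-estimate lemma together with a covering (Calder\'on--Zygmund cube decomposition) argument, exactly as in Caffarelli--Silvestre and its parabolic successors, but here I would invoke the parabolic Harnack inequality or the weak Harnack / growth lemma proved in Section~\ref{sec:Harnack} of the paper as the engine. Concretely: assuming $0\le u\le 1$ on a large cylinder, $u$ a supersolution, and $u\ge \tfrac12$ on a fixed fraction of a subcylinder, the weak Harnack estimate forces $u$ to be bounded below by a universal constant on a slightly smaller cylinder; applying this to whichever of $u$ or $1-u$ occupies at least half of the relevant slice gives the oscillation decay. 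The tail term $\intd (u(x+y,t)-u(x,t))K(x,y,t)\,\d y$ restricted to $|y|$ large must be controlled using $\|u\|_{L^\infty(\Rd)}\le 1$ and the integrability $K(x,\cdot,t)\in L^1$ away from the origin coming from the upper bound $\Lambda|y|^{-d-2s}$; this contributes a bounded forcing that can be absorbed into $f$. One subtlety: when localizing to a small cylinder of radius $r\in(\rho,1)$, the inner part $|y|\le r$ of the operator still sees the genuine singularity $|y|^{-d-2s}$ (since $r>\rho$), so the nondegeneracy lower bound in \eqref{eqIntro:EllipticityPointwise} is the full fractional one at that scale, and the Harnack machinery applies with $\rho$-independent constants; it is only at scales below $\rho$ that the kernel degenerates, which is exactly where we stop.

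I expect the main obstacle to be the careful bookkeeping needed to make every constant in the oscillation-decay iteration independent of $\rho$, while simultaneously ensuring the iteration terminates cleanly at scale $\rho$. In particular one must check that the rescaled operators $L_r$ obtained at each dyadic scale $r>\rho$ still satisfy the hypotheses of the Section~\ref{sec:Harnack} results with the \emph{same} ellipticity constants $\lambda,\Lambda$ — this requires that $K_\rho$ rescales to a kernel truncated at $\rho/r<1$, which remains admissible — and that the contribution of the truncated (constant) part of the kernel at the bottom scale does not spoil the estimate. A secondary technical point is handling the $s=1/2$ borderline case, where the symmetry assumption $K(x,y,t)=K(x,-y,t)$ is needed to make the principal value integral in \eqref{eqIntro:Operator} well defined and to close the energy/viscosity estimates; this requires only minor care. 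The passage from the discrete dyadic oscillation decay to the continuous Hölder seminorm over all pairs with $d((x,t),(y,\tau))>\rho$ is then a standard argument: for such a pair, choose the smallest dyadic cylinder (of radius comparable to $d$, hence $>\rho$) containing both points, apply the oscillation bound there, and sum the geometric series — routine, but it should be spelled out to confirm the exponent $\alpha\in(0,2s)$ and the constant $C$ emerge as claimed.
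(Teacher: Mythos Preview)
Your plan is correct and follows the same architecture as the paper: pass from the linear equation to the extremal inequalities for $\M_\rho^\pm$, invoke the weak Harnack inequality of Section~\ref{sec:Harnack}, and run an oscillation-decay iteration that terminates at the first dyadic scale below $\rho$ (this is exactly the proof of Theorem~\ref{thmHoelder:PartialHoelderAt0}, followed by the translation argument of Corollary~\ref{corHoelder:CorollaryHoelder}).

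The one genuine methodological difference is your initial normalization by the parabolic scaling $(x,t)\mapsto(Rx,R^{2s}t)$. You correctly observe that this sends a kernel satisfying \eqref{eqIntro:EllipticityPointwise} with truncation $\rho$ to one with the same $\lambda,\Lambda$ and truncation $\rho/R$, so the \emph{family} $\{\K_\rho\}_{\rho>0}$ is scale-covariant even though no single $\M_\rho^\pm$ is scale-invariant. The paper deliberately refuses to rescale and instead proves the weak Harnack inequality and the iteration step directly at every scale $R>\rho$; this is precisely the purpose of the auxiliary results Lemma~\ref{lemmaHarnack:BumpFunctionOperatorBound}, Proposition~\ref{propHarnack:HölderPucci} and Corollary~\ref{corHarnack:HölderPucci}, which replace the unit-scale computations of \cite{Sil14} and track the $R$-dependence of $\M_\rho^\pm$ acting on bump functions. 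The two routes are equivalent (your rescaling at dyadic step $r$ amounts to applying Theorem~\ref{thmHarnack:WeakHarnackNoDrift} at $R=1$ with truncation parameter $\rho/r\in(0,1)$, and the paper's constant there is already uniform in that parameter), but the paper's direct-at-scale presentation makes the independence of the constants on both $R$ and $\rho$ visible without juggling a moving truncation. Your reference to a Calder\'on--Zygmund covering is unnecessary here: the Silvestre-style weak Harnack of Section~\ref{sec:Harnack} already yields the oscillation decrement in one shot, and the paper's iteration uses only that together with the tail control of the renormalized function at each step (the estimate \eqref{eqHoelder:TailBound} and what follows), which is the precise version of the tail bookkeeping you allude to.
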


The general version of Theorem \ref{thmIntro:InformalMainResult} appears as Theorem \ref{thmHoelder:PartialHoelderAt0} and Corollary \ref{corHoelder:CorollaryHoelder}, which is an application of the weak Harnack inequality of Section \ref{sec:Harnack}. Theorem \ref{thmHoelder:PartialHoelderAt0} applies to operators for which the corresponding kernel need not be symmetric in $y$, allows for drift terms if $s\geq 1/2$, and applies to a class of fully nonlinear equations related to \eqref{eqIntro:EllipticityPointwise}.

The phrase \emph{universal} means, in particular, that $\alpha$ and $C$ do not depend on $\rho$, such that the estimate is robust as $\rho\to 0^+$, and our result recovers the one in \cite{Sil14}. We include proofs that apply directly to both the truncated setting and the original theory, when no truncation is present in the lower bound on $K$. To be precise, such an assumption can be written for $y\not=0$ as
\begin{align*} 
	K(x,y,t)\geq \lam\min\{ \abs{y}^{-d-2s}, \rho^{-d-2s} \}\enspace\text{if}\ \rho>0,\enspace\text{and}\enspace K(x,y,t)\geq \lam\abs{y}^{-d-2s} \enspace\text{if}\ \rho=0.
\end{align*}
It will appear again with the other assumptions in Section \ref{subsec:Pre}.

We want to immediately address the two most pertinent remarks about our theorems (which appear in Sections \ref{sec:Harnack} and \ref{sec:Hoelder}) with regards to previous work.

\begin{remark}
	Theorems \ref{thmIntro:InformalMainResult}, \ref{thmHarnack:WeakHarnackDrift}, \ref{thmHarnack:WeakHarnackNoDrift}, \ref{thmHoelder:PartialHoelderAt0} are all modeled after those that have appeared in \cite{Sil14}, which corresponds to $\rho=0$.
	The fact that our estimates are robust as $\rho\to0^+$ (and in fact allow for $\rho=0$) means that our results generalize those of \cite{Sil14} to the case when $K$ may be integrable. We capture the same results presented in \cite{Sil14} by setting $\rho=0$ here and in the main results, below.  There is a technical issue which is required to be overcome in this generalization, and it has to do with the fact that when $\rho=0$, the relevant extremal equations are scale invariant, whereas when $\rho>0$, they are not.
\end{remark}

\begin{remark}
	There is a precedent for such a partial Hölder regularity result for integrable kernels in \cite{FelmerDosPrazeresTopp-2018InteriorRegZeroOrderOpsFracLaplace-IsrealJournMath}.  In that work, they study (effectively) the corresponding Poisson equation for the kernel, $K_\rho$ from \eqref{eqIntro:KRho}, directly.  They obtain the same regularity result as presented in Theorem \ref{thmIntro:InformalMainResult}, above, for that kernel.  Thus, our results could be considered a generalization of \cite{FelmerDosPrazeresTopp-2018InteriorRegZeroOrderOpsFracLaplace-IsrealJournMath} to the setting that includes fully nonlinear and parabolic equations.
\end{remark}

\begin{remark}
	We are uncertain if a result like Theorem \ref{thmIntro:InformalMainResult} would hold for weak solutions of divergence form parabolic equations.  All of the methods we know for such situations are modeled on nonlocal adaptations of either De Giorgi's or Moser's techniques.  Both of those rely heavily on Sobolev and Poincar\'e inequalities (or similar variants), and it is not clear that there is an appropriate modification of those results to a situation that involves an integrable interaction kernel in the relevant energy space.  We note that in bounded domains, the spaces $L^2(\Omega)$ and
\begin{align*}
H_\rho^s(\Omega)\coloneqq\left\{u\in L^2(\Omega) \ \Big\vert\  \int_\Omega\int_\Omega\frac{(u(x)-u(y))^2}{\max\{|x-y|^{d+2s},\rho^{d+2s}\}}\d x\d y<\infty\right\}
\end{align*}
coincide. Hence, an arbitrary $H_\rho^s$-function cannot be expected to lie in some $L^{2+\varepsilon}$-space leading to a lack of embedding. Thus, if some result is used, it must be a modification of the typical embedding to account for the fact that although the interaction kernel is integrable, it has a very large mass near the diagonal. We do not know of any such result.
\end{remark}

\begin{remark}
The results in this work are not robust as $s\to 1^-$, which means they do not recover certain results for second-order parabolic equations in non-divergence form. In fact, the weak Harnack inequality we will present, in general fails for those operators and relies on the integral structure of the operators, like those in \eqref{eqIntro:Operator}. However, it would be interesting to adapt our approach to a robust setting as in \cite{CaSi-09RegularityIntegroDiff} or \cite{SS16} to investigate the local equivalent of truncation of integral kernels.  (We note that the results in \cite{CaSi-09RegularityIntegroDiff} and \cite{SS16}, among \emph{many} others contain the previously known results for local second order equations.)
\end{remark}

\subsection*{Strategy of the proof}
The strategy of the proof follows the approach of \cite{Sil14}, which exploits the fact that the nonlocality of the equation can be used to more directly obtain a relationship between the pointwise values of a solution and its integral, in contrast to the usual approach for second order equations. More precisely, we will prove a weak Harnack inequality for supersolutions, which allows us to provide a uniform lower bound for solutions on different scales. This will lead to a partial Hölder regularity in accordance with the Krylov--Safonov theory in the local case, see \cite{KS81}. The main difficulty in our setting is the absence of scaling invariance of our kernels. Hence, we need to deduce the weak Harnack inequality not only on unit scale, but on any sufficiently large scale. This has to be done carefully in order to get estimates independent of the size of the domain and independent of the truncation parameter. For that, we need additional auxiliary results to study the properties of the operators, which were not needed in \cite{Sil14}.

\subsection*{Organization of the paper}
The rest of this work is organized as follows. In Section \ref{subsec:Background} we list and discuss some earlier and related results. In Section \ref{subsec:Pre} we agree on some notations and introduce the admissible operators and equations we study in this framework. Moreover, we introduce the concept of viscosity solutions. Section \ref{sec:Harnack} will be the main part, in which we prove a weak Harnack inequality for supersolutions. Afterwards, in Section \ref{sec:Hoelder} we iterate the weak Harnack inequality to obtain partial Hölder regularity. Furthermore, we obtain a proof for Liouville's theorem. In Section \ref{subsec:Approximation} we discuss applications of our results, in particular, how one can approximate solutions to integro-differential equations by solutions obtained by integrable kernels. In the end, we discuss the special case of an elliptic equation in Section \ref{subsec:Elliptic}, where solutions can be defined pointwise.


\subsection{Background and related results}\label{subsec:Background}

There are now many works that address Krylov--Safonov results for linear and fully nonlinear equations in \emph{non-divergence form}, similar to \eqref{eqIntro:SimpleParabolicEq}.  We point out that the framework and methods for this family of results is distinctly different that those corresponding to the divergence form equations, for results like De Giorgi--Nash--Moser.  A list of current results for divergence equations appears in the recent work \cite{KassmannWeidner-2024ParabolicHarnack-Duke}.  Here, we focus on those for non-divergence equations.  For the case of $\rho=0$, the collection of works is quite extensive, and for the case of $\rho>0$, to the best of our knowledge, there are far fewer results.

We want to point out that the type of results related to Theorem \ref{thmIntro:InformalMainResult} are important for many reasons, but two noteworthy features are: they give a uniform Hölder modulus for solutions depending on very rough features of the equation and the solution; and they offer a universal \emph{gain of regularity}, i.e.\ solutions are typically only a priori continuous, and the Krylov--Safonov result says they indeed have a bit more regularity than expected.  This is, of course, well known, classical, and widely used for the case when $\rho=0$, but it is more interesting for $\rho>0$.

\subsubsection*{Results for kernels with a singularity of size $\abs{y}^{-d-2s}$}
When $\rho=0$, with a properly chosen normalization constant, second order parabolic equations are a special limiting case of equations like \eqref{eqIntro:SimpleParabolicEq}, and thus for the non-divergence setting, results like Theorem \ref{thmIntro:InformalMainResult} are reasonably classified as Krylov--Safonov results.  We importantly note, that this work does not include the correct normalization constant (multiplying $K$ by $(1-s)$) to achieve the local theory as a sub-case.  

The program of extending the Hölder regularity to non-divergence integro-differential equations started with elliptic equations, and goes back at least to \cite{BaLe-2002Harnack} and \cite{BaKa-05Holder}, which took the probabilistic approach to the theory (as in the original by Krylov and Safonov \cite{KS81}).  This was followed by \cite{Silv2006-HolderIntDiff-IUMJ} with an analytic approach.  The first result that is robust as $s\to1^-$, i.e.\ captures the second order theory, is \cite{CaSi-09RegularityIntegroDiff}, which was also for elliptic equations. This was followed by quite a few robust results for both elliptic and parabolic equations, extending the family of admissible kernels, including: \cite{Chan-2012NonlocalDriftArxiv}, \cite{ChangLaraDavila-2016HolderNonlocalParabolicDriftJDE}, \cite{GuSc-12ABParma}, \cite{KassRangSchwa-2014RegularityDirectionalINDIANA}, \cite{SS16}.  A noteworthy modification of the previous results is to consider the behavior not as $s\to 1^-$ but rather as $s\to 0^+$, and \cite{KassmannMimica-2013IntrinsicScalingJEMS} investigates this where the singularity of the kernel can be very weak, but still non-integrable. It turns out that for results that are not robust, some interesting features of the equation can be exploited to obtain a remarkably straightforward version of the weak Harnack inequality for supersolutions (as in Theorem \ref{thmHarnack:WeakHarnackDrift}, below).  This seems to have originated in \cite{Sil14}, and as we mentioned earlier, this is the result on which we have modeled our approach.

\subsubsection*{Results for integrable kernels}
There are many ways one may arrive at an equation like \eqref{eqIntro:SimpleParabolicEq} with an integrable interaction kernel. If the starting point is the analysis of generators of Markov processes, then the results of Courr\`ege \cite{Courrege-1965formePrincipeMaximum} (or the L\'evy--Khinchin formula for the translation invariant setting) indicate that any kernel, $K$, that satisfies
\begin{align*}
	\sup_{x,t}\int_{\real^d} \min\{\abs{y}^2,1\} K(x,y,t)dy <\infty,
\end{align*}
would arise naturally, that is to say, both integrable and non-integrable kernels can appear, as long as the singularity can be integrated against $\abs{y}^2$ near $y=0$. A couple of works using integrable kernels in the probabilistic setting are \cite{Cha07}, \cite{ChasseigneChavesRossi-2006AsymptoticNonlocalDiffusion-JMPA}. However, a completely different origin of \eqref{eqIntro:SimpleParabolicEq} is the study of continuum limits for interacting particles systems and phase transitions, and some representative works are \cite{BatesChmaj-1999IntDiffModelPhaseTransitions-JStatPhys}, \cite{BaFiReXi-97}.  Of course, another natural reason to consider integrable kernels for \eqref{eqIntro:SimpleParabolicEq} is to give an approximation of equations for $\rho=0$ by those for which the interaction kernel is more tame.   Many of the results for integrable interaction kernels refer to the corresponding equations as nonlocal diffusion equations. Existence, uniqueness and asymptotic behavior for various problems are studied in \cite{AMRT10}.  Regarding regularity results, there is a series of works for integrable kernels in \cite{FelmerTopp-2013ConvergenceZeroOrderToNonlinearFracHeat-IsraelJournMath}, \cite{FelmerTopp-2015UnifEquicontinuityZeroOrderEqApproxFracLaplace-CPDE}, \cite{FelmerDosPrazeresTopp-2018InteriorRegZeroOrderOpsFracLaplace-IsrealJournMath}. Among other interesting features, \cite{FelmerTopp-2013ConvergenceZeroOrderToNonlinearFracHeat-IsraelJournMath} and \cite{FelmerTopp-2015UnifEquicontinuityZeroOrderEqApproxFracLaplace-CPDE} demonstrate how regularity of solutions to equations with integrable kernels can be inherited from either the initial data of a Cauchy problem or the right hand side of the equation. They demonstrate partial regularity for solutions, but not a gain of regularity as in the Krylov--Safonov type results. The result in \cite{FelmerDosPrazeresTopp-2018InteriorRegZeroOrderOpsFracLaplace-IsrealJournMath} does demonstrate a gain in regularity, and they obtain a partial regularity result, like ours, for an integrable version of the linear and elliptic Poisson equation, corresponding to $K_\rho$, defined in \eqref{eqIntro:KRho}.


\subsection{Preliminaries}\label{subsec:Pre}

Here we give more precise details and formulation for what kind of operators can appear in \eqref{eqIntro:Operator}, and what it means for functions to solve equations like \eqref{eqIntro:SimpleParabolicEq}.  We also fix some notation for our presentation.

\subsubsection{Sets and functions in $\real^d\times(-\infty,0]$}
 For $R>0$ and $x\in\Rd$, we will denote
\begin{align*}
B_R(x)\coloneqq\{y\in\Rd\mid|x-y|<R\},\quad B_R\coloneqq B_R(0) \and I_R\coloneqq(-R^{2s},0],
\end{align*}
where the dimension $d\geq 1$ and the parameter $s\in(0,1)$ are fixed during this whole article. Furthermore, let us introduce the parabolic cylinders
\begin{align*}
Q_R\coloneqq B_R\times I_R.
\end{align*}
Note that these sets are open balls in $\Rd\times(-\infty,0]$ induced by the parabolic distance function
\begin{align*}
d((x,t),(y,\tau))\coloneqq\max\left\{|x-y|,|t-\tau|^\frac{1}{2s}\right\}.
\end{align*}
Finally, for a real number $a\in\R$, we denote the positive and negative part of $a$ by $a_+\coloneqq\max\{a,0\}$ and $a_-\coloneqq(-a)_+$, respectively.

\subsubsection{Assumptions on the operators}

We will treat both linear and nonlinear operators that are modeled on \eqref{eqIntro:Operator}.  For now, we will call these operators $F$.

For each fixed $t$, we think of $F$ as an operator for functions $u(\cdot,t)$ in the space variable, 
and the most basic requirement is that
\begin{align*}
	F\colon C_\loc^{1,1}(\Rd)\cap L^\infty(\Rd)\to C(\Rd),
\end{align*}
whereby we would say $F(u)(\cdot,t)\in C(\real^d)$ and the value at $x$ is $F(u)(x,t)\in\real$. In fact, the domain of definition of $F$ can be relaxed considerably, when the particular value of $s$ is taken into consideration, but for now, $C^{1,1}$ will suffice.  As we explain next, we take $F$ to have a more specific formulation in terms of integration kernels, $K(x,y,t)$ as above, and the $t$-dependence should become more clear.

The precise assumptions on the interaction kernels are stated as follows. Note that we cover the known case where $\rho=0$ and the case where $\rho>0$ simultaneously. For the sake of readability, we use the convention that, if $\rho=0$, then $\rho^{-d-2s}=\infty$, i.e.
\begin{align*}
\min\{|y|^{-d-2s},\rho^{-d-2s}\}=|y|^{-d-2s},
\end{align*}
throughout this work. We fix some parameters $\lambda,\Lambda>0$ and $\rho\geq 0$, and we denote by $\K_\rho$ the collection of all kernels $K\colon\Rd\to[0,\infty)$ that satisfy
\begin{align}\label{eqPre:LowerEllipticityAssumption}
K(y)\geq\lambda\min\left\{|y|^{-d-2s},\rho^{-d-2s}\right\} \quad\text{for all}\ y\in\Rd\setminus\{0\}
\end{align}
and
\begin{align}\label{eqPre:UpperEllipticityAssumption}
\int_{B_{2r}\setminus B_r}K(y)\d y\leq\Lambda r^{-2s} \quad\text{for all}\ r>0.
\end{align}
In the case $s=1/2$, we additionally assume that
\begin{align}\label{eqPre:SymmetryAssumption}
\int_{B_{2r}\setminus B_r}yK(y)\d y=0 \quad\text{for all}\ r>0.
\end{align}
Let us emphasize that, for $\rho=0$ with the above convention, \eqref{eqPre:LowerEllipticityAssumption} becomes the usual ellipticity condition
\begin{align*}
K(y)\geq\frac{\lambda}{|y|^{d+2s}} \quad\text{for all}\ y\in\Rd\setminus\{0\}.
\end{align*}
The assumptions \eqref{eqPre:UpperEllipticityAssumption} and \eqref{eqPre:SymmetryAssumption} coincide with those of \cite{SS16}, but here, for simplicity, we take a global lower bound on $K(y)$ in \eqref{eqPre:LowerEllipticityAssumption}, which is much less general than the corresponding assumption in \cite{SS16}. Note that symmetric kernels, $K(y)=K(-y)$, satisfy \eqref{eqPre:SymmetryAssumption}. However, \eqref{eqPre:SymmetryAssumption} is more general and only necessary for the critical case $s=1/2$.

For $y\in\Rd$, we define the difference operator $\delta_y$ by
\begin{align}\label{eqPre:DefinitionDifferences}
\delta_yu(x)=\begin{cases}
	u(x+y)-u(x),\ &s<1/2, \\	
	u(x+y)-u(x)-y\cdot\nabla u(x)\Indicator_{B_1}(y),\ &s=1/2, \\
	u(x+y)-u(x)-y\cdot\nabla u(x),\ &s>1/2,
\end{cases}
\end{align}
and the integro-differential operators
\begin{align}\label{eqPre:Operator}
L_Ku(x)=  \intd\delta_yu(x)K(y)\d y,\quad x\in\Rd,
\end{align}
for $K\in\K_\rho$.  We note that $L_K$ can be defined classically on any function that locally has enough regularity to allow $\delta_yu$ to have enough decay to balance any possible singularity in $K$.  However, we do not need the most general setting for the domain of definition of $L_K$, and we refer to Lemma \ref{lemmaPre:PointwiseDefined}, below, for a situation which is good enough for our needs. At this point, note that for $\rho>0$, the class of admissible kernels $K\in\K_\rho$ contain many that are integrable, and in this case one can understand $L_Ku$ even if $u\in L^\infty(\Rd)$ only. For this subclass of integrable kernels, one could define $\delta_y u(x)=u(x+y)-u(x)$ and $L_Ku$ will be well defined. However, in order to get robust results for $\rho\to 0^+$, it is unavoidable to define $\delta_yu$ as in \eqref{eqPre:DefinitionDifferences}.

For some given function $f\colon Q_R\to\R$, we are interested in regularity properties of solutions $u$ to the equation
\begin{align}\label{eqPre:Equation}
\partial_tu-F(u)=f,
\end{align}
where $\partial_tu$ denotes the left-derivative of $u$ in time and $F$ is a fully nonlinear operator with a drift, which is elliptic with respect to $\K_\rho$. The simplest examples one should have in mind are the linear operators
\begin{align}\label{eqPre:LinearWithDrift}
L_{K,b}u=b\cdot\nabla u+L_Ku
\end{align}
with $L_K$ as in \eqref{eqPre:Operator} and a drift $b\colon Q_R\to\R^d$ satisfying $\|b\|_{L^\infty(Q_R)}\leq\Lambda$ or $b\equiv0$ when $s<1/2$. More specifically, $F$ should satisfy
\begin{equation}\label{eqPre:FullyNonlinear}
\begin{aligned}
\text{for}\ s\geq \frac{1}{2},\quad &\M_\rho^-(u-v)-\Lambda|\nabla(u-v)|\leq F(u)-F(v)\leq\M_\rho^+(u-v)+\Lambda|\nabla(u-v)| \\
\text{and for}\ s<\frac{1}{2},\quad &\M_\rho^-(u-v)\leq F(u)-F(v)\leq\M_\rho^+(u-v),
\end{aligned}
\end{equation}
for any $u,v\in C_\loc^{1,1}(\Rd)\cap L^\infty(\Rd)$, where $\M_\rho^\pm$ denote the Pucci extremal operators with respect to $\K_\rho$, that are,
\begin{align}\label{eqPre:DefinitionPucci}
\M_\rho^-u(x)\coloneqq\inf_{K\in\K_\rho}L_Ku(x) \quad\text{and}\quad \M_\rho^+u(x)\coloneqq\sup_{K\in\K_\rho}L_Ku(x)
\end{align}
whenever $L_Ku(x)$ is defined in \eqref{eqPre:Operator}. We note that the cases in \eqref{eqPre:FullyNonlinear} are to emphasize that when $s<1/2$, we do not allow for a drift in our equations. Canonical examples of nonlocal fully nonlinear operators are given by Isaac operators
\begin{align}\label{eqPre:Isaac}
F(u)(x,t)=\inf_\alpha\sup_\beta L_{K^{\alpha,\beta},b^{\alpha,\beta}}u(x,t),
\end{align}
where $K^{\alpha,\beta}(x,\cdot,t)\in\K_\rho$ and $\|b^{\alpha,\beta}\|_{L^\infty(Q_R)}\leq\Lambda$ for any $x,t,\alpha$ and $\beta$ (see \cite{GuSc-2019MinMaxNonlocalCALCVARPDE}, \cite{GuSc-2019MinMaxEuclideanNATMA}). It is natural to assume that $F(0)=0$, as it is the case for \eqref{eqPre:Operator}, \eqref{eqPre:LinearWithDrift} and \eqref{eqPre:Isaac}. So, \emph{excluding any drift terms}, $F(u)$, by construction, has the same domain of definition as $\M_\rho^\pm$. Let us mention that we only allow for a drift if $s\geq 1/2$ because otherwise, the drift is the dominating term in the equation, since it has order $1$. As we are investigating the regularizing properties of the nonlocal operator in space, which is of order $2s$, we avoid this case. It turns out that the case $s=1/2$ can be treated with drift.

Some further comments about the drift for $s\geq 1/2$ may be useful. Since we are considering non-symmetric kernels, there can be a drift term that is incorporated into $K$ itself.  One way to see this is to decompose any kernel into an even and odd part, i.e.\ $K=K_e+K_o$, and, in fact, $L_{K_0}$ becomes a drift operator. Conversely, given any linear operator $L_{K,b}$ as in \eqref{eqPre:LinearWithDrift}, one can change $K$ in a suitable way to obtain the same operator, but (technically) without any drift.  One reason this causes an issue is that a standard way of representing operators like those here is to canonically use $\delta_yu(x) = u(x+y)-u(x)-y\cdot\nabla u(x)\Indicator_{B_1}(y)$, regardless of the value of $s$.  Then when implementing the natural rescaling of the operator, drift terms appear to correct for the change to $y\cdot\nabla u(x)\Indicator_{B_1}(y)$.  For this reason, one may link drift and kernel as in \cite[Definition 2.3]{ChangLaraDavila-2016HolderNonlocalParabolicDriftJDE} to define the admissible class of operators.  However, this technical issue does not appear in our work, thanks to either for $s>1/2$, the definition of $\delta_y u$ which does not truncate the gradient term, or for $s=1/2$, the cancellation/symmetry assumption in \eqref{eqPre:SymmetryAssumption}. The truncation of the gradient term in \eqref{eqPre:DefinitionDifferences} cannot be avoided if $s=1/2$. One can see this come into play when the drift term is removed, as in Theorem \ref{thmHarnack:WeakHarnackNoDrift}, and also the property \eqref{eqPre:AlternativeRepresentationOperator}, below.

\subsubsection{Some properties of $K$, $L_K$, and $\M_\rho^\pm$}

Let us collect some properties of the operators that follow from the taken assumptions on the kernels. The first lemma is standard and will be presented without proof.  In fact, it only uses properties \eqref{eqPre:UpperEllipticityAssumption} and \eqref{eqPre:SymmetryAssumption} for the class $\K_\rho$.

\begin{lemma}\label{lemmaPre:PointwiseDefined}
Let $x\in\Rd$, $R>0$ and $K\in\K_\rho$, let $L_K$ be as in \eqref{eqPre:Operator} and $\M_\rho^\pm$ as in \eqref{eqPre:DefinitionPucci}. Then 
\begin{align*}
	L_Ku(x) = \int_{\real^d} \delta_y u(x)K(y)\d y
	\ \ \ \ \text{and}\ \ \ \ 
	\M_\rho^\pm u(x)
\end{align*}
are defined for any function $u\in C^{k,\alpha}(\overline{B}_R(x))\cap L^\infty(\Rd)$ such that $R>0$, $k\geq 0$, $\alpha\in(0,1]$ and $k+\alpha>2s$.
\end{lemma}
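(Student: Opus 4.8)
The plan is to show that the integral in \eqref{eqPre:Operator} converges absolutely by the standard splitting of $\Rd$ into a neighborhood of the origin, where the regularity of $u$ near $x$ controls $\delta_y u(x)$, and its complement, where the global boundedness of $u$ controls it. Both estimates will use only \eqref{eqPre:UpperEllipticityAssumption} (together with \eqref{eqPre:SymmetryAssumption} in the borderline case $s=1/2$), and the resulting bound will depend on $d,s,\Lambda,R$, on $\norm{u}_{L^\infty(\Rd)}$, and on the $C^{k,\al}$-norm of $u$ on $\overline B_R(x)$, but \emph{not} on the particular $K\in\K_\rho$; this uniformity then immediately gives finiteness of $\M_\rho^\pm u(x)=\inf/\sup_{K\in\K_\rho}L_K u(x)$.

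\emph{Tail part.} On $\Rd\setminus B_R$ one has $\abs{u(x+y)-u(x)}\le 2\norm{u}_{L^\infty(\Rd)}$, and when $s\ge 1/2$ the correction in \eqref{eqPre:DefinitionDifferences} contributes at most $\abs{y}\,\abs{\nabla u(x)}$ (which is meaningful since $k\ge 1$ is forced by $k+\al>2s\ge 1$), truncated to $B_1$ when $s=1/2$. Decomposing $\Rd\setminus B_R$ into the dyadic annuli $B_{2^{j+1}R}\setminus B_{2^{j}R}$, $j\ge 0$, and applying \eqref{eqPre:UpperEllipticityAssumption} on each, the constant part is bounded by $\sum_{j\ge 0}2\norm{u}_{L^\infty(\Rd)}\Lambda(2^{j}R)^{-2s}<\infty$ because $s>0$; for $s>1/2$ the linear part is bounded by $\sum_{j\ge 0}\abs{\nabla u(x)}\,2^{j+1}R\,\Lambda(2^{j}R)^{-2s}<\infty$ because $2s>1$; and for $s=1/2$ the linear correction is supported in $B_1$, so it involves only finitely many annuli with finite mass and is harmless.

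\emph{Singular part.} On $B_R$, a Taylor expansion of $u$ about $x$, valid since $u\in C^{k,\al}(\overline B_R(x))$, gives $\abs{\delta_y u(x)}\le C\abs{y}^{\beta}$ for $\abs{y}\le R$, where $\beta=\min\{k+\al,\,m+1\}$ and $m\in\{0,1\}$ is the order of the terms subtracted off in \eqref{eqPre:DefinitionDifferences} ($m=0$ if $s<1/2$, $m=1$ if $s\ge 1/2$); the hypothesis $k+\al>2s$ together with $m+1>2s$ (valid since $s<1/2$ when $m=0$, and since $s<1$ when $m=1$) ensures $\beta>2s$. Summing \eqref{eqPre:UpperEllipticityAssumption} over the shells $B_{2^{-j}R}\setminus B_{2^{-j-1}R}$, $j\ge 0$, the singular part is then bounded by the convergent geometric series $\sum_{j\ge 0}C(2^{-j}R)^{\beta}\Lambda(2^{-j}R)^{-2s}<\infty$. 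In the critical case $s=1/2$ one first invokes \eqref{eqPre:SymmetryAssumption} to see that $\int_{B_{2^{-j}R}\setminus B_{2^{-j-1}R}}y\,K(y)\,\d y=0$, so that on each shell $\delta_y u(x)$ may under the integral be replaced by $u(x+y)-u(x)-y\cdot\nabla u(x)$, returning us to the estimate with $m=1$; this is the only place the symmetry hypothesis is needed.

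\emph{Conclusion.} Adding the two contributions shows $L_K u(x)$ is well defined with a bound uniform over $K\in\K_\rho$, whence $\M_\rho^\pm u(x)$ are also finite by taking the infimum and supremum over $\K_\rho$. The only point requiring genuine care is the bookkeeping: matching the order of $\delta_y$ in \eqref{eqPre:DefinitionDifferences} to the available regularity in each of the three regimes $s<1/2$, $s=1/2$, $s>1/2$, and, in the borderline case, correctly applying \eqref{eqPre:SymmetryAssumption} shell by shell. There is no substantive analytic obstacle, which is why the lemma is stated without proof.
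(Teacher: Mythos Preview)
Your proposal is correct and is precisely the standard argument the paper has in mind: the lemma is explicitly stated without proof as ``standard'' and said to use only \eqref{eqPre:UpperEllipticityAssumption} and \eqref{eqPre:SymmetryAssumption}, and the dyadic shell estimates you invoke are exactly those later recorded in Lemma~\ref{lemmaPre:AssumptionFollowUp} of the appendix. There is nothing further to compare.
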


Note that this result is not sharp, just sufficient for classical evaluation.  For example, if $\rho=0$ and $K(y)=K(-y)$, then $L_K$ is well defined as soon as $u(x+y)+u(x-y)-2u(x)$ has an estimate of the form $\abs{y}^{2s}\om(\abs{y})$ where $\omega$ is a Dini-modulus.  But these issues are not pertinent to our results.

\subsubsection{Definition of Viscosity solutions and some properties}

The concept for solutions in this framework is the notion of viscosity solutions, which we will introduce now. Note that the definition of fully nonlinear operators in \eqref{eqPre:FullyNonlinear} includes $\M_\rho^\pm$, for example.

\begin{definition}\label{defPre:Viscosity}
Let $\Omega\subseteq\Rd$ be a domain, $I\subseteq(-\infty,0]$ be a left-open interval and $f\colon\Omega\times I\to\R$ be a given function. We say that a function $u\in L^\infty(\Rd\times I)$, which is upper semicontinuous in $\Omega\times I$, solves the equation
\begin{align*}
\partial_tu-F(u)\leq f \enspace\text{in}\enspace \Omega\times I
\end{align*}
in the viscosity sense if, for any $(x_0,t_0)\in\Omega\times I$, any $\varepsilon>0$ such that $B_\varepsilon(x_0)\times(t_0-\varepsilon,t_0]\subseteq\Omega\times I$ and any test function $\varphi\colon B_\varepsilon(x_0)\times(t_0-\varepsilon,t_0]\to\R$ that is left-differentiable in time, $C^{1,1}$ in space, and also satisfies $u(x_0,t_0)=\varphi(x_0,t_0)$ and $u\leq\varphi$ in $B_\varepsilon(x_0)\times(t_0-\varepsilon,t_0]$, the function
\begin{align}\label{eqPre:ViscosityDefinitionConstructionV}
v\coloneqq\begin{cases}
\varphi,\ &\text{in}\ B_\varepsilon(x_0)\times(t_0-\varepsilon,t_0], \\
u,\ &\text{elsewhere in}\ \R^d\times I
\end{cases}
\end{align}
satisfies
\begin{align*}
\partial_tv(x_0,t_0)-F(v)(x_0,t_0)\leq f(x_0,t_0).
\end{align*}
In the same way, we say that a function $u\in L^\infty(\Rd\times I)$ that is lower semicontinuous in $\Omega\times I$ solves
\begin{align*}
\partial_tu-F(u)\geq f \enspace\text{in}\enspace \Omega\times I
\end{align*}
if, for any $(x_0,t_0)\in\Omega\times I$ and any test function, $\varphi$ as above, but with $u\geq\varphi$ in $B_\varepsilon(x_0)\times(t_0-\varepsilon,t_0]$, the function $v$, constructed in \eqref{eqPre:ViscosityDefinitionConstructionV}, satisfies
\begin{align*}
\partial_tv(x_0,t_0)-F(v)(x_0,t_0)\geq f(x_0,t_0).
\end{align*}
\end{definition}

We recall that $F(v)(x_0,t_0)$ is always defined by Lemma \ref{lemmaPre:PointwiseDefined} because $v\in L^\infty(\Rd\times I)$ and in a neighborhood, $v=\varphi$ is $C^{1,1}$ with respect to space.

Note that any viscosity solution to \eqref{eqPre:Equation}, that is, both subsolution and supersolution, needs to be continuous a priori. Recall that for local differential equations, the values of the test functions only matter in a neighborhood of the evaluation point. However, due to the nonlocality, the behavior of the test function in the whole space would be important.  One point of view, which we take here, is since the purpose of evaluating with the test function is to be able to apply derivatives, we do not want to take into account values of the test function far away from the point of evaluation. In particular, for our definition, the ordering between test function and candidate solution does not need to be preserved everywhere. This is the reason to adjust the test function as done in \eqref{eqPre:ViscosityDefinitionConstructionV}. We note that there are equivalent notions of test functions that require a global ordering with the candidate solution instead of using the function in \eqref{eqPre:ViscosityDefinitionConstructionV}, which is explained in \cite[Section 1.2]{BarlesImbert-07SecondOrderIntDiffRevisit-AIHP}.   We will need the following lemma that gives more information when the test function itself preserves the ordering at any point in space. The proof follows the lines of \cite[Lemma 4.5]{Sil14} and can be found in Appendix \ref{secApp:Proofs}.

\begin{lemma}\label{lemmaPre:GlobalTestFunction}
Let $\Omega\subseteq\Rd$ be a domain, $I\subseteq(-\infty,0]$ be a left-open interval and $f\colon\Omega\times I\to\R$. Assume that $u$ satisfies
\begin{align*}
\partial_tu+\Lambda|\nabla u|-\M_\rho^-u\geq f \enspace\text{in}\enspace \Omega\times I
\end{align*}
in the viscosity sense. Let $\varphi$ be a test function for the point $(x_0,t_0)\in\Omega\times I$ as in Definition \ref{defPre:Viscosity} such that $u(\cdot,t_0)\geq\varphi(\cdot,t_0)$ in $\Rd$ and assume additionally that $\varphi(\cdot,t_0)\in L^\infty(\Rd)$. Then
\begin{multline}\label{eqPre:GlobalTestFunction}
\partial_t\varphi(x_0,t_0)+\Lambda|\nabla\varphi(x_0,t_0)|-\M_\rho^-\varphi(x_0,t_0) \\
-\intd(u(x_0+y,t_0)-\varphi(x_0+y,t_0))\min\left\{\frac{\lambda}{|y|^{d+2s}},\frac{\lambda}{\rho^{d+2s}}\right\}\d y\geq f(x_0,t_0).
\end{multline}
\end{lemma}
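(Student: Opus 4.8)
The plan is to start from the viscosity supersolution property applied to the adjusted test function $v$ from \eqref{eqPre:ViscosityDefinitionConstructionV}, and to compare $F(v)(x_0,t_0)$ with $F(\varphi)(x_0,t_0)$, where here $\varphi$ is extended globally (so it is the \emph{genuine} global $C^{1,1}$-in-space function, not the spliced one). Since $u$ satisfies $\partial_t u + \Lambda|\nabla u| - \M_\rho^- u \geq f$ in the viscosity sense and $u \geq \varphi$ in the small cylinder $B_\varepsilon(x_0)\times(t_0-\varepsilon,t_0]$ with equality at $(x_0,t_0)$, the definition gives
\begin{align*}
\partial_t v(x_0,t_0) + \Lambda|\nabla v(x_0,t_0)| - \M_\rho^- v(x_0,t_0) \geq f(x_0,t_0),
\end{align*}
and because $v = \varphi$ near $(x_0,t_0)$ in space–time we have $\partial_t v(x_0,t_0) = \partial_t\varphi(x_0,t_0)$ and $\nabla v(x_0,t_0) = \nabla\varphi(x_0,t_0)$. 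So the whole content is to relate $\M_\rho^- v(x_0,t_0)$ to $\M_\rho^-\varphi(x_0,t_0)$ plus a remainder controlled by the difference $u-\varphi$ far away.

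The key computation is as follows. Fix $t = t_0$. For any kernel $K\in\K_\rho$, write $L_K v(x_0) - L_K\varphi(x_0) = \intd (\delta_y v(x_0) - \delta_y\varphi(x_0))K(y)\,\d y$. Since $v$ and $\varphi$ agree on $B_\varepsilon(x_0)$ and have the same value, gradient (and the gradient-correction terms in $\delta_y$ cancel), the integrand is supported in $|y|\geq\varepsilon$ and equals $(v(x_0+y)-\varphi(x_0+y))K(y) = (u(x_0+y,t_0)-\varphi(x_0+y,t_0))K(y)$ there. For a supersolution we are taking $\M_\rho^- = \inf_{K\in\K_\rho} L_K$, and the point is that $u(\cdot,t_0) \geq \varphi(\cdot,t_0)$ globally (this is the extra hypothesis), so $v - \varphi \geq 0$ everywhere, hence $(v(x_0+y)-\varphi(x_0+y))K(y) \geq (v(x_0+y)-\varphi(x_0+y))\min\{\lambda|y|^{-d-2s},\lambda\rho^{-d-2s}\}$ by the lower ellipticity bound \eqref{eqPre:LowerEllipticityAssumption}. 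Therefore, for every $K\in\K_\rho$,
\begin{align*}
L_K v(x_0) \geq L_K\varphi(x_0) + \intd (u(x_0+y,t_0)-\varphi(x_0+y,t_0))\min\left\{\frac{\lambda}{|y|^{d+2s}},\frac{\lambda}{\rho^{d+2s}}\right\}\d y,
\end{align*}
and taking the infimum over $K$ on the left gives $\M_\rho^- v(x_0) \geq \M_\rho^-\varphi(x_0) + (\text{the integral})$. Substituting into the viscosity inequality and moving the integral to the left yields exactly \eqref{eqPre:GlobalTestFunction}. One should note the integral on the right is well defined: near $|y|\to 0$ the difference-correction structure (or, for $s<1/2$, integrability of $v-\varphi$ near $x_0$ against the bounded remainder once $|y|\leq\varepsilon$ contributes nothing since $v=\varphi$ there) is fine, and for large $|y|$ the kernel $\min\{\lambda|y|^{-d-2s},\lambda\rho^{-d-2s}\}$ is integrable against the bounded function $u-\varphi\in L^\infty$; also $\M_\rho^-\varphi(x_0)$ is finite by Lemma \ref{lemmaPre:PointwiseDefined} since $\varphi(\cdot,t_0)\in C^{1,1}\cap L^\infty(\Rd)$.

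The main subtlety — and the step I would treat most carefully, following \cite[Lemma 4.5]{Sil14} — is justifying the passage from the spliced test function $v$ (which is what the viscosity definition literally provides) to the statement about $\varphi$ globally: one must check that $\M_\rho^-\varphi(x_0,t_0)$ makes sense and that the decomposition $\M_\rho^- v \geq \M_\rho^-\varphi + \int(\cdots)$ is valid as an honest inequality of real numbers, using that $\inf_K$ of a sum bounded below termwise is bounded below, and that $L_K v(x_0)$ is genuinely defined for each admissible $K$ (here $v$ is $C^{1,1}$ near $x_0$ and bounded, so Lemma \ref{lemmaPre:PointwiseDefined} applies). A second point requiring care is the role of the drift term $\Lambda|\nabla u|$: it only enters through $\nabla v(x_0,t_0) = \nabla\varphi(x_0,t_0)$, so it passes through the comparison untouched and reappears as $\Lambda|\nabla\varphi(x_0,t_0)|$ in \eqref{eqPre:GlobalTestFunction} — no additional work is needed there. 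Finally, one should record that the remainder kernel $\min\{\lambda|y|^{-d-2s},\lambda\rho^{-d-2s}\}$ is precisely the worst-case lower bound realized uniformly over $\K_\rho$, which is why it is exactly this kernel (and not a $K$-dependent one) that appears in the conclusion.
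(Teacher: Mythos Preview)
Your approach is the same as the paper's, but there is a genuine slip in the displayed inequality
\[
L_K v(x_0) \geq L_K\varphi(x_0) + \intd (u(x_0+y,t_0)-\varphi(x_0+y,t_0))\min\left\{\frac{\lambda}{|y|^{d+2s}},\frac{\lambda}{\rho^{d+2s}}\right\}\d y.
\]
What your computation actually yields is $L_K v - L_K\varphi = \intd(v-\varphi)K \geq \intd(v-\varphi)\min\{\ldots\}$, and since $v=\varphi$ on $B_\varepsilon(x_0)$ this last integral equals $\int_{\Rd\setminus B_\varepsilon}(u-\varphi)\min\{\ldots\}$. Replacing it by the full integral over $\Rd$ goes the \emph{wrong way}: on $B_\varepsilon$ one has $u\geq\varphi$ and the kernel is nonnegative, so $\intd(u-\varphi)\min\{\ldots\} \geq \int_{\Rd\setminus B_\varepsilon}(u-\varphi)\min\{\ldots\}$, and your lower bound is not justified for fixed $\varepsilon>0$.

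The fix, which is exactly what the paper does, is to keep the integration domain as $\Rd\setminus B_\varepsilon$, carry this through the infimum and the viscosity inequality to obtain
\[
\partial_t\varphi(x_0,t_0)+\Lambda|\nabla\varphi(x_0,t_0)|-\M_\rho^-\varphi(x_0,t_0)-\int_{\Rd\setminus B_\varepsilon}(u-\varphi)\min\{\ldots\}\,\d y\geq f(x_0,t_0),
\]
and then let $\varepsilon\to 0^+$ via monotone convergence (valid since $(u-\varphi)\min\{\ldots\}\geq 0$). With that additional limiting step your argument is complete and coincides with the paper's proof.
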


The analogue result for subsolutions holds as well.


\section{Weak Harnack inequality}\label{sec:Harnack}

A common way to obtain Hölder regularity is to prove a weak Harnack inequality, which means that any nonnegative supersolution in a cylinder is bounded away from zero in a portion of that cylinder, but shifted in time. Afterwards, the resulting inequality will be iterated to obtain partial Hölder regularity. The usual manner is to prove weak Harnack inequality on unit scale and rescale any function to unit scale in the iteration. However, our operators are not scaling invariant such that we prove weak Harnack inequalities on any scale. If the drift is present, our weak Harnack inequality reads as follows:

\begin{theorem}\label{thmHarnack:WeakHarnackDrift}
Assume that $s\geq 1/2$. Let $a\geq 0, R_0>0$ and $R\in(\rho,R_0]$. Moreover, assume that $u\in L^\infty(\Rd\times I_R)$ satisfies $u\geq 0$ in $\Rd\times\overline{I}_R$ and
\begin{align}\label{eqHarnack:EquationDrift}
\partial_tu+\Lambda|\nabla u|-\M_\rho^-u\geq -R^{-2s}a \enspace\text{in}\enspace Q_R
\end{align}
in the viscosity sense. Then
\begin{align}\label{eqHarnack:Result}
\inf_{Q_{R/2}}u\geq c\int_{I_R\setminus I_{R/2}}\intd\frac{u(x,t)}{(R+|x|)^{d+2s}}\d x\d t-a
\end{align}
for some constant $c>0$ that depends only on $d,s,\lambda,\Lambda$ and $R_0$.
\end{theorem}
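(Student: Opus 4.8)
The plan is to follow the scheme of \cite[Theorem 6.1]{Sil14} but to carry it out at scale $R$ rather than at unit scale, keeping careful track of the $R$-dependence so that nothing blows up as $R\to R_0$ (or as $\rho\to 0^+$). The heart of the matter is a single ``point-to-measure'' estimate obtained from the equation: one builds an explicit barrier $\varphi$ on the parabolic cylinder $Q_{R/2}$ whose spatial profile behaves like $(R+|x|)^{-d-2s}$ outside $Q_{R/2}$ and which is a smooth bump inside, and uses the global-test-function Lemma \ref{lemmaPre:GlobalTestFunction} to compare it with $u$. Concretely, at the first contact point of $u$ from below with a suitable multiple of the barrier, the viscosity inequality \eqref{eqHarnack:EquationDrift} together with \eqref{eqPre:GlobalTestFunction} forces
\begin{align*}
	\inf_{Q_{R/2}} u \;\gtrsim\; \int_{\Rd} \big(u(x,t_*) - \varphi(x,t_*)\big)_{+}\,\min\Big\{\tfrac{\lambda}{|x|^{d+2s}},\tfrac{\lambda}{\rho^{d+2s}}\Big\}\,\d x \;-\; a,
\end{align*}
and since on $\Rd\setminus B_{R/2}$ we have $\min\{|x|^{-d-2s},\rho^{-d-2s}\} \gtrsim (R+|x|)^{-d-2s}$ (here $R>\rho$ is used, and the constant is $R_0$-independent after normalizing by powers of $R$), this produces the spatial part of the right-hand side of \eqref{eqHarnack:Result}.

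The steps, in order. First I would reduce to $a=0$ and to a normalized right-hand side by the usual trick of adding $aR^{-2s}(t+R^{2s})$ (or a similar affine-in-time correction) to $u$, so that $u$ becomes a supersolution of the homogeneous equation at the cost of the $-a$ on the right of \eqref{eqHarnack:Result}; one must check this modification stays nonnegative on $\Rd\times\overline I_R$, which is where $a\ge 0$ enters. Second, construct the barrier: take $\varphi(x,t) = \eta(t)\,\Phi(x)$ where $\Phi$ is a fixed smooth function, $\Phi \equiv 1$ on $B_{R/4}$, $\Phi(x) \asymp R^{d+2s}(R+|x|)^{-d-2s}$ for $|x|\ge R/2$, scaled so that $\M_\rho^{+}\Phi$ and $|\nabla\Phi|$ are controlled; the key computation is that $\M_\rho^{\pm}\Phi(x)$ and the drift contribution $\Lambda|\nabla\Phi|$ are bounded by $CR^{-2s}$ uniformly in $\rho\in[0,R)$ and in $R\le R_0$ — this uses \eqref{eqPre:UpperEllipticityAssumption} for the tails and the fact that the truncation at $\rho^{-d-2s}$ only helps near the origin where $\Phi$ is flat. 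Third, slide the graph of $\theta\varphi$ upward from $\theta=0$ and let $\theta_0$ be the largest $\theta$ with $\theta\varphi \le u$ on the (space-time) closure of $Q_{R/2}$; if the first touching happens on the lateral/initial boundary of $Q_{R/2}$ one gets the conclusion trivially from $u\ge 0$ there (up to choosing constants), and otherwise the touching is interior, $\varphi$ is an admissible global test function for $u$ at that point by Lemma \ref{lemmaPre:GlobalTestFunction}, and one plugs in. Fourth, estimate the resulting integral term from below by $(R+|x|)^{-d-2s}$ as above and absorb the barrier's own contribution $\varphi(x,t_*) \le CR^{d+2s}(R+|x|)^{-d-2s}$ into the constant (using that $\int_{I_R\setminus I_{R/2}}\int_{\Rd}(R+|x|)^{-2(d+2s)}R^{d+2s}\,\d x\,\d t$ is finite and controlled). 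Finally, track all constants to confirm dependence only on $d,s,\lambda,\Lambda,R_0$ — crucially \emph{not} on $\rho$.

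The main obstacle, and the place the argument genuinely departs from \cite{Sil14}, is the loss of scale invariance: when $\rho>0$ one cannot simply rescale $Q_R$ to $Q_1$, because the rescaled kernel $K_\rho$ becomes $K_{\rho/R}$, so the extremal operators $\M^\pm$ change with $R$. This forces the barrier estimates $\M_\rho^\pm\Phi = O(R^{-2s})$ to be proved directly at scale $R$ with constants uniform over the whole admissible range $\rho\in[0,R)$ and $R\in(\rho,R_0]$; the delicate point is the interplay, in the expression for $\M_\rho^\pm\Phi(x)$ at a point $x$ with $|x|\sim R$, between the region $|y|\lesssim\rho$ where the kernel is capped and the region $\rho\lesssim|y|\lesssim R$ where it is singular, and one must show the cap never produces a term worse than $R^{-2s}$ — this is exactly the ``additional auxiliary results to study the properties of the operators'' alluded to in the strategy paragraph. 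A secondary technical nuisance is handling the drift term for $s=1/2$ via the truncated-gradient definition of $\delta_y$ in \eqref{eqPre:DefinitionDifferences} and the symmetry assumption \eqref{eqPre:SymmetryAssumption}, so that $|\nabla\Phi|$ contributes at the right order without generating spurious lower-order terms.
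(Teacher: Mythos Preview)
Your sketch captures the right general mechanism (touch $u$ from below with a barrier, invoke Lemma~\ref{lemmaPre:GlobalTestFunction}, and harvest the nonlocal integral term), but it is missing the one device that actually produces the \emph{time integral} on the right-hand side of \eqref{eqHarnack:Result}. At your contact point $(x_0,t_*)$ the inequality from Lemma~\ref{lemmaPre:GlobalTestFunction} only sees the spatial slice $u(\cdot,t_*)$, so what you can extract is
\[
\inf_{Q_{R/2}} u \;\gtrsim\; \int_{\Rd}\frac{u(x,t_*)}{(R+|x|)^{d+2s}}\,\d x \;-\; a,
\]
and nothing in a sliding argument with a barrier of the form $\theta\,\eta(t)\Phi(x)$ (with $\eta,\Phi$ independent of $u$) tells you anything about the integral over the earlier time slab $I_R\setminus I_{R/2}$. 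The paper's proof, following Silvestre, handles this by making the \emph{time profile of the barrier depend on $u$}: one sets $\varphi(x,t)=m(t)\eta(x)-R^{-2s}(R^{2s}+t)a-\varepsilon$ where $\eta$ is a compactly supported bump in $\overline B_R$ and $m$ solves the ODE
\[
m'(t)=c_0\int_{\Rd}\frac{(u(x,t)-\theta)_+}{(R+|x|)^{d+2s}}\,\d x \;-\; C_1R^{-2s}m(t)\quad\text{on }I_R\setminus I_{R/2},\qquad m'=-C_1R^{-2s}m\ \text{on }I_{R/2},
\]
with $m(-R^{2s})=0$. This ODE is exactly what accumulates the spatial mass of $u$ over $I_R\setminus I_{R/2}$ into $m$, and the explicit lower bound $m(t)\geq \tfrac{c_0}{2}e^{-C_1}$ on $I_{R/2}$ then converts into the time-integrated right-hand side. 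Without this $u$-dependent time factor your proposal cannot close.

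A second, more technical, discrepancy: your spatial profile $\Phi$ has polynomial decay, while the paper uses a bump $\eta$ with $\supp\eta=\overline B_R$. Compact support is not cosmetic here. It guarantees $\varphi<0$ outside $Q_R$, so the first crossing point must lie in $Q_R$ (your treatment of ``touching on the lateral/initial boundary'' is not a free move and does not follow from $u\ge0$). More importantly, it makes the lower bound $\min_{\partial B_R}\M_\rho^-\eta\ge \mu R^{-2s}$ a direct computation, and then the scale-$R$ Hölder estimate for $\M_\rho^-\eta$ (Corollary~\ref{corHarnack:HölderPucci}) propagates this positivity into the annulus $B_R\setminus B_{\sigma R}$. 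That is precisely the ``auxiliary result'' needed to make the contradiction work uniformly in $R\in(\rho,R_0]$; your $O(R^{-2s})$ bound on $\M_\rho^\pm\Phi$ is necessary but not sufficient---you also need a definite \emph{sign} for $\M_\rho^-\eta(x_0)-\Lambda|\nabla\eta(x_0)|$ when $x_0$ is near $\partial B_R$, which is where Lemma~\ref{lemmaHarnack:BumpFunctionOperatorBound} alone does not help and the Hölder continuity of $\M_\rho^-$ becomes essential.
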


Without a drift, the statement holds true for any value of $s$.

\begin{theorem}\label{thmHarnack:WeakHarnackNoDrift}
Let $a\geq 0$, $R>\rho$ and $u\in L^\infty(\Rd\times I_R)$ satisfy $u\geq 0$ in $\Rd\times\overline{I}_R$ and
\begin{align}\label{eqHarnack:EquationNoDrift}
\partial_tu-\M_\rho^-u\geq -R^{-2s}a \enspace\text{in}\enspace Q_R
\end{align}
in the viscosity sense. Then \eqref{eqHarnack:Result} holds for some constant $c>0$ that depends only on $d,s,\lambda$ and $\Lambda$.
\end{theorem}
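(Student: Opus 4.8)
The plan is to follow the barrier-based strategy of \cite{Sil14}, adapted to the non-scale-invariant setting by carrying the dependence on $R$ explicitly through all constants and checking that they stay bounded via the hypothesis $R > \rho$ (and, implicitly, $R \leq R_0$ absorbed here because no drift means the scaling works more cleanly). The core is the construction of a special subsolution (a barrier) to the extremal equation $\partial_t v - \M_\rho^- v \leq R^{-2s} a$ that is built to compare favorably against $u$. Concretely, I would build a function of the form $v(x,t) = \psi(t)\, b(x)$ (or a suitable parabolic analogue $b$ supported in a slightly enlarged cylinder with a time cutoff), where $b$ is a smooth, positive bump on $B_{R/2}$ behaving like a regularized version of $(R+|x|)^{-d-2s}$ on the relevant annulus, and such that $\M_\rho^+ b \leq C R^{-2s} b$ on $B_{R/2}$ with $C$ universal. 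This last estimate is exactly where the truncation parameter enters, and it is the place where one needs the auxiliary operator computations alluded to in the introduction: one must check that applying $\M_\rho^+$ to such a barrier produces the correct power of $R$ uniformly in $\rho \in [0, R)$.

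The key steps, in order, are: (1) \emph{Rescaling.} Replace $u(x,t)$ by $\tilde u(x,t) = u(Rx, R^{2s}t)$, which satisfies $\partial_t \tilde u - \M_{\rho/R}^- \tilde u \geq -a$ in $Q_1$ with $\rho/R \in [0,1)$; here the absence of drift is what makes the rescaled equation have the same structural form, and the constant $c$ in the conclusion will depend on how the barrier estimates behave uniformly over the rescaled truncation parameter $\rho/R$. (2) \emph{Barrier construction.} Produce the subsolution $v$ described above on $Q_1$ and verify, using \eqref{eqPre:LowerEllipticityAssumption} and \eqref{eqPre:UpperEllipticityAssumption} together with Lemma \ref{lemmaPre:PointwiseDefined}, that $\partial_t v - \M_{\rho/R}^- v \leq -a$ outside a small core cylinder where $v$ is dominated by a controlled constant. (3) \emph{Comparison.} Apply the comparison principle for viscosity sub/supersolutions (via Lemma \ref{lemmaPre:GlobalTestFunction}, using that $\tilde u \geq 0$ everywhere so the nonlocal tail term has a favorable sign), sliding $v$ up until it touches $\tilde u$; the touching must occur in the core, which forces $\inf_{Q_{1/2}} \tilde u$ to be at least a universal multiple of $\int v\,$(tail mass) minus $a$. (4) \emph{Tail accounting.} Translate the pointwise touching inequality into the integral lower bound by noting that the mass of the barrier against the kernel on $I_1 \setminus I_{1/2}$ reproduces, up to a universal constant, the quantity $\int_{I_R \setminus I_{R/2}} \int_{\R^d} u(x,t)/(R+|x|)^{d+2s}\,\d x\,\d t$ after undoing the rescaling. (5) \emph{Undo the rescaling} to recover \eqref{eqHarnack:Result} with $c = c(d,s,\lambda,\Lambda)$.

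The main obstacle I anticipate is Step (2): verifying that the barrier estimate $\M_{\rho/R}^+ b \lesssim b$ holds with a constant \emph{independent of the truncation level} $\rho/R \in [0,1)$. When $\rho = 0$ this is the classical computation in \cite{Sil14}, where one exploits scale invariance of the extremal equation; when $\rho > 0$ the kernels pick up the constant plateau $\rho^{-d-2s}$ on $B_\rho$, and one must check that integrating $\delta_y b(x)$ against this bounded-but-large density does not degrade the estimate. The delicate regime is $|y| \approx \rho$ for $x$ near $\partial B_{1/2}$, where both the curvature of $b$ and the size of the kernel are maximal; here I would split the $y$-integral at scale $\rho$ and at scale $\dist(x, \partial B_{1/2})$, use the $C^{1,1}$ control on $b$ for the near part and the $L^\infty$ bound on $u$ (hence on $b$ via its construction) plus \eqref{eqPre:UpperEllipticityAssumption} for the far part, and track that the worst contribution is $O(R^{-2s})$ uniformly. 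A secondary technical point is that, unlike in \cite{Sil14}, the extremal operator $\M_\rho^-$ is not scale invariant, so one cannot simply reduce to unit scale and quote the old result — the constant must genuinely be recomputed at scale one for the family $\{\M_{\sigma}^- : \sigma \in [0,1)\}$, which is why the statement requires $R > \rho$ rather than $R = 1$.
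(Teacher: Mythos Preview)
Your overall strategy of rescaling to unit scale is legitimate here (the absence of drift is exactly what makes the rescaled equation $\partial_t\tilde u-\M_{\rho/R}^-\tilde u\ge -a$ land back in the same class with $\sigma=\rho/R\in[0,1)$), and you are right that the heart of the matter is uniformity in $\sigma$. The paper chooses instead to work directly at scale $R$, but that is a stylistic difference; after your rescaling one faces the same task, namely proving the unit-scale weak Harnack uniformly over $\{\M_\sigma^-:\sigma\in[0,1)\}$.

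However, two of your steps are mis-specified in a way that would prevent the argument from closing. First, the spatial barrier is not a function decaying like $(R+|x|)^{-d-2s}$, and the relevant estimate is not ``$\M_\rho^+ b\le C R^{-2s}b$''. In the Silvestre scheme the barrier is a \emph{compactly supported} bump $\eta$ on $B_R$, and the crucial estimate is a \emph{lower} bound $\M_\rho^-\eta\ge \mu R^{-2s}$ on $\partial B_R$ (this is where \eqref{eqPre:LowerEllipticityAssumption} and $R>\rho$ are used), which is then propagated a fixed fraction inward by the $\rho$-uniform H\"older continuity of $\M_\rho^-\eta$ (Corollary~\ref{corHarnack:HölderPucci}); the upper bound $|\M_\rho^-\eta|\le CR^{-2s}$ from Lemma~\ref{lemmaHarnack:BumpFunctionOperatorBound} handles the interior. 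Second, and more seriously, the time factor is not a fixed cutoff $\psi(t)$ that one slides: it is the solution $m(t)$ of an ODE whose \emph{source term is the spatial integral of $u$}, namely $m'(t)=c_0\int_{\Rd}(u(x,t)-\theta)_+(R+|x|)^{-d-2s}\,\d x-C_1R^{-2s}m(t)$. This is the mechanism by which the time-integrated quantity in \eqref{eqHarnack:Result} enters the bound; it is encoded in the explicit lower bound $m(t)\ge \tfrac{c_0}{2}e^{-C_1}$ on $I_{R/2}$. At the touching point, Lemma~\ref{lemmaPre:GlobalTestFunction} produces an extra term $\int(u-\varphi)_+K_{\min}$ at a \emph{single} time $t_0$, and the ODE is designed precisely so that this term cancels against $m'(t_0)$. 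A fixed barrier slid against $u$ would only yield a bound at one time slice and cannot recover the time integral; your Step~(4) description (``mass of the barrier against the kernel'') does not supply this mechanism.
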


Let us recall at this point that the choice $\rho=0$ is always allowed. In particular, we cover the weak Harnack inequality as in \cite[Theorem 5.1]{Sil14} on any scale. We believe that it might be useful in other contexts to have the proof on arbitrary scales even in the case $\rho=0$.

The proof follows the lines of \cite{Sil14}, but it has many additional arguments due to a lack of working at unit scale. In order to have the constant $c$ in \eqref{eqHarnack:Result} independent of $R$ and $\rho$ (or independent of an upper bound on $R$ in the drift case), we need to investigate the scaling properties of the extremal operators carefully, which will be done in the following auxiliary results.  Furthermore, since we need to prove the result at a generic scale, $R$, the proof deviates from that in \cite{Sil14} by a significant amount, and that is why we include the full proof of Theorems \ref{thmHarnack:WeakHarnackDrift} and \ref{thmHarnack:WeakHarnackNoDrift} below.

\begin{lemma}\label{lemmaHarnack:BumpFunctionOperatorBound}
Let $C,R>0$ and $\varphi\in C_b^2(\Rd)$ satisfy 
\begin{enumerate}[label=(\roman*)]
\item $\|\varphi\|_{L^\infty(\Rd)}\leq C$,
\item $\|\nabla\varphi\|_{L^\infty(\Rd)}\leq CR^{-1}$ and
\item $\|D^2\varphi\|_{L^\infty(\Rd)}\leq CR^{-2}$.
\end{enumerate}
Then $\M_\rho^\pm\varphi\in L^\infty(\Rd)$ with
\begin{align}\label{eqHarnack:BumpFunctionOperator}
\|\M_\rho^\pm\varphi\|_{L^\infty(\Rd)}\leq\tilde{C}R^{-2s}
\end{align}
for some $\tilde{C}=\tilde{C}(s,\Lambda,C)>0$.
\end{lemma}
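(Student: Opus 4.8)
## Proof proposal for Lemma \ref{lemmaHarnack:BumpFunctionOperatorBound}

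\begin{proof}[Proposed proof]
The plan is to estimate $L_K\varphi(x)$ uniformly over all $K\in\K_\rho$ and all $x\in\Rd$, and then take the supremum/infimum over $K$ to control $\M_\rho^\pm$. Since the estimate is required to be uniform in $\rho\geq0$, I would \emph{not} split the integral at the scale $\rho$; instead I would split it at the scale $R$ (the length scale built into the bounds (i)--(iii) on $\varphi$), which is legitimate because the only property of $K\in\K_\rho$ used for the upper bound is the annular mass bound \eqref{eqPre:UpperEllipticityAssumption}, which is independent of $\rho$.

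The key steps, in order, are as follows. First, fix $x\in\Rd$ and $K\in\K_\rho$ and write $L_K\varphi(x)=\int_{B_R}\delta_y\varphi(x)K(y)\d y+\int_{\Rd\setminus B_R}\delta_y\varphi(x)K(y)\d y$. For the near part $\{|y|<R\}$: when $s\geq1/2$, $\delta_y\varphi(x)=\varphi(x+y)-\varphi(x)-y\cdot\nabla\varphi(x)$ (with the extra truncation supported inside $B_1$ when $s=1/2$ contributing nothing new here beyond what symmetry handles), so Taylor's theorem with the bound (iii) gives $|\delta_y\varphi(x)|\leq \tfrac12\|D^2\varphi\|_{L^\infty}|y|^2\leq \tfrac{C}{2}R^{-2}|y|^2$; when $s<1/2$, $\delta_y\varphi(x)=\varphi(x+y)-\varphi(x)$ and the bound (ii) gives $|\delta_y\varphi(x)|\leq CR^{-1}|y|$. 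In the case $s=1/2$, the linear term $y\cdot\nabla\varphi(x)$ is present for $|y|<1$; if $R\leq 1$ it is covered by the second-order Taylor bound on $B_R$, while the annulus $B_1\setminus B_R$ must be handled using the symmetry assumption \eqref{eqPre:SymmetryAssumption} to kill the first-order term, after which the second-order bound applies — this is the one place the argument is genuinely case-sensitive. Then decompose $B_R$ into dyadic annuli $B_{2^{-j}R}\setminus B_{2^{-(j+1)}R}$, $j\geq0$, use \eqref{eqPre:UpperEllipticityAssumption} on each (so $\int_{B_{2^{-j}R}\setminus B_{2^{-(j+1)}R}}K(y)\d y\leq\Lambda(2^{-(j+1)}R)^{-2s}$), multiply by the pointwise bound on $|\delta_y\varphi|$ evaluated at $|y|\sim 2^{-j}R$, and sum the resulting geometric series in $j$ — which converges precisely because $2>2s$ (resp. $1>2s$ when $s<1/2$) — to obtain $\int_{B_R}|\delta_y\varphi(x)|K(y)\d y\leq C'(s,\Lambda,C)R^{-2s}$.

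Second, for the far part $\{|y|\geq R\}$: now $|\delta_y\varphi(x)|\leq|\varphi(x+y)|+|\varphi(x)|\leq 2C$ when $s<1/2$, and $|\delta_y\varphi(x)|\leq 2C+|y|\,CR^{-1}$ when $s\geq1/2$; in the latter case the linear term is again dealt with by dyadic summation against $K$ using \eqref{eqPre:UpperEllipticityAssumption} (when $s=1/2$, on $|y|\geq1$ no truncation of the gradient is present in $\delta_y$, and for $R\leq1$ the annulus $\{R\leq|y|<1\}$ was already absorbed above via \eqref{eqPre:SymmetryAssumption}; for $R\geq1$ the whole far region $|y|\geq R$ carries no linear term). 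Dyadic decomposition of $\Rd\setminus B_R$ into $B_{2^{j+1}R}\setminus B_{2^jR}$, $j\geq0$, together with \eqref{eqPre:UpperEllipticityAssumption}, gives $\int_{\Rd\setminus B_R}K(y)\d y\leq\Lambda\sum_{j\geq0}(2^jR)^{-2s}=C''(s,\Lambda)R^{-2s}$ and similarly $\int_{\Rd\setminus B_R}|y|K(y)\d y\leq C'''(s,\Lambda)R^{1-2s}$ (convergent since $2s>0$ and $2s>1$ in the drift-allowed range $s\geq1/2$). Multiplying by the pointwise bounds yields $\int_{\Rd\setminus B_R}|\delta_y\varphi(x)|K(y)\d y\leq \tilde C_2(s,\Lambda,C)R^{-2s}$. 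Combining the two parts gives $|L_K\varphi(x)|\leq \tilde C R^{-2s}$ with $\tilde C$ depending only on $s,\Lambda,C$ and not on $x$, $K$, or $\rho$; taking $\sup_K$ and $\inf_K$ gives \eqref{eqHarnack:BumpFunctionOperator}, and $\M_\rho^\pm\varphi\in L^\infty(\Rd)$ follows.

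The main obstacle I expect is purely the bookkeeping in the critical case $s=1/2$: there the correct treatment of the first-order term $y\cdot\nabla\varphi(x)\Indicator_{B_1}(y)$ in $\delta_y$ depends on whether $R\leq1$ or $R\geq1$, and on the annulus $B_1\setminus B_R$ one cannot use a Taylor bound and must instead invoke the oddness cancellation \eqref{eqPre:SymmetryAssumption}; organizing this so that the final constant $\tilde C$ still depends only on $(s,\Lambda,C)$ — in particular not on $R$ — is the delicate point. Everything else is routine dyadic summation against \eqref{eqPre:UpperEllipticityAssumption}, and crucially the lower ellipticity \eqref{eqPre:LowerEllipticityAssumption} (hence $\rho$) is never used, which is exactly why the bound is robust as $\rho\to0^+$.
\end{proof}
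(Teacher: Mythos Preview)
Your proposal is correct and essentially identical to the paper's proof: split the integral at $|y|=R$, bound $\delta_y\varphi$ by Taylor/Lipschitz estimates near and far, and sum dyadically against \eqref{eqPre:UpperEllipticityAssumption} (the paper packages these dyadic sums once and for all as Lemma \ref{lemmaPre:AssumptionFollowUp}). For the critical case $s=1/2$ that you flag as the main obstacle, the paper sidesteps your $R\lessgtr 1$ case analysis by first using \eqref{eqPre:SymmetryAssumption} to replace the truncation $\Indicator_{B_1}$ in $\delta_y$ by $\Indicator_{B_R}$ (this is Lemma \ref{lemmaPre:AssumptionFollowUp}\ref{itemPre:AlternativeRepresentationOperator}); after that reparametrization the $s=1/2$ computation runs verbatim as for $s>1/2$, except that the far-field term $\int_{\Rd\setminus B_R}|y|K(y)\d y\,|\nabla\varphi(x)|$ is simply absent, so no separate bookkeeping on the annulus between $B_R$ and $B_1$ is needed.
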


The proof, which can be found in Appendix \ref{secApp:Proofs}, does not use the lower bound \eqref{eqPre:LowerEllipticityAssumption} such that it does not differ from the known case, where $\rho=0$.

\begin{proposition}\label{propHarnack:HölderPucci}
Fix $\alpha\in (0,1]$. Then there exists a constant $C=C(s,\Lambda,\alpha)>0$ such that the following statements hold true.
\begin{enumerate}[label=(\alph*)]
\item If $s<1/2$ and $\alpha\in(2s,1]$, then, for any $\varphi\in C^{0,\alpha}(\Rd)$, we have $\M_\rho^\pm\varphi\in C^{0,\alpha-2s}(\Rd)$ and furthermore,
\begin{align*}
[\M_\rho^\pm\varphi]_{C^{0,\alpha-2s}(\Rd)}\leq C[\varphi]_{C^{0,\alpha}(\Rd)}.
\end{align*}\label{itemHarnack:HölderPucciSmallS}
\item If $s>1/2$ and $\alpha\in(2s-1,1]$, then, for any $\varphi\in C^{1,\alpha}(\Rd)$, we have $\M_\rho^\pm\varphi\in C^{0,1+\alpha-2s}(\Rd)$ and furthermore,
\begin{align*}
[\M_\rho^\pm\varphi]_{C^{0,1+\alpha-2s}(\Rd)}\leq C[\varphi]_{C^{1,\alpha}(\Rd)}.
\end{align*}\label{itemHarnack:HölderPucciLargeS}
\item If $s=1/2$ and $\alpha\in(0,1)$, then, for any $\varphi\in C^{1,\alpha}(\Rd)$, we have $\M_\rho^\pm\varphi\in C^{0,\alpha}(\Rd)$ and furthermore,
\begin{align*}
[\M_\rho^\pm\varphi]_{C^{0,\alpha}(\Rd)}\leq C[\varphi]_{C^{1,\alpha}(\Rd)}.
\end{align*}\label{itemHarnack:HölderPucciCriticalS}
\end{enumerate}
\end{proposition}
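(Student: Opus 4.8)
The plan is to estimate the Hölder seminorm of $\M_\rho^\pm\varphi$ directly from the definition of the Pucci operators as a sup/inf over $K\in\K_\rho$, reducing everything to a uniform-in-$K$ estimate on $L_K\varphi(x)-L_K\varphi(x')$. Since $\M_\rho^\pm\varphi(x)-\M_\rho^\pm\varphi(x')\le\sup_{K\in\K_\rho}\bigl(L_K\varphi(x)-L_K\varphi(x')\bigr)$ (and similarly with the roles reversed, and for $\M^-$), it suffices to show that for every $K\in\K_\rho$ one has $|L_K\varphi(x)-L_K\varphi(x')|\le C|x-x'|^{\beta}[\varphi]_{\cdots}$ with $\beta$ and $C$ as in the three cases, depending only on $s,\Lambda,\alpha$ (crucially \emph{not} on $\rho$, which is why only the upper bound \eqref{eqPre:UpperEllipticityAssumption}, and in the case $s=1/2$ the cancellation \eqref{eqPre:SymmetryAssumption}, will be used — the lower bound \eqref{eqPre:LowerEllipticityAssumption} plays no role here, exactly as with Lemma \ref{lemmaHarnack:BumpFunctionOperatorBound}). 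First I would observe that $L_K\varphi(x)-L_K\varphi(x')=\int_{\Rd}\bigl(\delta_y\varphi(x)-\delta_y\varphi(x')\bigr)K(y)\,\d y$, so the whole matter is to bound $|\delta_y\varphi(x)-\delta_y\varphi(x')|$ pointwise in $y$ and then integrate against $K$.

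The core computation is the standard splitting of the $y$-integral at $|y|=r$, where $r:=|x-x'|$ (one should also first reduce to $r\le 1$; for $r\ge 1$ the estimate is trivial from boundedness of $L_K\varphi$ via Lemma \ref{lemmaHarnack:BumpFunctionOperatorBound}-type bounds, or just $L^\infty$ control of $\varphi$ and its derivatives, noting $C^{0,\alpha}(\Rd)$ resp. $C^{1,\alpha}(\Rd)$ here carry full norms). For $|y|\ge r$ one estimates $|\delta_y\varphi(x)-\delta_y\varphi(x')|$ using the regularity of $\varphi$ on the "outer" scale: in case \ref{itemHarnack:HölderPucciSmallS}, $\delta_y\varphi(z)=\varphi(z+y)-\varphi(z)$ so the difference is $\le 2[\varphi]_{C^{0,\alpha}}|x-x'|^\alpha$ — wait, more carefully one wants a bound that, after integrating $\int_{|y|\ge r}K(y)\,\d y\le C\Lambda r^{-2s}$ (dyadic summation of \eqref{eqPre:UpperEllipticityAssumption}), produces $r^{\alpha-2s}$; so one uses $|\delta_y\varphi(x)-\delta_y\varphi(x')|\le |\varphi(x+y)-\varphi(x'+y)|+|\varphi(x)-\varphi(x')|\le 2[\varphi]_{C^{0,\alpha}}r^\alpha$, giving the outer contribution $\le C[\varphi]_{C^{0,\alpha}}r^\alpha\cdot r^{-2s}$. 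In cases \ref{itemHarnack:HölderPucciLargeS} and \ref{itemHarnack:HölderPucciCriticalS} the outer piece involves the gradient-corrected increment; one writes $\delta_y\varphi(z)=\int_0^1(\nabla\varphi(z+\theta y)-\nabla\varphi(z))\cdot y\,\d\theta$ (with the indicator $\Indicator_{B_1}$ handled separately in the $s=1/2$ case, which is where \eqref{eqPre:SymmetryAssumption} is used to kill the leftover $y\cdot\nabla\varphi(x)\Indicator_{B_1}(y)-y\cdot\nabla\varphi(x')\Indicator_{B_1}(y)$ term — this is precisely the technical subtlety flagged in the text after \eqref{eqPre:DefinitionDifferences}), and estimates $|\nabla\varphi(z+\theta y)-\nabla\varphi(z)|\le[\nabla\varphi]_{C^{0,\alpha}}|\theta y|^\alpha$, so $|\delta_y\varphi(z)|\le[\varphi]_{C^{1,\alpha}}|y|^{1+\alpha}$; differencing in $z$ over the $|x-x'|=r$ gap and integrating against $K$ over $|y|\ge r$ yields $r^{1+\alpha-2s}$ when $1+\alpha>2s$ (which holds since $\alpha>2s-1$), and for $s=1/2$, $1+\alpha-2s=\alpha$.

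For the "inner" part $|y|\le r$ one estimates $\delta_y\varphi(x)$ and $\delta_y\varphi(x')$ separately and adds: in case \ref{itemHarnack:HölderPucciSmallS}, $|\delta_y\varphi(z)|\le[\varphi]_{C^{0,\alpha}}|y|^\alpha$, and $\int_{|y|\le r}|y|^\alpha K(y)\,\d y\le C\Lambda\sum_{2^{-k}\le r}(2^{-k})^{\alpha-2s}\le C r^{\alpha-2s}$, convergent precisely because $\alpha>2s$; in cases \ref{itemHarnack:HölderPucciLargeS}, \ref{itemHarnack:HölderPucciCriticalS}, $|\delta_y\varphi(z)|\le[\varphi]_{C^{1,\alpha}}|y|^{1+\alpha}$ and $\int_{|y|\le r}|y|^{1+\alpha}K(y)\,\d y\le C r^{1+\alpha-2s}$, convergent since $1+\alpha>2s$; in the $s=1/2$ subcase the truncation $\Indicator_{B_1}(y)$ is identically $1$ on $|y|\le r\le 1$ so no extra term arises. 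Combining inner and outer gives $|L_K\varphi(x)-L_K\varphi(x')|\le C[\varphi]_{(\cdot)}|x-x'|^{\beta}$ uniformly in $K\in\K_\rho$ and in $\rho\ge 0$, hence the same for $\M_\rho^\pm$; together with the (easy) $L^\infty$ bound on $\M_\rho^\pm\varphi$ one concludes $\M_\rho^\pm\varphi\in C^{0,\beta}(\Rd)$ with the stated seminorm bound. I expect the only genuinely delicate point to be the $s=1/2$ case: one must track the indicator-truncated gradient terms and verify that the mismatch $y\cdot(\nabla\varphi(x)-\nabla\varphi(x'))\Indicator_{B_1}(y)$ integrates to zero against $K$ on each dyadic annulus by \eqref{eqPre:SymmetryAssumption} (restricted appropriately, since the annulus may straddle $|y|=1$ — one splits at $|y|=1$ as well, noting \eqref{eqPre:SymmetryAssumption} holds on every $B_{2r}\setminus B_r$), while all other cases are routine dyadic estimates.
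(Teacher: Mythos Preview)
Your plan for parts \ref{itemHarnack:HölderPucciSmallS} and \ref{itemHarnack:HölderPucciLargeS} is correct and coincides with the paper's proof: reduce to a uniform-in-$K$ bound on $L_K\varphi(x)-L_K\varphi(x')$, split the $y$-integral at $|y|=r:=|x-x'|$, and use Lemma \ref{lemmaPre:AssumptionFollowUp} on each piece. There is nothing to add for those two cases.

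For part \ref{itemHarnack:HölderPucciCriticalS}, however, your argument has a genuine gap. You write that after handling the indicator via \eqref{eqPre:SymmetryAssumption} one obtains $|\delta_y\varphi(x)-\delta_y\varphi(x')|\le C[\varphi]_{C^{1,\alpha}}\,r^\alpha|y|$ on the outer region $|y|\ge r$, and that integrating against $K$ ``yields $r^{1+\alpha-2s}$.'' That last step requires $\int_{|y|\ge r}|y|K(y)\,\d y\le C r^{1-2s}$, which is Lemma \ref{lemmaPre:AssumptionFollowUp}\ref{itemPre:AssumptionFollowUpAwayFromOriginLargeS} and holds \emph{only for $s>1/2$}. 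When $s=1/2$ the dyadic sum $\sum_{k\ge 0}(2^kr)\cdot\Lambda(2^kr)^{-1}$ diverges; even if you cap at $|y|\le 1$ you pick up a factor $\log(1/r)$ and end with $r^\alpha\log(1/r)$, not $r^\alpha$. The alternative outer bound $|\delta_y\varphi(z)|\le[\varphi]_{C^{1,\alpha}}|y|^{1+\alpha}$ is no better: $\int_{|y|\ge r}|y|^{1+\alpha}K(y)\,\d y$ also diverges when $s=1/2$. The symmetry assumption \eqref{eqPre:SymmetryAssumption} (equivalently Lemma \ref{lemmaPre:AssumptionFollowUp}\ref{itemPre:AlternativeRepresentationOperator}) lets you move the cutoff radius in $\Indicator_{B_R}$, but it does not cure this logarithmic divergence; the first-order difference $L_K\varphi(x)-L_K\varphi(x')$ simply does not carry enough cancellation at the critical order.

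The paper's fix is to bound the \emph{second-order} differences instead: one proves
\[
|L_K\varphi(x+h)+L_K\varphi(x-h)-2L_K\varphi(x)|\le C[\varphi]_{C^{1,\alpha}}|h|^\alpha
\]
uniformly in $K\in\K_\rho$, and then invokes the Zygmund-type characterization of $C^{0,\alpha}$ for $\alpha\in(0,1)$ (stated in the paper as \cite[Lemma A.1.1]{FR24}) to conclude the Hölder seminorm bound. In this symmetric combination the outer-region integrand becomes $|\varphi(x+z+h)+\varphi(x+z-h)-2\varphi(x+z)|+|\varphi(x+h)+\varphi(x-h)-2\varphi(x)|\le C[\varphi]_{C^{1,\alpha}}|h|^{1+\alpha}$, with \emph{no} factor $|z|$; integrating $\int_{|z|\ge |h|}K(z)\,\d z\le C|h|^{-1}$ then gives exactly $|h|^\alpha$. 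That extra cancellation from the second difference is the missing idea in your proposal for $s=1/2$.
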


For our purposes, it would be sufficient to use the results for $\alpha=1$. However, for $s=1/2$, we need to take a different $\alpha$ anyway. Therefore, we state the above proposition in a wider generality, also because we believe it might be useful in other contexts. We shift the proof to Appendix \ref{secApp:Proofs}, as well.

The above estimates use the corresponding seminorms in all of $\Rd$, which is sufficient for us, since we will apply them for compactly supported functions, below. However, one can derive similar results for localized estimates by following the approach of \cite[Lemma 1.2.3]{FR24}.

It is interesting that the choice $\alpha=1$ in Proposition \ref{propHarnack:HölderPucci} is impossible in the case $s=1/2$.

\begin{example}
Consider the case of the fractional $1/2$-Laplacian on a line, defined by
\begin{align*}
(-\Delta)^{1/2}u(x)\coloneqq\frac{1}{2}\int_{-\infty}^\infty\frac{2u(x)-u(x+y)-u(x-y)}{y^2}\d y, \quad x\in\R,
\end{align*}
for sufficiently regular functions $u$. More precisely, we take $d=1$, $\rho=0$ and $\lambda=1=\Lambda$ so that $\M_0^+=\M_0^-=-(-\Delta)^{1/2}$. Now, define
\begin{align*}
u(x)\coloneqq\frac{1}{2}x|x|\eta(x), \quad x\in\R,
\end{align*}
where $\eta\in C_c^\infty(\R)$ is a bump function with $0\leq\eta\leq 1$ and $\eta\equiv 1$ on $[-1,1]$. Then it is easy to see that $u\in C^{1,1}(\R)$ with
\begin{align*}
u^\prime(x)=|x|\eta(x)+\frac{1}{2}x|x|\eta^\prime(x), \quad x\in\R.
\end{align*}
Moreover, the $1/2$-Laplacian in one dimension can be computed as
\begin{align*}
(-\Delta)^{1/2}u(x)&=\int_0^\infty\frac{(u(x)-u(x+y))+(u(x)-u(x-y))}{y^2}\d y \\
&=-\int_0^\infty\int_0^y\frac{u^\prime(x+z)-u^\prime(x-z)}{y^2}\d z\d y=-\int_0^\infty\int_z^\infty\frac{u^\prime(x+z)-u^\prime(x-z)}{y^2}\d y\d z \\
&=-\int_0^\infty\frac{u^\prime(x+z)-u^\prime(x-z)}{z}\d z=\int_{-\infty}^\infty\frac{u^\prime(y)}{x-y}\d y.
\end{align*}
Let $x\in(-1,1)$. Since
\begin{align*}
\int_{-1}^1\frac{u^\prime(y)}{x-y}\d y&=\int_{-1}^1\frac{|y|}{x-y}\d y=2x\log|x|-x\log(1-x^2),
\end{align*}
we see that
\begin{align*}
(-\Delta)^{1/2}u(x)=2x\log|x|-x\log(1-x^2)+\int_{\R\setminus(-1,1)}\frac{|y|\eta(y)+\frac{1}{2}y|y|\eta^\prime(y)}{x-y}\d y
\end{align*}
with the convention $0\log0=0$. This function is indeed no longer Lipschitz continuous around $x=0$. In fact, note that $x\log(1-x^2)$ as well as the integral term are smooth on $[-1/2,1/2]$. The remaining term $2x\log|x|$ is, however, not $C^{0,1}$ in any neighborhood of the origin, see \cite[p.\ 187]{FR22}.
\end{example}

It will be convenient to use Proposition \ref{propHarnack:HölderPucci} in the form of the following corollary. In fact, if $\alpha=1$ were an admissible choice in Proposition \ref{propHarnack:HölderPucci} for all $s$, Corollary \ref{corHarnack:HölderPucci} would be immediate. We note that this is needed because we prove our results at a generic scale without invoking a rescaling.  That is why this type of result is not typically a part of other works establishing similar results.  Below it will be used to propagate advantageous values of $\M_\rho^\pm$ across a small distance.

\begin{corollary} \label{corHarnack:HölderPucci}
Let $C,R>0$ and $\varphi\in C_c^2(\Rd)$ satisfy
\begin{enumerate}[label=(\roman*)]
\item $\supp\varphi\subseteq B_R$,
\item $\|\nabla\varphi\|_{L^\infty(\Rd)}\leq CR^{-1}$ and
\item $\|D^2\varphi\|_{L^\infty(\Rd)}\leq CR^{-2}$.
\end{enumerate}
Then there exist $\tilde{C}=\tilde{C}(s,\Lambda,C)>0$ and $\gamma=\gamma(s)>0$ such that
\begin{align*}
|\M_\rho^\pm\varphi(x)-\M_\rho^\pm\varphi(y)|\leq\frac{\tilde{C}}{R^{2s}}\left(\frac{|x-y|}{R}\right)^\gamma.
\end{align*}
\end{corollary}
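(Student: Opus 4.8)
The plan is to deduce the corollary from Proposition~\ref{propHarnack:HölderPucci} by a simple interpolation/rescaling bookkeeping. First I would normalize: set $\psi(z) = \varphi(Rz)$, so that $\supp\psi \subseteq B_1$, $\|\nabla\psi\|_{L^\infty} \leq C$, and $\|D^2\psi\|_{L^\infty}\leq C$. Since $\psi$ is compactly supported with bounded first and second derivatives, it lies in $C^{0,\alpha}(\Rd)$ for every $\alpha\in(0,1]$ and in $C^{1,\alpha}(\Rd)$ for every $\alpha\in(0,1]$, with the relevant seminorms controlled by a constant depending only on $C$ (and $d$). Indeed $[\psi]_{C^{0,\alpha}} \leq 2\|\psi\|_{L^\infty}^{1-\alpha}\|\nabla\psi\|_{L^\infty}^{\alpha}$ and $[\nabla\psi]_{C^{0,\alpha}}\leq (2\|\nabla\psi\|_{L^\infty})^{1-\alpha}\|D^2\psi\|_{L^\infty}^{\alpha}$, and $\|\psi\|_{L^\infty}\leq C$ because $\psi$ is supported in $B_1$ with $\|\nabla\psi\|_{L^\infty}\leq C$.

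Next I would apply Proposition~\ref{propHarnack:HölderPucci} to $\psi$, choosing the Hölder exponent depending on $s$: for $s<1/2$, pick any $\alpha\in(2s,1]$ and get $[\M_\rho^\pm\psi]_{C^{0,\alpha-2s}}\leq C[\psi]_{C^{0,\alpha}}\leq \tilde C$, so set $\gamma = \alpha - 2s > 0$; for $s>1/2$, pick any $\alpha\in(2s-1,1]$ and get $[\M_\rho^\pm\psi]_{C^{0,1+\alpha-2s}}\leq \tilde C$, so set $\gamma = 1+\alpha-2s > 0$; for $s=1/2$, pick any $\alpha\in(0,1)$ and get $[\M_\rho^\pm\psi]_{C^{0,\alpha}}\leq \tilde C$, so set $\gamma = \alpha$. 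In all three cases $\gamma = \gamma(s) > 0$ and
\[
|\M_\rho^\pm\psi(z) - \M_\rho^\pm\psi(w)| \leq \tilde C\, |z-w|^\gamma \quad\text{for all } z,w\in\Rd,
\]
with $\tilde C = \tilde C(s,\Lambda,C)$. Finally I would undo the rescaling: using the scaling relation $\M_\rho^\pm\varphi(x) = R^{-2s}(\M_{\rho/R}^\pm\psi)(x/R)$ — which holds because rescaling a kernel $K\in\K_{\rho}$ by $\tilde K(y) = R^{d+2s}K(Ry)$ produces a kernel in $\K_{\rho/R}$ with the same $\lambda,\Lambda$ (a direct check of \eqref{eqPre:LowerEllipticityAssumption}, \eqref{eqPre:UpperEllipticityAssumption}, \eqref{eqPre:SymmetryAssumption}), and this correspondence is a bijection so the infimum/supremum defining $\M^\pm$ transforms covariantly — we obtain
\[
|\M_\rho^\pm\varphi(x) - \M_\rho^\pm\varphi(y)| = R^{-2s}\,|\M_{\rho/R}^\pm\psi(x/R) - \M_{\rho/R}^\pm\psi(y/R)| \leq \frac{\tilde C}{R^{2s}}\left(\frac{|x-y|}{R}\right)^\gamma,
\]
since the bound from Proposition~\ref{propHarnack:HölderPucci} applied to $\psi$ is uniform over the truncation parameter (here $\rho/R$), the constant $C$ there not depending on it.

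The main obstacle — really the only non-bookkeeping point — is verifying that the scaling of the Pucci operators interacts correctly with the \emph{truncation parameter} $\rho$: one must confirm that $K \mapsto R^{d+2s}K(R\,\cdot\,)$ maps $\K_\rho$ onto $\K_{\rho/R}$ (not back onto $\K_\rho$), which is exactly the failure of scale invariance the authors flag in the introduction, and then observe that this is harmless because the constant in Proposition~\ref{propHarnack:HölderPucci} is independent of the truncation level. A small secondary point is the $s=1/2$ case, where $\alpha=1$ is genuinely excluded (as the preceding example shows), so one is forced to accept $\gamma<1$; this costs nothing here since any fixed $\gamma(s)>0$ suffices. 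If one prefers to avoid invoking the scaling of $\M_\rho^\pm$ explicitly, an alternative is to repeat the proof of Proposition~\ref{propHarnack:HölderPucci} directly for $\varphi$ at scale $R$, tracking the powers of $R$ by hand; but the rescaling argument is cleaner and reuses the already-established proposition verbatim.
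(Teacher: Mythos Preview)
Your argument is correct. It differs from the paper's proof in one essential respect: you rescale to unit scale and then invoke Proposition~\ref{propHarnack:HölderPucci} for $\psi$, whereas the paper applies Proposition~\ref{propHarnack:HölderPucci} directly to $\varphi$ at scale $R$ and tracks the powers of $R$ by hand in each of the three regimes (choosing $\alpha=1$ for $s\neq 1/2$ and $\alpha=1/2$ for $s=1/2$, then computing $[\varphi]_{C^{1,1/2}(\Rd)}$ from the support and derivative bounds). Your rescaling route is cleaner and treats all three cases uniformly, but it requires the side verification that $K\mapsto R^{d+2s}K(R\,\cdot\,)$ is a bijection $\K_\rho\to\K_{\rho/R}$ and the observation that the constant in Proposition~\ref{propHarnack:HölderPucci} is independent of the truncation parameter---both of which you correctly flag and handle. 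The paper's direct approach avoids touching the scaling of $\M_\rho^\pm$ altogether, which is in keeping with its stated philosophy of working at a generic scale throughout and not relying on any rescaling (see the remark immediately preceding the corollary). Either route is fine; yours is the one the paper explicitly mentions as an alternative at the end of your own proposal.
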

\begin{proof}
Assume first that $s<1/2$. By Proposition \ref{propHarnack:HölderPucci}\ref{itemHarnack:HölderPucciSmallS} for $\alpha=1$, there exists $C_1=C_1(s,\Lambda)>0$ such that, for all $x,y\in\Rd$,
\begin{align*}
|\M_\rho^\pm\varphi(x)-\M_\rho^\pm\varphi(y)|&\leq C_1[\varphi]_{C^{0,1}(\Rd)}|x-y|^{1-2s}=C_1\|\nabla\varphi\|_{L^\infty(\Rd)}|x-y|^{1-2s} \\
&\leq\frac{C_1C}{R}|x-y|^{1-2s}=\frac{C_1C}{R^{2s}}\left(\frac{|x-y|}{R}\right)^{1-2s},
\end{align*}
which proves that we can choose $\gamma=1-2s$ here.

Suppose now that $s>1/2$. Then we apply Proposition \ref{propHarnack:HölderPucci}\ref{itemHarnack:HölderPucciLargeS} for $\alpha=1$ and obtain
\begin{align*}
|\M_\rho^\pm\varphi(x)-\M_\rho^\pm\varphi(y)|&\leq C_1[\varphi]_{C^{1,1}(\Rd)}|x-y|^{2-2s}=C_1\|D^2\varphi\|_{L^\infty(\Rd)}|x-y|^{2-2s} \\
&\leq\frac{C_1C}{R^2}|x-y|^{2-2s}=\frac{C_1C}{R^{2s}}\left(\frac{|x-y|}{R}\right)^{2-2s}
\end{align*}
for any $x,y\in\Rd$, that is, we take $\gamma=2-2s$.

Finally, for $s=1/2$, we simply take $\alpha=1/2$ and use Proposition \ref{propHarnack:HölderPucci}\ref{itemHarnack:HölderPucciCriticalS} to get
\begin{align*}
|\M_\rho^\pm\varphi(x)-\M_\rho^\pm\varphi(y)|\leq C_2[\varphi]_{C^{1,1/2}(\Rd)}|x-y|^{1/2}
\end{align*}
for some $C_2=C_2(\Lambda)>0$. The seminorm on the right hand side can be computed in the following way. In $B_{2R}$, we have
\begin{align*}
\sup_{\substack{x,y\in B_{2R}\\x\neq y}}\frac{|\nabla\varphi(x)-\nabla\varphi(y)|}{|x-y|^{1/2}}\leq\sup_{\substack{x,y\in\Rd\\ 0<|x-y|<4R}}\frac{\|D^2\varphi\|_{L^\infty(\Rd)}|x-y|}{|x-y|^{1/2}}\leq\frac{C}{R^2}(4R)^{1/2}=\frac{2C}{R^{3/2}}.
\end{align*}
On the other hand, if at least one of the two points does not belong to $B_{2R}$, in particular, does not belong to the support of $\varphi$, we get
\begin{align*}
\sup_{\substack{x,y\in\Rd\\x\neq y\\ x\notin B_{2R}\ \text{or}\ y\notin B_{2R}}}\frac{|\nabla\varphi(x)-\nabla\varphi(y)|}{|x-y|^{1/2}}=\sup_{\substack{x,y\in\Rd\\x\in B_R\\y\notin B_{2R}}}\frac{|\nabla\varphi(x)|}{|x-y|^{1/2}}\leq\|\nabla\varphi\|_{L^\infty(\Rd)}\sup_{\substack{x,y\in\Rd\\|x-y|\geq R}}|x-y|^{-1/2}\leq\frac{C}{R^{3/2}}
\end{align*}
because $\nabla\varphi$ vanishes outside of $B_R$. Hence, we found that
\begin{align*}
|\M_\rho^\pm\varphi(x)-\M_\rho^\pm\varphi(y)|\leq\frac{2CC_2}{R^{3/2}}|x-y|^{1/2}=\frac{2CC_2}{R}\left(\frac{|x-y|}{R}\right)^{1/2}
\end{align*}
for all $x,y\in\Rd$. In other words, the claim is proven for $\gamma=1/2$ in this case.

Taking $\tilde{C}=\max\{C_1C,2CC_2\}=\tilde{C}(s,\Lambda,C)>0$ finishes the proof.
\end{proof}

We have now collected all necessary facts to prove the weak Harnack inequality.  This proof starts out with the same structure from \cite{Sil14}, and then goes into additional technical arguments due to the generic scale.

\begin{proof}[Proof of Theorem \ref{thmHarnack:WeakHarnackDrift}]
If $u\equiv 0$ in $\Rd\times(I_R\setminus I_{R/2})$, then the claim is fulfilled trivially. Assume that $u>0$ somewhere. Let us first claim that it suffices to prove \eqref{eqHarnack:Result} for functions $u$ that satisfy
\begin{align}\label{eqHarnack:WlogAssumption}
\int_{I_R\setminus I_{R/2}}\intd\frac{u(x,t)}{(R+|x|)^{d+2s}}\d x\d t=1.
\end{align}
By lower semicontinuity, there exists a set of positive measure in $\Rd\times(I_R\setminus I_{R/2})$, where $u>0$. Now we can consider the function
\begin{align*}
\tilde{u}\coloneqq\frac{u}{\int_{I_R\setminus I_{R/2}}\intd\frac{u(x,t)}{(R+|x|)^{d+2s}}\d x\d t}.
\end{align*}
This function is also globally bounded and lower semicontinuous in $Q_R$. Moreover, since the equation is positively homogeneous, $\tilde{u}$ solves
\begin{align*}
\partial_t\tilde{u}+\Lambda|\nabla\tilde{u}|-\M_\rho^-\tilde{u}\geq -R^{-2s}\frac{a}{\int_{I_R\setminus I_{R/2}}\intd\frac{u(x,t)}{(R+|x|)^{d+2s}}\d x\d t}\enspace\text{in}\enspace Q_R
\end{align*}
in the viscosity sense. As $\tilde{u}$ satisfies \eqref{eqHarnack:WlogAssumption}, we could conclude
\begin{align*}
\inf_{Q_{R/2}}\tilde{u}\geq c-\frac{a}{\int_{I_R\setminus I_{R/2}}\intd\frac{u(x,t)}{(R+|x|)^{d+2s}}\d x\d t},
\end{align*}
which is equivalent to \eqref{eqHarnack:Result}. Hence, we will now assume without loss of generality that \eqref{eqHarnack:WlogAssumption} is true. The assertion reduces to
\begin{align*}
\inf_{Q_{R/2}}u\geq c-a
\end{align*}
for some universal constant $c>0$. Choose $\theta=\theta(d,s)>0$ sufficiently small such that
\begin{align}\label{eqHarnack:ChoiceTheta}
\int_{I_R\setminus I_{R/2}}\intd\frac{\theta}{(R+|x|)^{d+2s}}\d x\d t\leq\frac{1}{2}.
\end{align}
Indeed, this is choice can be taken independently of $R$ because
\begin{align*}
\int_{I_R\setminus I_{R/2}}\intd\frac{\d x\d t}{(R+|x|)^{d+2s}}\leq R^{2s}\intd\frac{\d x}{(R+\abs{x})^{d+2s}}\leq\omega_d R^{2s}\int_R^\infty r^{-1-2s}\d r=\frac{\omega_d}{2s},
\end{align*}
where $\omega_d$ denotes the surface measure of the unit sphere in $\Rd$. Thus, \eqref{eqHarnack:WlogAssumption} and \eqref{eqHarnack:ChoiceTheta} imply
\begin{align}\label{eqHarnack:ChoiceThetaConsequence}
\int_{I_R\setminus I_{R/2}}\intd\frac{(u(x,t)-\theta)_+}{(R+|x|)^{d+2s}}\d x\d t\geq\frac{1}{2}.
\end{align}
Fix some constants $c_0,C_1>0$ and let $m\colon\bar{I}_R\to\R$ be the solution of the following ODE
\begin{align}\label{eqHarnack:ODE}
\begin{dcases}
	m'(t)=c_0\intd\frac{(u(x,t)-\theta)_+}{(R+|x|)^{d+2s}}\d x-C_1R^{-2s}m(t),\ &t\in I_R\setminus I_{R/2}, \\
	m'(t)=-C_1R^{-2s}m(t), \ &t\in I_{R/2}, \\
	m(-R^{2s})=0 
\end{dcases}
\end{align}
The function $m$ accumulates the spatial mass of $u$ over time to build a lower bound. It is easy to check that the solution is explicitly given by
\begin{align*}
m(t)=\begin{dcases}
	c_0\int_{-R^{2s}}^t\intd\frac{(u(x,\tau)-\theta)_+}{(R+|x|)^{d+2s}}\exp\left(-\frac{C_1}{R^{2s}}(t-\tau)\right)\d x\d\tau,\ &t\in\bar{I}_R\setminus I_{R/2}, \\
	m\left(-(R/2)^{2s}\right)\exp\left(-\frac{C_1}{R^{2s}}\left(t+(R/2)^{2s}\right)\right),\ &t\in I_{R/2}.
\end{dcases}
\end{align*}
Note that \eqref{eqHarnack:WlogAssumption} gives that, for $t\in\overline{I}_R\setminus I_{R/2}$,
\begin{align*}
m(t)\leq c_0\int_{I_R\setminus I_{R/2}}\intd\frac{u(x,\tau)}{(R+|x|)^{d+2s}}\d x\d\tau=c_0,
\end{align*}
whence
\begin{align*}
m(t)=m\left(-(R/2)^{2s}\right)\exp\left(-\frac{C_1}{R^{2s}}\left(t+(R/2)^{2s}\right)\right)\leq c_0
\end{align*}
for $t\in I_{R/2}$ as well. We will choose $c_0\leq\theta$ to achieve that $m\leq\theta$ in $\overline{I}_R$. We can also find a lower bound for $m$ on $I_{R/2}$. Indeed, by \eqref{eqHarnack:ChoiceThetaConsequence}, we find
\begin{align*}
m\left(-(R/2)^{2s}\right)&=c_0\int_{-R^{2s}}^{-(R/2)^{2s}}\intd\frac{u(x,\tau)-\theta)_+}{(R+|x|)^{d+2s}}\exp\left(-\frac{C_1}{R^{2s}}\left(-(R/2)^{2s}-\tau\right)\right)\d x\d\tau \\
&\geq c_0\int_{I_R\setminus I_{R/2}}\intd\frac{u(x,\tau)-\theta)_+}{(R+|x|)^{d+2s}}\d x\d\tau\exp\left(-\frac{C_1}{R^{2s}}\left(-(R/2)^{2s}+R^{2s}\right)\right) \\
&\geq\frac{c_0}{2}\exp\left(-\frac{C_1}{R^{2s}}\left(R^{2s}-(R/2)^{2s}\right)\right)
\end{align*}
such that
\begin{align}
m(t)&\geq\frac{c_0}{2}\exp\left(-\frac{C_1}{R^{2s}}\left(R^{2s}-(R/2)^{2s}\right)\right)\exp\left(-\frac{C_1}{R^{2s}}\left(t+(R/2)^{2s}\right)\right) \nonumber \\
&=\frac{c_0}{2}\exp\left(-\frac{C_1}{R^{2s}}\left(R^{2s}+t\right)\right) \nonumber \\
&\geq\frac{c_0}{2}e^{-C_1}\label{eqHarnack:LowerBoundm}
\end{align}
for all $t\in I_{R/2}$.

Let $\beta\in C^\infty([0,\infty))$ be a function satisfying $0\leq\beta\leq 1$, $\beta\equiv 1$ on $[0,1/2]$, $\supp\beta=[0,1]$ and $-2\leq\beta'\leq 0$. We use $\beta$ to construct a bump function
\begin{align*}
\eta(x)\coloneqq\beta\left(\frac{|x|}{R}\right), \quad x\in\Rd.
\end{align*}
Note that $\eta\in C_c^\infty(\Rd)$ satisfies $0\leq\eta\leq 1$, $\eta\equiv 1$ in $\overline{B}_{R/2}$, $\supp\eta=\overline{B}_R$, $\|\nabla\eta\|_{L^\infty}\leq CR^{-1}$ and $\|D^2\eta\|_{L^\infty}\leq CR^{-2}$, where $C=C(d)>0$ is some constant. It now suffices to prove that
\begin{align*}
u(x,t)\geq m(t)\eta(x)-R^{-2s}(R^{2s}+t)a \quad\forall (x,t)\in Q_{R/2}
\end{align*}
because then it follows from \eqref{eqHarnack:LowerBoundm} that
\begin{align*}
\inf_{Q_{R/2}}u\geq\left(\inf_{I_{R/2}}m\right)\left(\inf_{B_{R/2}}\eta\right)-R^{-2s}\sup_{t\in I_{R/2}}\left(R^{2s}+t\right)a\geq\frac{c_0}{2}e^{-C_1}-a,
\end{align*}
which would finish the proof for $c=c_0e^{-C_1}/2$. Now, assume for contradiction that this assertion is false. Then, for sufficiently small $\varepsilon>0$, we get $u<\varphi$ at some point in $Q_{R/2}$, where $\varphi$ is defined as
\begin{align}\label{eqHarnack:DefinitionTest}
\varphi(x,t)\coloneqq m(t)\eta(x)-R^{-2s}\left(R^{2s}+t\right)a-\varepsilon
\end{align}
for $(x,t)\in\Rd\times\overline{I}_R$. Since $m(-R^{2s})=0$ and $\eta$ is supported in $\overline{B}_R$, we have $\varphi<0$ outside $Q_R$. By the assumption that $u\geq 0$ in $\Rd\times\overline{I}_R$, it is true that $u>\varphi$ outside $Q_R$. On the other hand, we must have $u<\varphi$ somewhere in $Q_{R/2}$. By semicontinuity and eventually adjusting $\varepsilon$, we can find a first crossing point $(x_0,t_0)\in Q_R$ such that, for $\varphi$ as in \eqref{eqHarnack:DefinitionTest},
\begin{align}\label{eqHarnack:FirstCrossingPoint}
u(x_0,t_0)=\varphi(x_0,t_0) \quad\text{and}\quad u(x,t)\geq\varphi(x,t) \enspace\text{for all}\enspace (x,t)\in\Rd\times(-R^{2s},t_0].
\end{align}
For complete presentation, we include the justification of (\ref{eqHarnack:FirstCrossingPoint}) in the appendix as Lemma \ref{lemmaAppendix:JustificationFirstCrossingPoint}. We have found that the function $\varphi$ is in fact a test function for \eqref{eqHarnack:EquationDrift} at $(x_0,t_0)$ and moreover, we obtain
\begin{multline}\label{eqHarnack:EquationTestFunction}
\partial_t\varphi(x_0,t_0)+\Lambda|\nabla\varphi(x_0,t_0)|-\M_\rho^-\varphi(x_0,t_0) \\
-\intd(u(x_0+y,t_0)-\varphi(x_0+y,t_0))\min\left\{\frac{\lambda}{|y|^{d+2s}},\frac{\lambda}{\rho^{d+2s}}\right\}\d y\geq -R^{-2s}a
\end{multline}
by Lemma \ref{lemmaPre:GlobalTestFunction}. Let us compute the appearing quantities. First, by definition of $\varphi$, we have
\begin{align*}
\partial_t\varphi(x_0,t_0)=m'(t_0)\eta(x_0)-R^{-2s}a\enspace\text{and}\enspace\nabla\varphi(x_0,t_0)=m(t_0)\nabla\eta(x_0)
\end{align*}
as well as
\begin{align*}
\M_\rho^-\varphi(x_0,t_0)&=\inf_{K\in\K_\rho}\intd\delta_y\varphi(x_0,t_0)K(y)\d y \\
&=\inf_{K\in\K_\rho}\intd m(t_0)\delta_y\eta(x_0)K(y)\d y=m(t_0)\M_\rho^-\eta(x_0).
\end{align*}
By construction and a change of variables, we can also obtain that
\begin{align*}
&\intd(u(x_0+y,t_0)-\varphi(x_0+y,t_0))\min\left\{\frac{\lambda}{|y|^{d+2s}},\frac{\lambda}{\rho^{d+2s}}\right\}\d y \\
&\quad =\intd(u(x,t_0)-\varphi(x,t_0))_+\min\left\{\frac{\lambda}{|x-x_0|^{d+2s}},\frac{\lambda}{\rho^{d+2s}}\right\}\d x \\
&\quad =\intd(u(x,t_0)-m(t_0)\eta(x)+R^{-2s}(R^{2s}+t_0)a+\varepsilon)_+\min\left\{\frac{\lambda}{|x-x_0|^{d+2s}},\frac{\lambda}{\rho^{d+2s}}\right\}\d x.
\end{align*}
Recall now that $m\leq\theta$, $\eta\leq 1$ and $t_0\geq -R^{2s}$. Hence, the fact that $R>\rho$ yields
\begin{align*}
&\intd(u(x_0+y,t_0)-\varphi(x_0+y,t_0))\min\left\{\frac{\lambda}{|y|^{d+2s}},\frac{\lambda}{\rho^{d+2s}}\right\}\d y \\
&\quad\geq\lambda\intd(u(x,t_0)-\theta)_+\inf_{y_0\in B_R}\min\left\{|x-y_0|^{-d-2s},\rho^{-d-2s}\right\}\d x \\
&\quad =\lambda\intd(u(x,t_0)-\theta)_+\min\left\{(R+|x|)^{-d-2s},\rho^{-d-2s}\right\}\d x \\
&\quad =\lambda\intd\frac{(u(x,t_0)-\theta)_+}{(R+|x|)^{d+2s}}\d x.
\end{align*}
Here we used that the infimum is attained for $y_0=-Rx/|x|\in\partial B_R$ for each $x\in\Rd\setminus\{0\}$. Substituting those computations back to \eqref{eqHarnack:EquationTestFunction}, we get
\begin{align}\label{eqHarnack:InequalityToContradict}
m'(t_0)\eta(x_0)+\Lambda m(t_0)|\nabla\eta(x_0)|-m(t_0)\M_\rho^-\eta(x_0)-\lambda\intd\frac{(u(x,t_0)-\theta)_+}{(R+|x|)^{d+2s}}\d x\geq 0.
\end{align}
In order to get a contradiction, we need to distinguish between the cases $t_0\in I_R\setminus I_{R/2}$ and $t_0\in I_{R/2}$. Assume first that $t_0\in I_R\setminus I_{R/2}$. By \eqref{eqHarnack:ODE}, \eqref{eqHarnack:InequalityToContradict} becomes
\begin{multline*}
\left(c_0\intd\frac{(u(x,t_0)-\theta)_+}{(R+|x|)^{d+2s}}\d x-C_1R^{-2s}m(t_0)\right)\eta(x_0) \\
+\Lambda m(t_0)|\nabla\eta(x_0)|-m(t_0)\M_\rho^-\eta(x_0)-\lambda\intd\frac{(u(x,t_0)-\theta)_+}{(R+|x|)^{d+2s}}\d x\geq 0,
\end{multline*}
or equivalently,
\begin{align*}
(c_0\eta(x_0)-\lambda)\intd\frac{(u(x,t_0)-\theta)_+}{(R+|x|)^{d+2s}}\d x-m(t_0)\left(C_1R^{-2s}\eta(x_0)-\Lambda|\nabla\eta(x_0)|+\M_\rho^-\eta(x_0)\right)\geq 0.
\end{align*}
Let us also choose $c_0\leq\lambda$, that is, $c_0\coloneqq\min\{\theta,\lambda\}=c_0(d,s,\lambda)>0$. By estimating $\eta(x_0)\leq 1$, we can get rid of the first summand. Moreover, it is clear that $m(t_0)>0$, otherwise this would contradict \eqref{eqHarnack:FirstCrossingPoint}. Hence, we arrive at
\begin{align}\label{eqHarnack:FinalInequality}
-C_1R^{-2s}\eta(x_0)\geq \M_\rho^-\eta(x_0)-\Lambda|\nabla\eta(x_0)|.
\end{align}
On the other hand, if $t_0\in I_{R/2}$, then \eqref{eqHarnack:ODE} and \eqref{eqHarnack:InequalityToContradict} yield
\begin{align*}
-C_1R^{-2s}m(t_0)\eta(x_0)+\Lambda m(t_0)|\nabla\eta(x_0)|-m(t_0)\M_\rho^-\eta(x_0)-\lambda\intd\frac{(u(x,t_0)-\theta)_+}{(R+|x|)^{d+2s}}\d x\geq 0.
\end{align*}
Since the integral is nonnegative, we can neglect it and divide again by $m(t_0)>0$ to obtain \eqref{eqHarnack:FinalInequality} also in this case. We need to choose $C_1>0$ large enough to find a contradiction to \eqref{eqHarnack:FinalInequality}. Let $C_2=C_2(d,s,\Lambda)>0$ be the constant from Lemma \ref{lemmaHarnack:BumpFunctionOperatorBound}. Fix some $\sigma\in(0,1)$ and assume that $x_0\in B_{\sigma R}$. By construction of $\eta$ and $\beta$, we know that then $\eta(x_0)\geq\eta_0$, where
\begin{align}\label{eqHarnack:DefEta0}
\eta_0\coloneqq\inf_{B_{\sigma R}}\eta=\inf_{x\in B_{\sigma R}}\beta\left(\frac{|x|}{R}\right)=\beta\left(\frac{\sigma R}{R}\right)=\beta(\sigma)=\eta_0(\sigma)>0.
\end{align}
Notice that we will choose $\sigma$ at the very end of the proof. Since $s\geq 1/2$, we have
\begin{align*}
\|\nabla\eta\|_{L^\infty(B_R)}\leq CR^{-1}=C\left(\frac{R}{R_0}\right)^{-1}R_0^{-1}\leq C\left(\frac{R}{R_0}\right)^{-2s}R_0^{-1}=CR_0^{2s-1}R^{-2s}
\end{align*}
as $R\leq R_0$. Hence,
\begin{align*}
-C_1R^{-2s}\eta(x_0)&\leq -C_1R^{-2s}\eta_0< -\left(C_2+C\Lambda R_0^{2s-1}\right)R^{-2s} \\
&\leq -\|\M_\rho^-\eta\|_{L^\infty(\Rd)}-\Lambda\|\nabla\eta\|_{L^\infty(B_R)}\leq\M_\rho^-\eta(x_0)-\Lambda|\nabla\eta(x_0)|,
\end{align*}
provided we have chosen
\begin{align*}
C_1=1+\frac{C_2+C\Lambda R_0^{2s-1}}{\eta_0}=C_1(d,s,\Lambda,R_0,\sigma)>0.
\end{align*}
This is a contradiction to \eqref{eqHarnack:FinalInequality}.

We are now left to deal with the case $x_0\in B_R\setminus B_{\sigma R}$. We will deduce a lower bound for $\M_\rho^-\eta$ on $\partial B_R$ and apply Corollary \ref{corHarnack:HölderPucci}. For that, denote
\begin{align}\label{eqHarnack:DefinitionMu}
\mu\coloneqq 2^{-d-2s}\lambda\int_{B_1}\beta(|y|)\d y=\mu(d,s,\lambda)>0.
\end{align}
By \eqref{eqPre:LowerEllipticityAssumption} and the fact that $R>\rho$, we get that
\begin{align}
\min_{\partial B_R}\M_\rho^-\eta&=\min_{x\in\partial B_R}\inf_{K\in\K_\rho}\intd\eta(x+y)K(y)\d y=\min_{x\in\partial B_R}\inf_{K\in\K_\rho}\intd\beta\left(\frac{|x+y|}{R}\right)K(y)\d y \nonumber \\
&\geq\inf_{K\in\K_\rho}\int_{B_R}\inf_{x\in\partial B_R}\beta\left(\frac{|y|}{R}\right)K(y-x)\d y \nonumber \\
&\geq\int_{B_R}\beta\left(\frac{|y|}{R}\right)\min_{x\in\partial B_R}\lambda\min\left\{|y-x|^{-d-2s},\rho^{-d-2s}\right\}d y \nonumber \\
&\geq\int_{B_R}\beta\left(\frac{|y|}{R}\right)\lambda\min\left\{(2R)^{-d-2s},\rho^{-d-2s}\right\}d y \nonumber \\
&=\int_{B_R}\beta\left(\frac{|y|}{R}\right)\frac{\lambda}{(2R)^{d+2s}}d y=\int_{B_1}\beta(|y|)R^{-2s}\frac{\lambda}{2^{d+2s}}\d y=\mu R^{-2s}.\label{eqHarnack:LowerBoundPucci}
\end{align}
Let us now apply Corollary \ref{corHarnack:HölderPucci} to $\eta$, that is, there exist $C_3=C_3(d,s,\Lambda)>0$ and $\gamma=\gamma(s)>0$ such that
\begin{align}\label{eqHarnack:PucciHölder}
|\M_\rho^-\eta(x)-\M_\rho^-\eta(y)|\leq\frac{C_3}{R^{2s}}\left(\frac{|x-y|}{R}\right)^{\gamma}
\end{align}
for all $x,y\in\Rd$. Since $y\coloneqq x_0\neq 0$, we can choose $x\coloneqq Rx_0/|x_0|\in\partial B_R$ because then
\begin{align*}
\mu R^{-2s}-\M_\rho^-\eta(x_0)\leq|\M_\rho^-\eta(x)-\M_\rho^-\eta(x_0)|\leq\frac{C_3}{R^{2s}}\left(\frac{|x-x_0|}{R}\right)^\gamma=\frac{C_3}{R^{2s}}\left(\frac{R-|x_0|}{R}\right)^\gamma
\end{align*}
by \eqref{eqHarnack:LowerBoundPucci} and \eqref{eqHarnack:PucciHölder}. Choosing $\sigma\geq 1-(\mu/(2C_3))^{1/\gamma}$ yields
\begin{align}
\M_\rho^-\eta(x_0)&\geq\mu R^{-2s}-\frac{C_3}{R^{2s}}\left(\frac{R-|x_0|}{R}\right)^\gamma\geq\mu R^{-2s}-\frac{C_3}{R^{2s}}\left(\frac{R-\sigma R}{R}\right)^\gamma \nonumber \\
&=\mu R^{-2s}-\frac{C_3}{R^{2s}}(1-\sigma)^\gamma\geq\mu R^{-2s}-\frac{\mu}{2}R^{-2s}=\frac{\mu}{2}R^{-2s}.\label{eqHarnack:FinalEstimatePucci}
\end{align}
We can proceed with $|\nabla\eta|$ in the same way. Here, $|\nabla\eta(x)|=0$ as $x\in\partial B_R$ such that, by construction,
\begin{align*}
|\nabla\eta(x_0)|=\big||\nabla\eta(x)|-|\nabla\eta(x_0)|\big|\leq\|D^2\eta\|_{L^\infty(\Rd)}|x-x_0|\leq\frac{C}{R^2}(R-|x_0|).
\end{align*}
Now, we will also impose $\sigma\geq 1-\mu/(4C\Lambda R_0^{2s-1})$ to obtain
\begin{align}\label{eqHarnack:FinalEstimateDrift}
\Lambda|\nabla\eta(x_0)|\leq\Lambda\frac{C}{R^2}(R-\sigma R)\leq\frac{\mu}{4R_0^{2s-1}}R^{-1}=\frac{\mu}{4R_0^{2s}}\left(\frac{R}{R_0}\right)^{-1}\leq\frac{\mu}{4R_0^{2s}}\left(\frac{R}{R_0}\right)^{-2s}=\frac{\mu}{4}R^{-2s}
\end{align}
due to $R\leq R_0$ and $s\geq 1/2$. Note that $\sigma=\sigma(d,s,\lambda,\Lambda,R_0)\in(0,1)$ and we found that, by \eqref{eqHarnack:FinalEstimatePucci} and \eqref{eqHarnack:FinalEstimateDrift},
\begin{align*}
\M_\rho^-\eta(x_0)-\Lambda|\nabla\eta(x_0)|\geq\frac{\mu}{2}R^{-2s}-\frac{\mu}{4}R^{-2s}>0\geq -C_1R^{-2s}\eta(x_0),
\end{align*}
which contradicts \eqref{eqHarnack:FinalInequality} such that the proof is completed. Note finally that $c_0$ and $C_1$ depend only on $d,s,\lambda,\Lambda$ and $R_0$ such that $c$ does so.
\end{proof}

As one can see, the proof above is more involved than the one in \cite[Theorem 5.1]{Sil14}. The main difficulty arises in choosing the parameter $\eta_0$ appropriately. On unit scale, it was sufficient to note that $\M_\rho^-\eta$ is positive at a global minimum such that by continuity, $\eta_0$ could be chosen sufficiently small to ensure that $\M_\rho^-\eta(x_0)$ is still positive in the case $\eta(x_0)<\eta_0$. We emphasize that in \cite{Sil14}, $\eta_0$ then only depended on the choice of the bump function, whereas in our case, it would also depend on $R$ and $\rho$. As a consequence, we needed  to study mapping properties of $\M_\rho^\pm$ that are robust as $\rho\to 0^+$. An additional necessity was to choose the bump function to gain lower bounds on $\M_\rho^-\eta$ independently of $\rho$ as in \eqref{eqHarnack:LowerBoundPucci}.

If the drift term is not present, the proof simplifies and additionally, we are able to deduce weak Harnack inequality for $s<1/2$.

\begin{proof}[Proof of Theorem \ref{thmHarnack:WeakHarnackNoDrift}]
We will perform the same proof as for Theorem \ref{thmHarnack:WeakHarnackDrift} except for neglecting the term $\Lambda|\nabla u|$. There will be no differences in the first half of the proof. One can easily check that we can get to \eqref{eqHarnack:FinalInequality}, which reads
\begin{align}\label{eqHarnack:InequalityToContradictNoDrift}
-C_1R^{-2s}\eta(x_0)\geq\M_\rho^-\eta(x_0)
\end{align}
in this scenario. Recall that we have to choose $C_1=C_1(d,s,\lambda,\Lambda)>0$ to obtain a contradiction to \eqref{eqHarnack:InequalityToContradictNoDrift}. Define, as before, $\eta_0$ by \eqref{eqHarnack:DefEta0}, where $\sigma\in(0,1)$ needs to be chosen below. If $\eta(x_0)\geq\eta_0$, then
\begin{align*}
-C_1R^{-2s}\eta(x_0)\leq -C_1R^{-2s}\eta_0<-C_2R^{-2s}\leq -\|\M_\rho^-\eta\|_{L^\infty(\Rd)}\leq\M_\rho^-\eta(x_0)
\end{align*}
if we choose $C_1>C_2/\eta_0$, which already provides a contradiction to \eqref{eqHarnack:InequalityToContradictNoDrift}. Here, $C_2=C_2(d,s,\Lambda)>0$ is the constant appearing in Lemma \ref{lemmaHarnack:BumpFunctionOperatorBound}. Let us consider $\eta(x_0)<\eta_0$ and choose $\sigma\in(0,1)$ sufficiently large to deduce a contradiction, as well. By Corollary \ref{corHarnack:HölderPucci}, we have
\begin{align}\label{eqHarnack:HölderEstimatePucci}
|\M_\rho^-\eta(x)-\M_\rho^-\eta(y)|\leq\frac{C_3}{R^{2s}}\left(\frac{|x-y|}{R}\right)^\gamma
\end{align}
for any $x,y\in\Rd$, some $C_3=C_3(s,\Lambda)>0$ and some $\gamma=\gamma(s)>0$. We apply this again for $y\coloneqq x_0$ and $x\coloneqq Rx_0/|x_0|\in\partial B_R$ such that \eqref{eqHarnack:LowerBoundPucci} and \eqref{eqHarnack:HölderEstimatePucci} yield
\begin{align*}
\mu R^{-2s}-\M_\rho^-\eta(x_0)\leq|\M_\rho^-\eta(x)-\M_\rho^-\eta(x_0)|\leq\frac{C_3}{R^{2s}}\left(\frac{|x-x_0|}{R}\right)^\gamma=\frac{C_3}{R^{2s}}\left(\frac{R-|x_0|}{R}\right)^\gamma
\end{align*}
with $\mu$ as in \eqref{eqHarnack:DefinitionMu}. Now we are in a good position to choose $\sigma$. If $\sigma\geq 1-(\mu/(2C_3))^{1/\gamma}$, we arrive at
\begin{align*}
\M_\rho^-\eta(x_0)&\geq\mu R^{-2s}-\frac{C_3}{R^{2s}}\left(\frac{R-|x_0|}{R}\right)^\gamma\geq\mu R^{-2s}-\frac{C_3}{R^{2s}}\left(\frac{R-\sigma R}{R}\right)^\gamma \\
&\geq\mu R^{-2s}-\frac{\mu}{2}R^{-2s}>0\geq -C_1R^{-2s}\eta(x_0),
\end{align*}
which contradicts \eqref{eqHarnack:InequalityToContradictNoDrift} as desired.
\end{proof}

\begin{remark}
The constant $c$ in Theorem \ref{thmHarnack:WeakHarnackDrift} and Theorem \ref{thmHarnack:WeakHarnackNoDrift} only depends on an upper bound, $R_0$, of the radius if a drift term is part of the equation. The reason comes from different scaling properties of the gradient and of the nonlocal operators. We have seen that, for $\varphi\in C_b^2(\Rd)$ as in Lemma \ref{lemmaHarnack:BumpFunctionOperatorBound},
\begin{align*}
\|\M_\rho^\pm\varphi\|_{L^\infty}\lesssim R^{-2s} \enspace\text{and}\enspace \|\nabla\varphi\|_{L^\infty}\lesssim R^{-1}.
\end{align*}
Recall that we want to obtain a weak Harnack inequality in small cylinders. However, if $s<1/2$, then $R^{-1}\ll R^{-2s}$ for small $R$, which explains why we cannot include a drift in this case because the drift dominates the equation in this sense. However, if $s>1/2$, then $R^{-2s}\ll R^{-1}$ for small radii. Therefore, as we have seen, we are able to control the contribution of the drift in order to deduce weak Harnack inequality. For large radii, we get $R^{-2s}\ll R^{-1}$ and the same issue as before occurs such that the constant $c$ in \eqref{eqHarnack:Result} would explode as $R\to\infty$. Note that it can be seen from the proof of Theorem \ref{thmHarnack:WeakHarnackDrift} that $c$ depends only on $R_0$ via the quantity $R_0^{2s-1}$, which means that, for $s=1/2$, the dependence on $R_0$ disappears. Indeed, this is clear, since the scaling order of the drift and the nonlocal operator are identical.
\end{remark}

We will use the weak Harnack inequality in the following form, which is a direct consequence as $u$ is globally nonnegative.

\begin{corollary}\label{corHarnack:HarnackCorollary}
Let $a,R_0>0$, $R\in(\rho,R_0]$ and $u\in L^\infty(\Rd\times I_R)$ with $u\geq 0$ in $\Rd\times\overline{I}_R$. Suppose that either
\begin{enumerate}[label=(\alph*)]
\item $s\geq 1/2$ and $u$ satisfies \eqref{eqHarnack:EquationDrift} in the viscosity sense or\label{itemHarnack:CorHarnackDrift}
\item $u$ satisfies \eqref{eqHarnack:EquationNoDrift} in the viscosity sense.\label{itemHarnack:CorHarnackNoDrift}
\end{enumerate}
Then
\begin{align*}
\inf_{Q_{R/2}}u\geq c\fint_{I_R\setminus I_{R/2}}\fint_{B_R}u(x,t)\d x\d t-a
\end{align*}
for some constant $c>0$ that depends on $d,s,\lambda$ and $\Lambda$ and in the case \ref{itemHarnack:CorHarnackDrift} also on $R_0$. Here we use the notation
\begin{align*}
\fint_Af(x)\d x\coloneqq\frac{1}{|A|}\int_Af(x)\d x.
\end{align*}
\end{corollary}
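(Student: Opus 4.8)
The plan is to obtain Corollary~\ref{corHarnack:HarnackCorollary} as an elementary consequence of Theorems~\ref{thmHarnack:WeakHarnackDrift} and~\ref{thmHarnack:WeakHarnackNoDrift}. In case~\ref{itemHarnack:CorHarnackDrift} one applies Theorem~\ref{thmHarnack:WeakHarnackDrift} and in case~\ref{itemHarnack:CorHarnackNoDrift} one applies Theorem~\ref{thmHarnack:WeakHarnackNoDrift}; in both cases the hypotheses assumed here are exactly those required by the respective theorem, so \eqref{eqHarnack:Result} holds with a constant $c>0$ depending only on $d,s,\lambda,\Lambda$ (and, in the drift case, on $R_0$). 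It then remains to pass from the weighted integral on the right-hand side of \eqref{eqHarnack:Result} to the average over $Q_R\setminus Q_{R/2}$.

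For this step I use that $u\geq 0$ on all of $\Rd\times\overline{I}_R$, so that the integrand in $\int_{I_R\setminus I_{R/2}}\intd u(x,t)(R+|x|)^{-d-2s}\,\d x\,\d t$ is nonnegative and the spatial domain of integration may be shrunk from $\Rd$ to $B_R$; on $B_R$ one has $R+|x|<2R$, hence $(R+|x|)^{-d-2s}\geq(2R)^{-d-2s}$, and therefore
\begin{align*}
\int_{I_R\setminus I_{R/2}}\intd\frac{u(x,t)}{(R+|x|)^{d+2s}}\,\d x\,\d t\geq\frac{1}{(2R)^{d+2s}}\int_{I_R\setminus I_{R/2}}\int_{B_R}u(x,t)\,\d x\,\d t.
\end{align*}
Next I rewrite the last integral as $|I_R\setminus I_{R/2}|\,|B_R|$ times the average $\fint_{I_R\setminus I_{R/2}}\fint_{B_R}u$ and insert the explicit values $|I_R\setminus I_{R/2}|=(1-2^{-2s})R^{2s}$ and $|B_R|=\omega_d R^d/d$, with $\omega_d$ the surface measure of the unit sphere as in the proof of Theorem~\ref{thmHarnack:WeakHarnackDrift}. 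All powers of $R$ cancel, leaving a purely dimensional prefactor $(1-2^{-2s})\omega_d/(d\,2^{d+2s})$ in front of the average. Combining this with \eqref{eqHarnack:Result} and renaming $c\mapsto c\,(1-2^{-2s})\omega_d/(d\,2^{d+2s})$ yields the claimed inequality.

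There is no genuine obstacle here: the substance of the corollary is already contained in the two weak Harnack theorems, and the only point to monitor is that replacing the weight by its lower bound on $B_R$ and tallying the explicit volume constants introduces nothing that grows with $R$ or degenerates as $\rho\to 0^+$. Since every $R$-dependence cancels and $\rho$ never enters these estimates, the resulting constant depends only on $d,s,\lambda,\Lambda$, and on $R_0$ precisely when the drift case is invoked, as asserted.
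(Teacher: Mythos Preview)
Your proof is correct and follows exactly the route the paper intends: the paper states the corollary as ``a direct consequence as $u$ is globally nonnegative'' without writing out the details, and your argument---restricting the spatial integral to $B_R$ by nonnegativity, bounding the weight $(R+|x|)^{-d-2s}$ from below by $(2R)^{-d-2s}$, and checking that the resulting volume factors cancel all powers of $R$---is precisely that direct consequence.
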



\section{Partial Hölder regularity}\label{sec:Hoelder}

Finally, we are able to deduce partial Hölder regularity of solutions. The partial nature of this estimate arises if $\rho>0$ because we do not have the weak Harnack inequality on arbitrary small scales, which means that we have to abort the upcoming oscillation iteration at some step. The result will be the following.

\begin{theorem}\label{thmHoelder:PartialHoelderAt0}
Let $A\geq 0$, $R_0>0$, $R\in(\rho,R_0]$ and $u\in L^\infty(\Rd\times I_R)$. Suppose that either
\begin{enumerate}[label=(\alph*)]
\item $s\geq 1/2$ and $u$ satisfies
\begin{align}\label{eqHoelder:EquationDrift}
\partial_tu-\Lambda|\nabla u|-\M_\rho^+u\leq A \enspace\text{and}\enspace \partial_tu+\Lambda|\nabla u|-\M_\rho^-u\geq -A \enspace\text{in}\enspace Q_R
\end{align}
in the viscosity sense or\label{itemHoelder:PartialHoelderAt0Drift}
\item $u$ satisfies
\begin{align}\label{eqHoelder:EquationNoDrift}
\partial_tu-\M_\rho^+u\leq A \enspace\text{and}\enspace \partial_tu-\M_\rho^-u\geq -A \enspace\text{in}\enspace Q_R
\end{align}
in the viscosity sense.\label{itemHoelder:PartialHoelderAt0NoDrift}
\end{enumerate}
Then there exist $\alpha\in(0,2s)$ and $C>0$ depending only on $d,s,\lambda$ and $\Lambda$ such that
\begin{align}\label{eqHoelder:Result}
R^\alpha\frac{|u(x,t)-u(0,0)|}{d((x,t),(0,0))^\alpha}\leq C\left(\|u\|_{L^\infty(\Rd\times I_R)}+R^{2s}A\right)
\end{align}
for any $(x,t)\in Q_R\setminus Q_\rho$. In the case \ref{itemHoelder:PartialHoelderAt0Drift}, $\alpha$ and $C$ also depend on $R_0$.
\end{theorem}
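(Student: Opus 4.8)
The plan is to run the classical Krylov--Safonov oscillation iteration at the corner point $(0,0)$, using the weak Harnack inequality of Section \ref{sec:Harnack} on the shrinking family of ``corner'' cylinders $Q_{R_k}$, $R_k\coloneqq 4^{-k}R$ --- all of which contain $(0,0)$, since it is their top at $t=0$. After dividing $u$ by $\|u\|_{L^\infty(\Rd\times I_R)}+R^{2s}A$ (legitimate because the two extremal inequalities in \eqref{eqHoelder:EquationDrift} and \eqref{eqHoelder:EquationNoDrift} are positively homogeneous and $\M_\rho^\pm$ annihilates constants), I would prove by induction on $k$ --- for every $k$ with $R_k>2\rho$ --- the existence of reals $m_k\le M_k$ with $m_k\le u\le M_k$ on $Q_{R_k}$ and $M_k-m_k\le\omega_k$, where $\omega_k\coloneqq K_0\nu^k$ for a large universal $K_0$ and a $\nu\in(0,1)$ close to $1$ to be fixed, and $\alpha$ is then defined by $\nu=4^{-\alpha}$. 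Granting this, for $(x,t)$ with $\rho<d((x,t),(0,0))\le R$ one takes the largest admissible $k$ with $R_k>d((x,t),(0,0))$, so that $(x,t),(0,0)\in Q_{R_k}$ and $|u(x,t)-u(0,0)|\le\omega_k=K_0(R_k/R)^\alpha\le C\bigl(d((x,t),(0,0))/R\bigr)^\alpha$, which is \eqref{eqHoelder:Result} after undoing the normalization; the restriction $(x,t)\in Q_R\setminus Q_\rho$ is exactly where the iteration has had to be aborted, since Corollary \ref{corHarnack:HarnackCorollary} requires the working scale to exceed $\rho$ --- this is the \emph{partial} nature of the estimate.

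For the inductive step I would argue as follows. Both $u-m_k$ and $M_k-u$ are nonnegative on $Q_{R_k}$ and are viscosity supersolutions of $\partial_tv+\Lambda|\nabla v|-\M_\rho^-v\ge -A$ in $Q_R$ --- for $M_k-u$ one uses $\M_\rho^-(M_k-u)=-\M_\rho^+u$ together with the subsolution inequality. A measure dichotomy at the midpoint $(M_k+m_k)/2$ over the set $S\coloneqq(I_{R_k/2}\setminus I_{R_k/4})\times B_{R_k/2}\subset Q_{R_k}$ shows that one of these, say $v\coloneqq u-m_k$, exceeds $\frac{1}{2}(M_k-m_k)$ on a subset of $S$ of measure $\ge\frac{1}{2}|S|$. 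To feed $v$ into Corollary \ref{corHarnack:HarnackCorollary} at scale $r\coloneqq R_k/2$ one needs global nonnegativity, which is obtained by the usual device of passing to a suitable globally nonnegative modification of $v$ agreeing with $v$ on $\overline{Q}_{R_k}$, whose negative ``tail'' outside is controlled --- via the inductive bounds $u\ge m_j$ on $Q_{R_j}$ --- by $\omega_j$ on the annulus $B_{R_j}\setminus B_{R_{j+1}}$ and by $\omega_0$ beyond $B_R$. Applying Corollary \ref{corHarnack:HarnackCorollary} (part \ref{itemHarnack:CorHarnackDrift} in case \ref{itemHoelder:PartialHoelderAt0Drift}, part \ref{itemHarnack:CorHarnackNoDrift} in case \ref{itemHoelder:PartialHoelderAt0NoDrift}; admissible since $\rho<r\le R_0$) produces a lower bound for $\inf_{Q_{R_{k+1}}}(u-m_k)$, $R_{k+1}=R_k/4$, by a fixed multiple of the average of $u-m_k$ over $S$ minus an error term coming from the right hand side $A$ and from the tail.

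Bounding that average below by $c'(M_k-m_k)$ using the good set, and estimating the error through \eqref{eqPre:UpperEllipticityAssumption} together with the geometric decay $\omega_j=K_0\nu^j$ with $\nu=4^{-\alpha}$, $\alpha<2s$, so that the relevant dyadic sum over the annuli $B_{R_j}\setminus B_{R_{j+1}}$ converges and the error does not exceed $\frac{1}{2}c'(M_k-m_k)$ in the regime where $M_k-m_k$ dominates $\omega_k$ (the complementary ``already small'' regime being handled trivially by keeping $m_{k+1}=m_k$, $M_{k+1}=M_k$), one obtains $M_{k+1}-m_{k+1}\le(1-\theta)(M_k-m_k)$ for a universal $\theta>0$; choosing $\nu\ge 1-\theta$, hence $\alpha=\log_4(1/\nu)$ small, and $K_0$ large enough to dominate the initial oscillation and absorb the remaining constants, closes the induction. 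Iterating while $R_k>2\rho$ and reading off the Hölder bound at $(0,0)$ as above completes the proof; in case \ref{itemHoelder:PartialHoelderAt0Drift} the dependence on $R_0$ enters only through the $R_0$-dependence of the constant in Corollary \ref{corHarnack:HarnackCorollary}\ref{itemHarnack:CorHarnackDrift}, and in case \ref{itemHoelder:PartialHoelderAt0NoDrift} it drops out.

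The hard part --- the genuinely new difficulty compared with the scale-invariant case $\rho=0$ of \cite{Sil14} --- is the tail bookkeeping in the inductive step: because $\M_\rho^\pm$ are not scale invariant once $\rho>0$, the weak Harnack must be invoked at a \emph{generic} scale $r\in(\rho,R_0]$ rather than at unit scale, and one must control the interaction of the truncated/tail part of the solution with a kernel that, for small $\rho$, carries an enormous mass near the diagonal, verifying that the resulting error in the weak Harnack is finite, summable across the dyadic annuli, and of the correct order $\omega_k r^{-2s}$, uniformly in $\rho$. This is precisely where the condition $\alpha<2s$, the inductively propagated oscillation bounds, and the scale-robust auxiliary estimates of Section \ref{sec:Harnack} all have to be used together; once this is in place, the iteration itself and the passage to the Hölder estimate at $(0,0)$ are routine.
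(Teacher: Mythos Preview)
Your overall strategy is exactly the paper's: normalize, run the oscillation iteration on the corner cylinders $Q_{R_k}$ with $R_k=4^{-k}R$, feed $(u-m_k)_+$ (or $(M_k-u)_+$) into Corollary~\ref{corHarnack:HarnackCorollary} at scale $R_k/2$ after a measure dichotomy, and control the error coming from the negative tail via the inductively propagated bounds. The paper uses the equality $M_k-m_k=4^{-k\alpha}$ rather than an inequality plus a small/large dichotomy, but that is cosmetic.

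There is, however, one genuine issue in your tail bookkeeping. You write that the negative part of $v=u-m_k$ on the annulus $B_{R_j}\setminus B_{R_{j+1}}$ is ``controlled by $\omega_j$'', and then claim the resulting error is $\le\tfrac12 c'(M_k-m_k)$. With only the bound $v_-\le\omega_j$ on annulus $j$, the dyadic tail sum against the kernel gives an error of size $\asymp C(\Lambda,s)\,\omega_k\,R_k^{-2s}$ with a \emph{fixed} universal constant that does \emph{not} go to zero as $\alpha\to0$ (equivalently $\nu\to1$); after the dichotomy $M_k-m_k>\nu\omega_k$ this becomes $\asymp C(\Lambda,s)\nu^{-1}(M_k-m_k)R_k^{-2s}$, and there is no mechanism to force this below the weak Harnack gain $c'(M_k-m_k)$, since $c'$ is typically tiny while $C(\Lambda,s)$ is not. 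The parameter $K_0$ does not help either: it absorbs the forcing $A$ (which decays like $4^{-2sk}$, strictly faster than $\omega_k$), but the tail error scales exactly like $\omega_k$ and $K_0$ cancels.

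The fix is the sharper bound the paper actually uses. Writing $u-m_k=(u-M_j)+(M_j-m_k)$ and using $m_j\le u\le M_j$ on $Q_{R_j}$ together with $M_j\ge M_k$, one gets
\[
(u-m_k)_-\ \le\ (M_j-m_j)-(M_k-m_k)
\]
on $Q_{R_j}$; after normalizing by $M_k-m_k$ this is $2\bigl(4^{(k-j)\alpha}-1\bigr)$ in the paper's notation. The ``$-1$'' is decisive: it makes the tail contribution
\[
2\Lambda\Bigl(\tfrac{8^\alpha}{1-2^{-(2s-\alpha)}}-\tfrac{1}{1-2^{-2s}}\Bigr)\bigl(4^{-k}R/2\bigr)^{-2s},
\]
which \emph{vanishes} as $\alpha\to0$, so one first fixes $\varepsilon$ from the weak Harnack constant and \emph{then} chooses $\alpha$ small to push the tail error below $\varepsilon/2$. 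With this correction (or, equivalently, by enforcing equality $M_k-m_k=4^{-k\alpha}$ as the paper does, which also removes the need for your small/large dichotomy), your argument closes and coincides with the paper's.
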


\noindent
In the context of Theorem \ref{thmIntro:InformalMainResult}, one would use $A=\norm{f}_{L^\infty}$.

Let us comment on the above result. First of all, it is not surprising that such an estimate holds for a fixed $R$ and $\rho$, since for any bounded function, it is true that
\begin{align*}
R^\alpha\frac{|u(x,t)-u(0,0)|}{d((x,t),(0,0))^\alpha}\leq R^\alpha\frac{2\|u\|_{L^\infty(Q_R)}}{\rho^\alpha}=2\left(\frac{R}{\rho}\right)^\alpha\|u\|_{L^\infty(Q_R)},
\end{align*}
for $(x,t)\in Q_R\setminus Q_\rho$ and any $\alpha\in(0,1)$, independently of the equation. However, this estimate blows up if $\rho\to 0^+$ or $R\to\infty$. The main point of the estimate \eqref{eqHoelder:Result} is the independence of $C$ and $\alpha$ with respect to $R$ and $\rho$.

It is standard in the non-divergence integro-differential regularity theory that the lower bound for $K\in\K_\rho$ is only required for small values of $\abs{y}$.  Once the lower bound holds for $\abs{y}$ small enough, one can modify the equation with a zero order term to artificially impose the lower bound for all $y$.  We make this note as a remark without proof.  See, for example, \cite[Section 14]{CaSi-09RegularityIntegroDiff}.

\begin{remark}
If one replaces the assumption \eqref{eqPre:LowerEllipticityAssumption} by
\begin{align*}
K(y)\geq\lambda\min\left\{|y|^{-d-2s},\rho^{-d-2s}\right\} \quad\text{for all}\ |y|<R_0
\end{align*}
for some $R_0>\rho$, then Theorem \ref{thmHoelder:PartialHoelderAt0} remains true for $R\in(\rho,R_0]$, where $\alpha$ and $C$ will depend also on $R_0$.
\end{remark}

\begin{remark}
Let us take a step back to general fully nonlinear equations of the form \eqref{eqPre:Equation} with $F$ as in \eqref{eqPre:FullyNonlinear}. It is immediate to check that any viscosity solution of \eqref{eqPre:Equation} automatically solves, in the viscosity sense, the corresponding extremal inequalities in \eqref{eqHoelder:EquationDrift} or \eqref{eqHoelder:EquationNoDrift}, and hence enjoys the partial Hölder regularity as in Theorem \ref{thmHoelder:PartialHoelderAt0}. As a particular case of \eqref{eqPre:Equation}, we cover linear equations, and thus, Theorem \ref{thmHoelder:PartialHoelderAt0} implies Theorem \ref{thmIntro:InformalMainResult}.
\end{remark}

Now we can move on to the proof of Theorem \ref{thmHoelder:PartialHoelderAt0}.  When one works with a family of equations that is invariant under the natural scaling, the typical proof (which is very standard by now) rescales the target function at each stage of the oscillation reduction to end up in a setting for which the version of Theorems \ref{thmHarnack:WeakHarnackDrift} or \ref{thmHarnack:WeakHarnackNoDrift} for $R=1$ is applicable.  If that were the case here, there would be no need to include the following proof.  We do include the proof as it includes all the relevant arguments that do not invoke any rescaling.  Another small deviation is that when $\rho>0$, the oscillation reduction argument contains only finitely many steps, in contrast to the case of $\rho=0$.

\begin{proof}[Proof of Theorem \ref{thmHoelder:PartialHoelderAt0}]
We will prove \eqref{eqHoelder:Result} only for case \ref{itemHoelder:PartialHoelderAt0Drift}. For part \ref{itemHoelder:PartialHoelderAt0NoDrift}, the proof only differs by neglecting the drift term and invoking part \ref{itemHarnack:CorHarnackNoDrift} of Corollary \ref{corHarnack:HarnackCorollary} instead of part \ref{itemHarnack:CorHarnackDrift}. Note that inequality \eqref{eqHoelder:Result} holds trivially if $u\equiv 0$ in $\R^d\times I_R$. Hence, we assume that $\|u\|_{L^\infty(\Rd\times I_R)}>0$. Furthermore, we claim that it suffices to prove 
\begin{align}\label{eqHoelder:WlogToProve}
R^\alpha\frac{|u(x,t)-u(0,0)|}{d((x,t),(0,0))^\alpha}\leq C
\end{align}
for all $u$ satisfying
\begin{align}\label{eqHoelder:WlogAssumption}
\|u\|_{L^\infty(\Rd\times I_R)}\leq\frac{1}{2} \quad\text{and}\quad \left(\frac{R}{2}\right)^{2s}A\leq\frac{\varepsilon}{4}
\end{align}
for some $\varepsilon>0$ that will be determined later. Indeed, by replacing $u$ with
\begin{align*}
\frac{u}{2\|u\|_{L^\infty(\R^d\times I_R)}+\frac{4}{\varepsilon}\left(\frac{R}{2}\right)^{2s}A},
\end{align*}
we achieve \eqref{eqHoelder:WlogAssumption} and once \eqref{eqHoelder:WlogToProve} is proven, \eqref{eqHoelder:Result} will follow for a possibly different constant $C$. Let $N$ be the integer satisfying
\begin{align}\label{eqHoelder:RangeN}
4^{-(N+1)}R<\rho\leq 4^{-N}R. 
\end{align}
If $\rho=0$, we set $N=\infty$. It now suffices to find a nondecreasing sequence $\{m_k\}_{k=0}^N$ and a nonincreasing sequence $\{M_k\}_{k=0}^N$ that fulfill
\begin{align}\label{eqHoelder:ToProve}
m_k\leq u\leq M_k \enspace\text{in}\enspace Q_{4^{-k}R} \quad\text{and}\quad M_k-m_k=4^{-k\alpha}
\end{align}
for any $k\in\{0,\ldots,N\}$ and some $\alpha\in(0,2s)$ that will be chosen below. Indeed, for any $(x,t)\in Q_R\setminus Q_\rho$, there exists $k\in\{0,\ldots,N\}$ such that $4^{-(k+1)}R<d((x,t),(0,0))\leq 4^{-k}R$, and then it would follow from \eqref{eqHoelder:ToProve} that
\begin{align*}
R^\alpha\frac{|u(x,t)-u(0,0)|}{d((x,t),(0,0))^\alpha}\leq R^\alpha\frac{\sup_{Q_{4^{-k}R}}u-\inf_{Q_{4^{-k}R}}u}{\left(4^{-(k+1)}R\right)^\alpha}\leq 4^{(k+1)\alpha}(M_k-m_k)=4^\alpha,
\end{align*}
which, under \eqref{eqHoelder:WlogAssumption}, is equivalent to \eqref{eqHoelder:WlogToProve} for $C=4^\alpha$. Let us construct $\{m_k\}_k$ and $\{M_k\}_k$ by finite induction if $\rho>0$ and infinite induction if $\rho=0$. First, we set $m_0=-\frac{1}{2}$ and $M_0=\frac{1}{2}$ such that \eqref{eqHoelder:ToProve} is satisfied for $k=0$ due to \eqref{eqHoelder:WlogAssumption}. Suppose now that we have defined the sequences up to $k\leq N-1$ and consider the function
\begin{align*}
v\coloneqq 2\frac{u-m_k}{M_k-m_k} \quad\text{in}\enspace\R^d\times I_R.
\end{align*}
By the induction hypothesis, we see that $0\leq v\leq 2$ in $Q_{4^{-k}R}$. At least one of the inequalities
\begin{align}\label{eqHoelder:Poss1}
\left|\{v\geq 1\}\cap\left(B_{4^{-(k+1)}R}\times(I_{4^{-k}R/2}\setminus I_{4^{-(k+1)}R})\right)\right|\geq\frac{1}{2}\left|B_{4^{-(k+1)}R}\times(I_{4^{-k}R/2}\setminus I_{4^{-(k+1)}R})\right|
\end{align}
and
\begin{align}\label{eqHoelder:Poss2}
\left|\{v\leq 1\}\cap\left(B_{4^{-(k+1)}R}\times(I_{4^{-k}R/2}\setminus I_{4^{-(k+1)}R})\right)\right|\geq\frac{1}{2}\left|B_{4^{-(k+1)}R}\times(I_{4^{-k}R/2}\setminus I_{4^{-(k+1)}R})\right|
\end{align}
has to hold true. Let us assume that \eqref{eqHoelder:Poss1} is in force. Otherwise, if \eqref{eqHoelder:Poss2} is true, then we can repeat the same argument for $2-v$ instead of $v$. Note that $v$ satisfies
\begin{align}\label{eqHoelder:EquationForV}
\partial_tv+\Lambda|\nabla v|-\M_\rho^-v\geq -\frac{2}{M_k-m_k}A \enspace\text{in}\enspace Q_R
\end{align}
in the viscosity sense. We want to use Corollary \ref{corHarnack:HarnackCorollary} for the function $w\coloneqq v_+$, which coincides with $v$ in $Q_{4^{-k}R}$. Let us claim that $w$ solves
\begin{align}\label{eqHoelder:ClaimForW}
\partial_tw+\Lambda|\nabla w|-\M_\rho^-w\geq -\left(\frac{4^{-k}R}{2}\right)^{-2s}\varepsilon\enspace\text{in}\enspace Q_{4^{-k}R/2}
\end{align}
in the viscosity sense. Observe that $\partial_tw=\partial_tv$ and $\nabla w=\nabla v$ in $Q_{4^{-k}R}$. Furthermore, we can apply \eqref{eqPre:FullyNonlinear} for $F=\M_\rho^-$, namely
\begin{align*}
\M_\rho^-w=\M_\rho^-(v-(-v_-))\leq\M_\rho^-v-\M_\rho^-(-v_-)=\M_\rho^-v+\M_\rho^+v_-.
\end{align*}
Let us notice that
\begin{align*}
\frac{2}{M_k-m_k}A=2\cdot 4^{k\alpha}A=\left(\frac{4^{-k}R}{2}\right)^{-2s}\cdot 4^{k(\alpha-2s)}\cdot 2\left(\frac{R}{2}\right)^{2s}A\leq\left(\frac{4^{-k}R}{2}\right)^{-2s}\frac{\varepsilon}{2}
\end{align*}
by \eqref{eqHoelder:WlogAssumption}, since we imposed $\alpha<2s$. Therefore, \eqref{eqHoelder:EquationForV} yields that it is now sufficient to prove the tail estimate
\begin{align}\label{eqHoelder:ClaimNegativePart}
\M_\rho^+v_-\leq\left(\frac{4^{-k}R}{2}\right)^{-2s}\frac{\varepsilon}{2}\enspace\text{in}\enspace Q_{4^{-k}R/2}.
\end{align}
This can be checked pointwise, and the inequality will remain true also in the viscosity sense. For that, fix $K\in\K_\rho$ and let us claim that
\begin{align}\label{eqHoelder:TailBound}
v(y,t)\geq -2\left(4^{(k+1)\alpha}\left(\frac{|y|}{R}\right)^\alpha-1\right)
\end{align}
for any $y\in\R^d\setminus B_{4^{-k}R}$ and $t\in I_R$. Indeed, for $j\in\{0,\ldots,k-1\}$ and $y\in B_{4^{-j}R}\setminus B_{4^{-(j+1)}R}$, we have
\begin{align*}
v(y,t)=2\frac{u(y,t)-M_k+M_k-m_k}{M_k-m_k}&\geq 2\left(\frac{m_j-M_j}{M_k-m_k}+1\right) \\
&=-2(4^{(k-j)\alpha}-1)\geq -2\left(4^{(k+1)\alpha}\left(\frac{|y|}{R}\right)^\alpha-1\right)
\end{align*}
by induction hypothesis, whereas for $y\in\R^d\setminus B_R$, we see that
\begin{align*}
v(y,t)=2\frac{u(y,t)-M_k+M_k-m_k}{M_k-m_k}&\geq 2\left(\frac{-1/2-1/2}{4^{-k\alpha}}+1\right) \\
&=-2(4^{k\alpha}-1)\geq -2\left(4^{(k+1)\alpha}\left(\frac{|y|}{R}\right)^\alpha-1\right)
\end{align*}
by \eqref{eqHoelder:WlogAssumption}. Since $v\geq 0$ in $Q_{4^{-k}R}$, we find that, for fixed $(x,t)\in Q_{4^{-k}R/2}$,
\begin{align}\label{eqHoelder:TailEstimate}
L_Kv_-(x,t)=\intd v_-(x+y,t)K(y)\d y\leq 2\int_{\Rd\setminus B_{4^{-k}R}}\left(4^{(k+1)\alpha}\left(\frac{|y|}{R}\right)^\alpha-1\right)K(y-x)\d y
\end{align}
by \eqref{eqHoelder:TailBound}. Let us remark here that $v_-=0$ in a neighborhood of $x$. In particular, $\nabla v_-(x)$ vanishes. To this end, notice that, for $y\in\Rd\setminus B_{4^{-k}R}$, $|y-x|\geq |y|-|x|\geq 4^{-k}R/2$ and therefore,
\begin{align}\label{eqHoelder:DistanceEstimate}
\frac{|y|}{|y-x|}\leq\frac{|x|+|y-x|}{|y-x|}=\frac{|x|}{|y-x|}+1\leq\frac{4^{-k}R/2}{4^{-k}R/2}+1=2.
\end{align}
It now follows from \eqref{eqHoelder:TailEstimate} and \eqref{eqHoelder:DistanceEstimate} that
\begin{align*}
L_Kv_-(x,t)&\leq 2\int_{\Rd\setminus B_{4^{-k}R}}\left(4^{(k+1)\alpha}\left(\frac{2|y-x|}{R}\right)^\alpha-1\right)K(y-x)\d y \\
&\leq 2\int_{\Rd\setminus B_{4^{-k}R/2}}\left(4^{(k+1)\alpha}\left(\frac{2|y|}{R}\right)^\alpha-1\right)K(y)\d y,
\end{align*}
where we also used the fact that $B_{4^{-k}R}(x)\supseteq B_{4^{-k}R/2}$ in the last step. Let us split the integration domain into appropriate annuli and use \eqref{eqPre:UpperEllipticityAssumption}. We arrive at
\begin{align*}
L_Kv_-(x,t)&\leq 2\sum_{i=0}^\infty\int_{B_{2^{i-2k}R}\setminus B_{2^{i-2k-1}R}}\left(4^{(k+1)\alpha}\left(\frac{2|y|}{R}\right)^\alpha-1\right)K(y)\d y \\
&\leq 2\sum_{i=0}^\infty\left(4^{(k+1)\alpha}\left(\frac{2\cdot 2^{i-2k}R}{R}\right)^\alpha-1\right)\int_{B_{2^{i-2k}R}\setminus B_{2^{i-2k-1}R}}K(y)\d y \\
&\leq 2\sum_{i=0}^\infty\left(4^{(k+1)\alpha}\cdot 2^{(i-2k+1)\alpha}-1\right)\Lambda (2^{i-2k-1}R)^{-2s} \\
&=2\Lambda\sum_{i=0}^\infty\left(2^{(i+3)\alpha}-1\right)2^{-2si}\left(2^{-2k-1}R\right)^{-2s} \\
&=2\Lambda\left(8^\alpha\sum_{i=0}^\infty 2^{-i(2s-\alpha)}-\sum_{i=0}^\infty 2^{-2si}\right)\left(4^{-k}\frac{R}{2}\right)^{-2s} \\
&=2\Lambda\left(\frac{8^\alpha}{1-2^{-(2s-\alpha)}}-\frac{1}{1-2^{-2s}}\right)\left(4^{-k}\frac{R}{2}\right)^{-2s}
\end{align*}
for any $\alpha\in(0,2s)$. We are left to choose $\alpha=\alpha(s,\Lambda,\varepsilon)\in(0,2s)$ small enough so that
\begin{align*}
2\Lambda\left(\frac{8^\alpha}{1-2^{-(2s-\alpha)}}-\frac{1}{1-2^{-2s}}\right)\leq\frac{\varepsilon}{2},
\end{align*}
whence \eqref{eqHoelder:ClaimNegativePart} will follow by taking supremum with respect to $K\in\K_\rho$. Recall that we have shown \eqref{eqHoelder:ClaimForW} in order to use Corollary \ref{corHarnack:HarnackCorollary}, which gives
\begin{align*}
\inf_{Q_{4^{-(k+1)}R}}w\geq c\fint_{I_{4^{-k}R/2}\setminus I_{4^{-(k+1)}R}}\fint_{B_{4^{-k}R/2}}w(x,t)\d x\d t-\varepsilon
\end{align*}
for some $c=c(d,s,\lambda,\Lambda,R_0)>0$. Note that Corollary \ref{corHarnack:HarnackCorollary} was indeed applicable in $Q_{4^{-k}R/2}$ even if $\rho>0$ because $4^{-k}R/2\geq 4^{-(N-1)}R/2\geq 4^{-N}R\geq\rho$ due to \eqref{eqHoelder:RangeN}. Since $v$ and $w$ coincide on the integration domain, we see from \eqref{eqHoelder:Poss1} that
\begin{align*}
\inf_{Q_{4^{-(k+1)}R}}v\geq\frac{c}{2}-\varepsilon\geq\varepsilon,
\end{align*}
provided we have chosen $\varepsilon=\min\{c/4,1\}=\varepsilon(d,s,\lambda,\Lambda,R_0)\in(0,1]$. In other words, we found that
\begin{align*}
u\geq m_k+4^{-k\alpha}\frac{\varepsilon}{2}\enspace\text{in}\enspace Q_{4^{-(k+1)}R}
\end{align*}
by the very definition of $v$. Let us finally define $M_{k+1}\coloneqq M_k$ and $m_{k+1}\coloneqq m_k+4^{-k\alpha}(1-4^{-\alpha})$ such that $M_{k+1}-m_{k+1}=4^{-(k+1)\alpha}$ and $u\leq M_{k+1}$ in $Q_{4^{-(k+1)}R}$. If we take
\begin{align*}
\alpha\leq -\frac{\log\left(1-\frac{\varepsilon}{2}\right)}{\log 4},
\end{align*}
then we indeed get
\begin{align*}
u\geq m_k+4^{-k\alpha}\frac{\varepsilon}{2}=m_{k+1}-4^{-k\alpha}\left(1-4^{-\alpha}\right)+4^{-k\alpha}\frac{\varepsilon}{2}=m_{k+1}+4^{-k\alpha}\left(4^{-\alpha}-\left(1-\frac{\varepsilon}{2}\right)\right)\geq m_{k+1}
\end{align*}
in $Q_{4^{-(k+1)}R}$, which needs to be proven. Note that $\alpha$ now depends only on $d,s,\lambda, \Lambda$ and $R_0$ as desired.
\end{proof}

While the above theorem provides partial Hölder regularity at the origin, the following corollary looks like a usual Hölder regularity result except for the restriction $d((x,t),(y,\tau))>\rho$.

\begin{corollary}\label{corHoelder:CorollaryHoelder}
Let the assumptions of Theorem \ref{thmHoelder:PartialHoelderAt0} be satisfied for $R>2\rho$. Then there exist $\alpha\in(0,2s)$ and $C>0$ depending only on $d,s,\lambda,\Lambda$ and $R_0$ such that
\begin{align*}
\sup_{\substack{(x,t),(y,\tau)\in Q_{R/4}\\d((x,t),(y,\tau))>\rho}}\frac{|u(x,t)-u(y,\tau)|}{d((x,t),(y,\tau))^\alpha}\leq\frac{C}{R^\alpha}\left(\|u\|_{L^\infty(\Rd\times I_R)}+R^{2s}A\right).
\end{align*}
In the case \ref{itemHoelder:PartialHoelderAt0NoDrift} of Theorem \ref{thmHoelder:PartialHoelderAt0}, $\alpha$ and $C$ do not depend on $R_0$.
\end{corollary}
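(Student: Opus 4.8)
The plan is the standard translation-and-dichotomy reduction to Theorem \ref{thmHoelder:PartialHoelderAt0}. Fix $(x_0,t_0),(x_1,t_1)\in Q_{R/4}$ with $d\coloneqq d((x_0,t_0),(x_1,t_1))>\rho$; since the quotient and $d$ are symmetric in the two points, we may assume $t_1\le t_0$. Let $\kappa\coloneqq(1-4^{-2s})^{1/(2s)}\in(0,1)$, which depends only on $s$, set $\delta\coloneqq\kappa/4$, and distinguish the cases $d>\delta R$ and $d\le\delta R$. In the first case there is nothing to prove beyond the crude bound $|u(x_1,t_1)-u(x_0,t_0)|\le 2\|u\|_{L^\infty(\Rd\times I_R)}$ (valid since $Q_{R/4}\subseteq\Rd\times I_R$), which after dividing by $d^\alpha$ contributes a term of the form $CR^{-\alpha}\|u\|_{L^\infty(\Rd\times I_R)}$ with $C=2\delta^{-\alpha}$, for any exponent $\alpha$.

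In the case $d\le\delta R$ I would apply Theorem \ref{thmHoelder:PartialHoelderAt0} to the translated function $\tilde u(x,t)\coloneqq u(x+x_0,t+t_0)$ at the scale $R'\coloneqq\kappa R/2$. Because $\partial_t$, $\nabla$ and $\M_\rho^\pm$ commute with translations in space and time and the viscosity notion is translation invariant, $\tilde u$ solves the same extremal system \eqref{eqHoelder:EquationDrift} (resp.\ \eqref{eqHoelder:EquationNoDrift}) in $Q_{R'}$, as soon as $(x_0,t_0)+Q_{R'}\subseteq Q_R$. The spatial inclusion follows from $|x_0|+R'<R/4+R/2<R$; the one-sided, \emph{backward} time inclusion follows from $(R')^{2s}<(1-4^{-2s})R^{2s}\le R^{2s}+t_0$, where the last step uses $t_0>-(R/4)^{2s}=-4^{-2s}R^{2s}$. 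Moreover $\rho<d\le\delta R<R'<R\le R_0$, so $R'\in(\rho,R_0]$ as Theorem \ref{thmHoelder:PartialHoelderAt0} requires, and the exponent $\alpha$ it furnishes (depending only on $d,s,\lambda,\Lambda$, and on $R_0$ in case \ref{itemHoelder:PartialHoelderAt0Drift}) is independent of the scale at which it is invoked. Next, $(x_1-x_0,t_1-t_0)\in Q_{R'}\setminus Q_\rho$: the inclusion in $Q_{R'}$ is immediate from $|x_1-x_0|\le d<R'$ and $0\ge t_1-t_0\ge-d^{2s}>-(R')^{2s}$, while $(x_1-x_0,t_1-t_0)\notin Q_\rho$ because $d>\rho$ forces $|x_1-x_0|>\rho$ or $t_0-t_1>\rho^{2s}$. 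Inserting this point into the estimate of Theorem \ref{thmHoelder:PartialHoelderAt0} applied to $\tilde u$, using $d((x_1-x_0,t_1-t_0),(0,0))=d$, $\|\tilde u\|_{L^\infty(\Rd\times I_{R'})}\le\|u\|_{L^\infty(\Rd\times I_R)}$ and $(R')^{2s}\le R^{2s}$, and dividing through by $(R')^\alpha=(\kappa R/2)^\alpha$, gives
\begin{align*}
\frac{|u(x_1,t_1)-u(x_0,t_0)|}{d^\alpha}\le\frac{C\,(2/\kappa)^\alpha}{R^\alpha}\Big(\|u\|_{L^\infty(\Rd\times I_R)}+R^{2s}A\Big).
\end{align*}
Combining the two cases, taking the larger constant, and passing to the supremum over admissible pairs yields the claim. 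Since $\kappa$ depends only on $s$, the resulting $\alpha$ and $C$ inherit exactly the dependence of those in Theorem \ref{thmHoelder:PartialHoelderAt0}, so in case \ref{itemHoelder:PartialHoelderAt0NoDrift} they do not depend on $R_0$.

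The only genuinely delicate point is the choice of $R'$ in the second case: it must be at once a fixed fraction of $R$ (to retain the factor $R^{-\alpha}$), larger than $\rho$ (so that the estimate of Theorem \ref{thmHoelder:PartialHoelderAt0} is applicable and non-vacuous), large enough to reach $(x_1,t_1)$ from $(x_0,t_0)$, and small enough that the \emph{backward} parabolic cylinder $(x_0,t_0)+Q_{R'}$ still fits inside $Q_R$. This last constraint is what pins $R'$ down to roughly $(1-4^{-2s})^{1/(2s)}R$ and is responsible for the degeneration of the constant as $s\to0^+$. Restricting to $Q_{R/4}$ and to $d\le\delta R$ in the second case reconciles all four requirements, and the standing hypothesis $d>\rho$ in the statement is precisely what supplies $\rho<R'$.
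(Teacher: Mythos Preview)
Your proof is correct and follows the same translation strategy as the paper: shift so that one of the two points becomes the origin and apply Theorem~\ref{thmHoelder:PartialHoelderAt0} to the shifted function. The paper dispenses with your near/far dichotomy and simply works at scale $R/2$: for $(y,\tau)\in Q_{R/4}$ it asserts $(y,\tau)+Q_{R/2}\subseteq Q_R$, and any second point $(x,t)\in Q_{R/4}$ with $t\le\tau$ automatically satisfies $(x-y,t-\tau)\in Q_{R/2}\setminus Q_\rho$, so a single application of Theorem~\ref{thmHoelder:PartialHoelderAt0} suffices. That is shorter, but the time inclusion it uses amounts to $4^{-2s}+2^{-2s}\le1$, which holds in the case $s\ge\tfrac12$ the paper writes out explicitly but fails for small $s$; your tuned scale $R'=\kappa R/2$ with $\kappa=(1-4^{-2s})^{1/(2s)}$ and the accompanying dichotomy are precisely what make the backward-in-time inclusion work uniformly for every $s\in(0,1)$, at the price of a constant that degenerates as $s\to0^+$.
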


We note the basic observation that $R>2\rho$ is intentional, as Theorem \ref{thmHoelder:PartialHoelderAt0} is eventually invoked in $Q_{R/2}$, whereby we will have an estimate for points in $Q_{R/2}\setminus Q_\rho$.  For completeness, we include the straightforward proof in Appendix \ref{secApp:Proofs}.

As a byproduct, we obtain Liouville's theorem.

\begin{proposition}[Liouville's theorem]
Let $u\in L^\infty(\Rd\times(-\infty,0])$ satisfy
\begin{align*}
\partial_tu-\M_\rho^+u\leq 0 \enspace\text{and}\enspace \partial_tu-\M_\rho^-u\geq 0 \enspace\text{in}\enspace\Rd\times(-\infty,0]
\end{align*}
in the viscosity sense.
Then $u$ is constant in $\Rd\times(-\infty,0]$.
\end{proposition}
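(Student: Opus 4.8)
The plan is to deduce the statement directly from the partial Hölder estimate of Corollary~\ref{corHoelder:CorollaryHoelder}, applied on cylinders whose radius tends to infinity. Since $u$ satisfies the no-drift extremal inequalities \eqref{eqHoelder:EquationNoDrift} with $A=0$, we are in case~\ref{itemHoelder:PartialHoelderAt0NoDrift}, so the exponent $\alpha\in(0,2s)$ and the constant $C$ supplied by that corollary depend only on $d,s,\lambda,\Lambda$; crucially they are \emph{independent of the scale}, which is exactly the feature that makes the limiting argument work.

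First I would fix two points $P=(x_1,t_1),Q=(x_2,t_2)\in\Rd\times(-\infty,0]$ and reduce, by a space-time translation, to the situation where $P$ and $Q$ lie in $Q_{R/4}$ for every large $R$. Assuming without loss of generality $t_1\geq t_2$ and setting $\tilde u(z,\sigma)\coloneqq u(z+x_1,\sigma+t_1)$, the function $\tilde u$ is bounded on $\Rd\times(-\infty,0]$ with $\|\tilde u\|_{L^\infty}=\|u\|_{L^\infty}$ and still solves \eqref{eqHoelder:EquationNoDrift} on all of $\Rd\times(-\infty,0]$, because $\K_\rho$ (hence $\M_\rho^\pm$) is translation invariant in the spatial variable and the time interval is unbounded below. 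The translated points are $\tilde P=(0,0)$ and $\tilde Q=(x_2-x_1,t_2-t_1)$ with $t_2-t_1\leq0$, and $d(\tilde P,\tilde Q)=d(P,Q)$; for $R$ large both lie in $Q_{R/4}$.

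Next, suppose in addition that $d(P,Q)>\rho$. Applying Corollary~\ref{corHoelder:CorollaryHoelder} to $\tilde u$ on $Q_R$ (legitimate once $R>2\rho$, taking $R_0\coloneqq R$, which does not enter the constants in case~\ref{itemHoelder:PartialHoelderAt0NoDrift}) yields
\begin{align*}
|u(P)-u(Q)|=|\tilde u(\tilde P)-\tilde u(\tilde Q)|\leq\frac{C}{R^\alpha}\,d(P,Q)^\alpha\,\|u\|_{L^\infty(\Rd\times(-\infty,0])}.
\end{align*}
Since $\alpha>0$, the right-hand side tends to $0$ as $R\to\infty$, forcing $u(P)=u(Q)$. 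Hence $u$ assigns the same value to any two points at parabolic distance strictly larger than $\rho$.

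Finally, to remove the restriction $d(P,Q)>\rho$ — which is only an issue when $\rho>0$ — given arbitrary $P,Q$ I would pick an auxiliary point $S=(x_S,0)$ with $|x_S|>|x_1|+|x_2|+\rho$, so that $d(P,S)>\rho$ and $d(Q,S)>\rho$, and conclude $u(P)=u(S)=u(Q)$ from the previous step. Therefore $u$ is constant on $\Rd\times(-\infty,0]$. The only step requiring genuine care is the reduction to arbitrarily large scales: one must make sure Corollary~\ref{corHoelder:CorollaryHoelder} is available at every sufficiently large radius with constants independent of that radius — this is precisely the robustness built into case~\ref{itemHoelder:PartialHoelderAt0NoDrift}, where no $R_0$-dependence appears — so in the end there is no real obstacle.
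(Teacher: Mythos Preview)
Your proof is correct and follows essentially the same route as the paper: apply Corollary~\ref{corHoelder:CorollaryHoelder} in the no-drift case with $A=0$ on larger and larger cylinders, use that $\alpha$ and $C$ are independent of $R$, and send $R\to\infty$. The paper does this directly without the preliminary translation (any two points of $\Rd\times(-\infty,0]$ eventually lie in $Q_{R/4}$), and it simply states that the vanishing of the seminorm over pairs with $d>\rho$ already proves constancy, whereas you make the auxiliary-point step explicit; both are fine.
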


\begin{proof}
Since $u$ satisfies the above inequalities on any cylinder $Q_R$ with $R>0$, we can apply Corollary \ref{corHoelder:CorollaryHoelder} for any $R>2\rho$ and $A=0$. Hence, there exists $\alpha\in(0,2s)$ and $C>0$ that depend only on $d,s,\lambda$ and $\Lambda$ such that
\begin{align*}
\sup_{\substack{(x,t),(y,\tau)\in Q_{R/4}\\d((x,t),(y,\tau))>\rho}}\frac{|u(x,t)-u(y,\tau)|}{d((x,t),(y,\tau))^\alpha}\leq\frac{C}{R^\alpha}\|u\|_{L^\infty(\Rd\times I_R)}\leq\frac{C}{R^\alpha}\|u\|_{L^\infty(\Rd\times(-\infty,0])}.
\end{align*}
Note that, in particular, they do not depend on an upper bound for $R$, since the drift term is not present. Letting $R\to\infty$, we see that
\begin{align*}
\sup_{\substack{(x,t),(y,\tau)\in\Rd\times(-\infty,0]\\d((x,t),(y,\tau))>\rho}}\frac{|u(x,t)-u(y,\tau)|}{d((x,t),(y,\tau))^\alpha}=0,
\end{align*}
which proves the assertion.
\end{proof}

\begin{remark}
Note that $\rho$ can be arbitrarily large. This means that Liouville's theorem is not a consequence of the integro-differential structure of the operator, but of its difference structure.
\end{remark}


\section{Related results: approximation via integrable kernels and elliptic problems }

Although the primary goal of our work is to demonstrate how solutions of integro-differential equations with integrable interaction kernels inherit regularity up to the scale where the kernel is large enough, there is a closely related question of approximating equations whose interaction kernels have a singularity at the origin by those which are integrable. Our work implies new results for such approximations.  Furthermore, for elliptic problems with integrable kernels, there is no requirement for any regularity past boundedness of $u$ for the relevant operators to be defined, and we explain how our result applies in this situation.  Both issues of approximation and classical pointwise solutions of elliptic problems have been addressed earlier, in \cite{FelmerTopp-2013ConvergenceZeroOrderToNonlinearFracHeat-IsraelJournMath}, \cite{FelmerTopp-2015UnifEquicontinuityZeroOrderEqApproxFracLaplace-CPDE} and \cite{FelmerDosPrazeresTopp-2018InteriorRegZeroOrderOpsFracLaplace-IsrealJournMath}.
One could view the results of this section as a generalization of those combined between \cite{FelmerTopp-2013ConvergenceZeroOrderToNonlinearFracHeat-IsraelJournMath} and \cite{FelmerDosPrazeresTopp-2018InteriorRegZeroOrderOpsFracLaplace-IsrealJournMath}.  In the former of those works, it was proved that one can approximate solutions of an equation with kernels with singularity by those with integrable kernels, and in the latter of those works, it was shown that for the specific case of a linear Poisson equation for the fractional Laplacian, one can obtain partial regularity results exactly as in a elliptic case of Theorem \ref{thmIntro:InformalMainResult} that carry over to the regularity results expected of the original equation.


\subsection{Approximation of solutions via truncated kernels}\label{subsec:Approximation}

Consider a Cauchy (terminal value) problem given by
\begin{align}\label{eqApproximation:Cauchy}
\begin{cases}
	\partial_tu-F(u)=f&\, \text{in}\ \Rd\times(-T,0], \\
	u(\cdot,0)=u_0&\, \text{in}\ \Rd,
\end{cases}
\end{align}
where $T>0$, $f\colon\Rd\times(-T,0]\to\R$ and $u_0\colon\Rd\to\R$ are bounded and uniformly continuous and $F$ is a fully nonlinear operator \eqref{eqPre:Isaac} with respect to kernels $K^{\alpha,\beta}\in\K_0$ (i.e.\ non-integrable kernels for $\rho=0$) and simply without any drifts, that is, $b^{\alpha,\beta}\equiv 0$. For the sake of brevity, we simply \emph{assume} that $F$ is such that \eqref{eqApproximation:Cauchy} admits a unique viscosity solution. We note that if $F$ is translation invariant, then the uniqueness is immediate, but if $F$ is not translation invariant, this is a non-trivial assumption, which is connected to fundamental open questions for existence and uniqueness for solutions of such equations.

We want to approximate solutions of \eqref{eqApproximation:Cauchy} in the following way. Choose a sequence $\rho_n\to 0^+$ and consider problems
\begin{align}\label{eqApproximation:CauchyTruncated}
\begin{cases}
	\partial_tu-F_n(u)=f&\, \text{in}\ \Rd\times(-T,0], \\
	u(\cdot,0)=u_0&\, \text{in}\ \Rd,
\end{cases}
\end{align}
where $F_n$ is constructed as follows. For any $\alpha,\beta$, define
\begin{align*}
K_n^{\alpha,\beta}(y)\coloneqq\min\left\{K^{\alpha,\beta}(y),\frac{\lambda}{\rho_n^{d+2s}}\right\},\quad y\in\Rd.
\end{align*}
Then $K_n^{\alpha,\beta}\in\K_{\rho_n}$ such that $F_n$, defined as in \eqref{eqPre:Isaac} with respect to the family $\{K_n^{\alpha,\beta}\}_{\alpha,\beta}$, is a fully nonlinear operator with respect to $\K_{\rho_n}$.

Although we do not provide proofs here, if the operators, $F$ and $F_n$, are translation invariant and obey their respective ellipticity classes, one can show that there exist unique viscosity solutions for these equations that obey a comparison principle and stability.  Under slightly different assumptions on $\K_\rho$, these properties and other results were studied in detail in \cite{FelmerTopp-2013ConvergenceZeroOrderToNonlinearFracHeat-IsraelJournMath}.  They show that this approximation scheme produces uniformly continuous solutions, $u_n$, of \eqref{eqApproximation:CauchyTruncated} that converge to the solution, $u$, of the original equation \eqref{eqApproximation:Cauchy}. In the framework of \cite{FelmerTopp-2013ConvergenceZeroOrderToNonlinearFracHeat-IsraelJournMath}, the uniform continuity originates from the modulus of $u_0$ and $f$.

In the context of Corollary \ref{corHoelder:CorollaryHoelder}, one can say more about the regularity of $u_n$. Indeed, $u_n$ enjoys the Hölder estimate
\begin{align*}
\frac{|u_n(x,t)-u_n(y,\tau)|}{d((x,t),(y,\tau))^\alpha}\leq C\left(\|u_n\|_{L^\infty(\Rd\times(-T,0])}+R^{2s}\|f\|_{L^\infty(\Rd\times(-T,0])}\right)
\end{align*}
for any $(x,t)\neq (y,\tau)\in Q_R$ for any $R>0$ such that $I_{4R}\subseteq(-T,0]$. Here, $C$ and $\alpha$ depend only on $d,s,\lambda$ and $\Lambda$.

Thus, by uniform convergence and stability, the partial Hölder regularity that $u_n$ inherits from $\K_{\rho_n}$ can be propagated back to $u$. In this setting, the regularity for $u$ is not in question, as it was already contained in \cite{Sil14}, but rather the interesting part is that the approximating $u_n$ have uniform regularity.

Finally, it is also worth noting a result from \cite{Cha07} that shows the following linear mixed problems not only have continuous solutions, but they have nice behavior in time.  To this end, consider
\begin{align*}
\begin{cases}
	\partial_tu=L_Ku&\,\text{in}\ \Omega\times(-\infty,0], \\
	u=g&\,\text{in}\ ((\Rd\setminus\Omega)\times(-\infty,0])\cup(\Omega\times\{0\}),
\end{cases}
\end{align*}
with some bounded domain $\Omega\subseteq\Rd$, $K\in\K_\rho\cap L^1(\Rd)$, $\rho>0$ and some function $g\in C_b$.  These equations admit unique distributional solutions $u\in C(\overline{\Omega}\times(-\infty,0])$ according to \cite[Theorem 3.1]{Cha07}. The distributional sense means that, for any $\varphi\in C_c^\infty((-\infty,0])$,
\begin{align*}
\int_{-\infty}^0 u(x,t)\partial_t\varphi(t)\d t=\int_{-\infty}^0L_Ku(x,t)\varphi(t)\d t.
\end{align*}
Since $L_Ku$ is continuous due to dominated convergence theorem, we find that
\begin{align*}
u(x,t)=u(x,0)-\int_t^0 L_Ku(x,\tau)\d\tau
\end{align*}
such that $u(x,\cdot)\in C^1((-\infty,0])$ for any $x\in\Rd$ and the equation holds classically. However, the latter result is not true in general for $K\notin L^1(\Rd)$ as solutions converge locally uniformly, but not in $C^1$ in time.


\subsection{The elliptic case}\label{subsec:Elliptic}

In the special case of elliptic problems, there is some interesting behavior that is a consequence of the fact that, for integrable $K$, the equation can be evaluated pointwise for $L^\infty$-functions.  To this end, we investigate viscosity solutions, $u\colon\Rd\to\R$, of
\begin{align*}
F(u)=f \enspace\text{in}\enspace B_R
\end{align*}
for some given function $f\colon B_R\to\R$. As before, $F$ is a fully nonlinear operator with respect to $\K_\rho$. The notion of viscosity solutions is defined in the same manner as in Definition \ref{defPre:Viscosity}. For any solution $u$, we can consider it as a function $u\colon\Rd\times I_R\to\R$, which is constant in time.  It turns out that partial regularity for integrable kernels was already demonstrated for a special case of a linear equation in \cite{FelmerDosPrazeresTopp-2018InteriorRegZeroOrderOpsFracLaplace-IsrealJournMath}. In this way, we can apply our parabolic result to obtain the following analogous result for the elliptic case, which generalizes \cite[Theorem 1.1]{FelmerDosPrazeresTopp-2018InteriorRegZeroOrderOpsFracLaplace-IsrealJournMath}, and we omit the proof.

\begin{theorem}\label{thmElliptic:Application}
Let $A\geq 0$, $R_0>0$, $R\in(2\rho,R_0]$ and $u\in L^\infty(\Rd)$. Suppose that either
\begin{enumerate}[label=(\alph*)]
\item $s\geq 1/2$ and $u$ satisfies
\begin{align*}
\Lambda|\nabla u|+\M_\rho^+u\geq -A \enspace\text{and}\enspace \Lambda|\nabla u|-\M_\rho^-u\geq -A \enspace\text{in}\enspace B_R
\end{align*}
in the viscosity sense or\label{itemElliptic:Drift}
\item $u$ satisfies
\begin{align*}
\M_\rho^+u\geq -A \enspace\text{and}\enspace \M_\rho^-u\leq A \enspace\text{in}\enspace B_R
\end{align*}
in the viscosity sense.
\end{enumerate}
Then there exist $\alpha\in(0,2s)$ and $C>0$ depending only on $d,s,\lambda$ and $\Lambda$ such that
\begin{align}\label{eqElliptic:HoelderResult}
\sup_{\substack{x,y\in B_{R/4}\\|x-y|\geq\rho}}\frac{|u(x)-u(y)|}{|x-y|^\alpha}\leq\frac{C}{R^\alpha}\left(\|u\|_{L^\infty(\Rd\times I_R)}+R^{2s}A\right).
\end{align}
In the case \ref{itemElliptic:Drift}, $\alpha$ and $C$ also depend on $R_0$.
\end{theorem}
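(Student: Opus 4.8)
The plan is to deduce this from the parabolic Corollary \ref{corHoelder:CorollaryHoelder} by regarding $u$ as a function on $\Rd\times I_R$ that is constant in time. I would set $\tilde u(x,t)\coloneqq u(x)$ for $(x,t)\in\Rd\times I_R$, so that $\tilde u\in L^\infty(\Rd\times I_R)$ with $\|\tilde u\|_{L^\infty(\Rd\times I_R)}=\|u\|_{L^\infty(\Rd)}$. The core step is to verify that $\tilde u$ is a viscosity sub- and supersolution, on $Q_R$, of the parabolic extremal inequalities \eqref{eqHoelder:EquationDrift} in case \ref{itemElliptic:Drift}, or \eqref{eqHoelder:EquationNoDrift} in the drift-free case (b). Granting this, since $R\in(2\rho,R_0]$, Corollary \ref{corHoelder:CorollaryHoelder} applies to $\tilde u$ and gives, for all $(x,t),(y,\tau)\in Q_{R/4}$ with $d((x,t),(y,\tau))>\rho$, the estimate with $d((x,t),(y,\tau))$ in place of $|x-y|$. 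Taking the common time $t=\tau=0$ (note $0\in I_{R/4}$), one has $d((x,0),(y,0))=|x-y|$ and $\tilde u(x,0)=u(x)$, $\tilde u(y,0)=u(y)$, so for $x,y\in B_{R/4}$ with $|x-y|>\rho$ this produces \eqref{eqElliptic:HoelderResult}; the endpoint $|x-y|=\rho$ follows since $u$, being simultaneously a viscosity sub- and supersolution, is continuous.

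For the sub/supersolution check, I would fix $(x_0,t_0)\in B_R\times I_R$ and a parabolic test function $\varphi$ on $B_\varepsilon(x_0)\times(t_0-\varepsilon,t_0]$ (with $\varepsilon$ small enough that this set lies in $B_R\times I_R$) touching $\tilde u$ from above at $(x_0,t_0)$, and let $v$ be the glued function of \eqref{eqPre:ViscosityDefinitionConstructionV}. Because $\tilde u$ is independent of time, the map $t\mapsto\varphi(x_0,t)$ attains its minimum on $(t_0-\varepsilon,t_0]$ at $t_0$, so $\partial_t\varphi(x_0,t_0)\le 0$ and hence $\partial_tv(x_0,t_0)\le 0$; moreover the spatial slice $\psi\coloneqq\varphi(\cdot,t_0)$ is a $C^{1,1}$ function touching $u$ from above at $x_0$, and $v(\cdot,t_0)$ equals the elliptic glued function $w$ built from $\psi$ and $u$, so $\nabla v(x_0,t_0)=\nabla w(x_0)$ and $\M_\rho^\pm v(x_0,t_0)=\M_\rho^\pm w(x_0)$, both defined by Lemma \ref{lemmaPre:PointwiseDefined}. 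Using the elliptic subsolution property of $u$ with the test function $\psi$, namely $\Lambda|\nabla w(x_0)|+\M_\rho^+w(x_0)\ge -A$ in case \ref{itemElliptic:Drift}, and adding $\partial_tv(x_0,t_0)\le 0$, I would obtain $\partial_tv(x_0,t_0)-\Lambda|\nabla v(x_0,t_0)|-\M_\rho^+v(x_0,t_0)\le A$, which is exactly the required inequality. The supersolution direction is symmetric: a parabolic test function touching $\tilde u$ from below at $(x_0,t_0)$ forces $\partial_t\varphi(x_0,t_0)\ge 0$, and combining with the elliptic supersolution bound $\Lambda|\nabla w(x_0)|-\M_\rho^-w(x_0)\ge -A$ gives $\partial_tv(x_0,t_0)+\Lambda|\nabla v(x_0,t_0)|-\M_\rho^-v(x_0,t_0)\ge -A$. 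Case (b) is handled identically after deleting the drift terms.

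I expect the only genuine obstacle to be this consistency statement — that an elliptic viscosity solution, extended to be constant in time, is a parabolic viscosity solution in the sense of Definition \ref{defPre:Viscosity} — and within it the two delicate points are getting the correct one-sided sign of $\partial_t\varphi(x_0,t_0)$ and matching the parabolic and elliptic glued test functions on the slice $\{t=t_0\}$ so that the nonlocal terms coincide. Everything after that is a direct application of Corollary \ref{corHoelder:CorollaryHoelder} and the observation that the parabolic distance between points at equal times reduces to the spatial distance, so no further estimates are required.
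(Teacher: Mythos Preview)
Your approach is exactly the one the paper indicates: regard $u$ as a time-independent function on $\Rd\times I_R$ and apply Corollary~\ref{corHoelder:CorollaryHoelder}. The paper actually omits the proof entirely, so your write-up supplies precisely the verification that the paper leaves to the reader, and the key observations you isolate --- the sign of $\partial_t\varphi(x_0,t_0)$ forced by time-independence and the coincidence of the parabolic and elliptic glued functions on the slice $\{t=t_0\}$ --- are correct and complete.
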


The elliptic case becomes of particular interest if the drift vanishes, and the kernels are integrable and symmetric. We define the set
\begin{align}\label{eqApproximation:SymmetricKernels}
\S_\rho\coloneqq\{K\in\K_\rho\mid K\in L^1(\real^d),\  K(y)=K(-y)\ \text{for almost all}\ y\in\Rd\}
\end{align}
of symmetric and integrable admissible kernels. 
Let us notice that when $K(x,\cdot)$ is integrable, the equation
\begin{align*}
\intd(u(x+y)+u(x-y)-2u(x))K(x,y)\d y=f(x),
\end{align*}
can be understood in a pointwise sense whenever $K(x,\cdot)\in\S_\rho$ with $\rho>0$ and $u\in L^\infty(\Rd)$. More precisely, any globally bounded function solves an equation with a bounded right hand side.  However, if a solution is merely bounded, then an issue arises in implementing our results, specifically in the proof of the weak Harnack inequality, where a supersolution must attain a minimum on a bounded set.  Thus, our result does not directly apply to pointwise solutions that are only bounded.  However, our result does apply to pointwise solutions that are continuous (or rather, the weak Harnack result applies for any bounded and lower semicontinuous supersolution).  This follows because continuous pointwise solutions are necessarily viscosity solutions.  Thus, we obtain the following result for continuous pointwise solutions. Note that we invoke $\S_\rho$ as the class of admissible kernels, defined in \eqref{eqApproximation:SymmetricKernels}, and importantly, these kernels are integrable. In this case, $L_Ku$ inherits the continuity of $u$ when $u$ is continuous and bounded, and $K$ is integrable.

\begin{corollary}\label{corApproximation:EllipticPointwise}
Let $A\geq 0$, $R>2\rho>0$ and $u\in C(B_R)\cap L^\infty(\Rd)$ satisfy
\begin{align}\label{eqApproximation:SymmetricEquationSup}
\sup_{K\in\S_\rho}\intd(u(x+y)+u(x-y)-2u(x))K(y)\d y\geq -A
\end{align}
and
\begin{align}\label{eqApproximation:SymmetricEquationInf}
\inf_{K\in\S_\rho}\intd(u(x+y)+u(x-y)-2u(x))K(y)\d y\leq A
\end{align}
pointwise for any $x\in B_R$. Then there exist $\alpha\in(0,2s)$ and $C>0$ depending only on $d,s,\lambda$ and $\Lambda$ such that \eqref{eqElliptic:HoelderResult} is fulfilled.
\end{corollary}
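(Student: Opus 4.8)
The plan is to deduce the estimate from case~(b) of Theorem~\ref{thmElliptic:Application} by showing that any $u\in C(B_R)\cap L^\infty(\Rd)$ satisfying the pointwise inequalities \eqref{eqApproximation:SymmetricEquationSup}--\eqref{eqApproximation:SymmetricEquationInf} is automatically a viscosity solution of the extremal inequalities $\M_\rho^+u\ge -A$ and $\M_\rho^- u\le A$ in $B_R$. Two elementary observations drive the argument. First, since $\S_\rho\subseteq\K_\rho$, for any function $w$ on which the relevant operators are classically defined one has $\M_\rho^- w\le\inf_{K\in\S_\rho}L_Kw$ and $\M_\rho^+w\ge\sup_{K\in\S_\rho}L_Kw$. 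Second, if $K\in\S_\rho$ and $\psi\in L^\infty(\Rd)$ is $C^{1,1}$ in a neighbourhood of a point $x$, then the gradient corrections in $\delta_y\psi(x)$ integrate to zero against $K$ by the symmetry $K(y)=K(-y)$ --- for $s>1/2$ one uses in addition that $\int_{\Rd}|y|K(y)\,\d y<\infty$, which follows from $K\in L^1(\Rd)$ for $|y|$ small and from \eqref{eqPre:UpperEllipticityAssumption} together with $s>1/2$ on the tail --- so that $L_K\psi(x)=\intd(\psi(x+y)-\psi(x))K(y)\,\d y$, and hence, symmetrizing once more,
\[
L_K\psi(x)=\tfrac12\intd\bigl(\psi(x+y)+\psi(x-y)-2\psi(x)\bigr)K(y)\,\d y .
\]

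To verify the supersolution property one fixes $x_0\in B_R$ and a test function $\varphi$ touching $u$ from below at $x_0$ in the (elliptic analogue of the) sense of Definition~\ref{defPre:Viscosity}, and lets $v$ be built as in \eqref{eqPre:ViscosityDefinitionConstructionV}, i.e.\ $v=\varphi$ on a small ball $B_\varepsilon(x_0)$ and $v=u$ elsewhere. Then $v$ is $C^{1,1}$ near $x_0$ and globally bounded, so $\M_\rho^\pm v(x_0)$ and $L_Kv(x_0)$ are classically defined by Lemma~\ref{lemmaPre:PointwiseDefined}, and $v\le u$ on $\Rd$ with equality at $x_0$. For every $K\in\S_\rho$, the displayed identity together with $v(x_0\pm y)\le u(x_0\pm y)$ and $v(x_0)=u(x_0)$ gives $L_Kv(x_0)\le\tfrac12\intd(u(x_0+y)+u(x_0-y)-2u(x_0))K(y)\,\d y$; taking the infimum over $K\in\S_\rho$, using \eqref{eqApproximation:SymmetricEquationInf} and $A\ge0$, and invoking the first observation yields $\M_\rho^- v(x_0)\le\inf_{K\in\S_\rho}L_Kv(x_0)\le A/2\le A$, which is precisely the inequality required of a viscosity supersolution. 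The subsolution property $\M_\rho^+u\ge -A$ follows by the mirror argument with $\varphi$ touching from above, $v\ge u$, equality at $x_0$, and \eqref{eqApproximation:SymmetricEquationSup}, giving $\M_\rho^+v(x_0)\ge\sup_{K\in\S_\rho}L_Kv(x_0)\ge -A/2\ge -A$. Since $u\in C(B_R)$ is in particular upper and lower semicontinuous in $B_R$, it is a genuine viscosity solution of the extremal pair, and case~(b) of Theorem~\ref{thmElliptic:Application} (equivalently, Corollary~\ref{corHoelder:CorollaryHoelder} applied to $u$ viewed as constant in time on $\Rd\times I_R$, with $A$ in place of $\|f\|_{L^\infty}$) with $R_0:=R$ then gives exactly \eqref{eqElliptic:HoelderResult}, the constants $\alpha$ and $C$ depending only on $d,s,\lambda,\Lambda$ since no drift term is present.

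The only genuine content beyond bookkeeping is the second observation --- that a symmetric integrable kernel in $\S_\rho$ acts on a test function only through its symmetric second difference --- together with the point that one must \emph{not} attempt to absorb the factor $\tfrac12$ by rescaling the kernel: neither $2K$ nor $\tfrac12K$ lies in $\S_\rho$, since they violate \eqref{eqPre:LowerEllipticityAssumption} or \eqref{eqPre:UpperEllipticityAssumption}. Instead one simply notes that halving the right-hand side only weakens the inequality, which is harmless because $A\ge0$. The remaining care is routine: verifying via Lemma~\ref{lemmaPre:PointwiseDefined} that $\M_\rho^\pm v(x_0)$ is \emph{classically} defined at the contact point, so that the computation above takes place pointwise rather than merely in the viscosity sense, and checking the tail integrability $\int_{\Rd}|y|K(y)\,\d y<\infty$ for $K\in\S_\rho$ when $s>1/2$.
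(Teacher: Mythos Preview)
Your proof is correct and follows the same overall route as the paper: show that a continuous pointwise solution of \eqref{eqApproximation:SymmetricEquationSup}--\eqref{eqApproximation:SymmetricEquationInf} is a viscosity solution of the extremal inequalities $\M_\rho^+u\ge -A$, $\M_\rho^-u\le A$, and then invoke Theorem~\ref{thmElliptic:Application}(b).

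The difference is in bookkeeping. The paper asserts at the outset that the symmetric second-difference extremal operators over $\S_\rho$ ``coincide'' with $\M_\rho^\pm$ on smooth functions, and then works directly with $\inf_{K\in\S_\rho}\int(v(x_0+y)+v(x_0-y)-2v(x_0))K(y)\,\d y$, splitting the integral over $B_\varepsilon$ and its complement and using the algebraic identity for the second difference of $\varphi$ versus $u$. You instead keep the factor $\tfrac12$ explicit, use only the one-sided inclusion $\S_\rho\subseteq\K_\rho$ (so $\M_\rho^- v\le\inf_{K\in\S_\rho}L_Kv$), and exploit the global ordering $v\le u$ with equality at $x_0$ directly. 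Your treatment is arguably more transparent on two points the paper glosses over: the relationship between $\S_\rho$ and $\K_\rho$ (only an inclusion is needed, not equality of extremal operators), and the innocuous role of the factor $\tfrac12$, which cannot be absorbed into the kernel but simply sharpens the right-hand side. Both arguments are sound; yours avoids the unproven coincidence claim.
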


\begin{remark}
	As one will see, the continuity assumption does not appear directly in the proof below, but rather it is used implicitly with the requirement that viscosity subsolutions must be upper semicontinuous and viscosity supersolutions must be lower semicontinuous.
\end{remark}

\begin{proof}[Proof of Corollary \ref{corApproximation:EllipticPointwise}]
Note that under the additional symmetry assumption, the definitions of the extremal operators coincide with \eqref{eqPre:DefinitionPucci} on smooth functions, that is, all viscosity solutions coincide. In order to apply Theorem \ref{thmElliptic:Application}, it suffices to show that $u$ solves \eqref{eqApproximation:SymmetricEquationSup} and \eqref{eqApproximation:SymmetricEquationInf} also in the viscosity sense. Let us deduce that $u$ is a viscosity supersolution, and the subsolution property will follow similarly.  We first note that $u$, by assumption of continuity, is also lower semicontinuous in $B_R$.  Let $x_0\in B_R$ and suppose, there exists a test function $\varphi\colon B_\varepsilon(x_0)\to\R$ for some $\varepsilon>0$ such that $u(x_0)=\varphi(x_0)$ and $u\geq\varphi$ in $B_\varepsilon(x_0)$. Define $v$ as in \eqref{eqPre:ViscosityDefinitionConstructionV}. Fix $y\in B_\varepsilon$ and $K\in\S_\rho$. Then
\begin{align*}
\varphi(x_0+y)+\varphi(x_0-y)-2\varphi(x_0)&=(\varphi(x_0+y)-u(x_0))+(\varphi(x_0-y)-u(x_0)) \\
&=(\varphi(x_0+y)-u(x_0+y))+(\varphi(x_0-y)-u(x_0-y)) \\
&\quad +(u(x_0+y)+u(x_0-y)-2u(x)).
\end{align*}
Hence, using symmetry, we find that
\begin{align*}
&\inf_{K\in\S_\rho}\intd(v(x_0+y)+v(x_0-y)-2v(x_0))K(y)\d y \\
&\quad =\inf_{K\in\S_\rho}\left(\int_{B_\varepsilon}(\varphi(x_0+y)+\varphi(x_0-y)-2\varphi(x_0))K(y)\d y\right. \\
&\quad\quad\quad\quad\quad\quad\left.+\int_{\Rd\setminus B_\varepsilon}(u(x_0+y)+u(x_0-y)-2u(x_0))K(y)\d y\right) \\
&\quad =\inf_{K\in\S_\rho}\left(2\int_{B_\varepsilon}(\varphi(x_0+y)-u(x_0+y))K(y)\d y+\intd(u(x_0+y)+u(x_0-y)-2u(x_0))K(y)\d y\right) \\
&\quad\leq\inf_{K\in\S_\rho}\intd(u(x_0+y)+u(x_0-y)-2u(x_0))K(y)\d y\leq A
\end{align*}
and any of the above integrals exist due to integrability of $K\in\S_\rho$.
\end{proof}

\begin{remark}
As part of establishing the previous corollary, we proved that continuous pointwise solutions are viscosity solutions. In fact, the converse is also true, that any continuous viscosity solution will be a pointwise solution.  This direction relies on a slightly deeper observation that the set of points where a continuous function can have contact from above with a test function is dense (see, e.g.\ the proof of \cite[Lemma A.3]{GS16}).
\end{remark}

More results on convergence of solutions with respect to truncated kernels in the elliptic case can be found in \cite{FelmerTopp-2015UnifEquicontinuityZeroOrderEqApproxFracLaplace-CPDE}.


\appendix

\section{Additional Proofs}\label{secApp:Proofs}

Here, we collect proofs of various results and steps from above, which we felt were mostly straightforward and could be reasonably collected here.  For some of them, even though we are working in the case $\rho\geq 0$, they are identical to the corresponding proofs for $\rho=0$.

It will be convenient to state some properties of admissible kernels that follow from the taken assumptions \eqref{eqPre:LowerEllipticityAssumption}, \eqref{eqPre:UpperEllipticityAssumption} and \eqref{eqPre:SymmetryAssumption}, and we will only use those in the following proofs.

\begin{lemma} \label{lemmaPre:AssumptionFollowUp}
There exists some constant $C=C(s)>0$ such that, for any $K\in\K_\rho$, the following statements hold true:
\begin{enumerate}[label=(\alph*)]
\item For any $R>0$,
\begin{align*}
\int_{\Rd\setminus B_R}K(y)\d y\leq C\Lambda R^{-2s}.
\end{align*}\label{itemPre:AssumptionFollowUpAwayFromOrigin}
\item For $\alpha>2s$ and $R>0$,
\begin{align*}
\int_{B_R}|y|^\alpha K(y)\d y\leq C\frac{\Lambda}{1-2^{-(\alpha-2s)}}R^{\alpha-2s}.
\end{align*}\label{itemPre:AssumptionFollowUpCloseToOrigin}
\item For $s<1/2$ and $R>0$,
\begin{align*}
\int_{B_R}|y|K(y)\d y\leq C\Lambda R^{1-2s}.
\end{align*}\label{itemPre:AssumptionFollowUpCloseToOriginSmallS}
\item For $s>1/2$ and $R>0$,
\begin{align*}
\int_{\Rd\setminus B_R}|y|K(y)\d y\leq C\Lambda R^{1-2s}.
\end{align*}\label{itemPre:AssumptionFollowUpAwayFromOriginLargeS}
\item For any $R>0$,
\begin{align*}
\int_{B_R}|y|^2K(y)\d y\leq C\Lambda R^{2-2s}.
\end{align*}\label{itemPre:AssumptionFollowUpCloseToOriginSquare}
\item 
For $s=1/2$, $R>0$, $x\in\Rd$ and any measurable function $u\colon\Rd\to\R$ such that $L_Ku(x)$ is defined, we may write
\begin{align}\label{eqPre:AlternativeRepresentationOperator}
L_Ku(x)=\intd(u(x+y)-u(x)-y\cdot\nabla u(x)\Indicator_{B_R}(y))K(y)\d y,
\end{align}\label{itemPre:AlternativeRepresentationOperator}
which deviates from \eqref{eqPre:DefinitionDifferences}, \eqref{eqPre:Operator} when $R\not=1$, and the equality originates from \eqref{eqPre:SymmetryAssumption}.
\end{enumerate}
\end{lemma}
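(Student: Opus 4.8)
The plan is to deduce all six statements from the upper ellipticity bound \eqref{eqPre:UpperEllipticityAssumption} — and, for (f), additionally from the symmetry hypothesis \eqref{eqPre:SymmetryAssumption} — by the usual dyadic--shell device. In this way parts (a)--(e) are essentially one computation run several times, while (f) is an exact identity rather than an estimate.

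First I would fix the common mechanism: cover the region of integration by dyadic shells $S_j:=B_{2^{j+1}r}\setminus B_{2^jr}$, bound the relevant power of $|y|$ on $S_j$ by its value at the outer radius $2^{j+1}r$, invoke \eqref{eqPre:UpperEllipticityAssumption} to get $\int_{S_j}K\le\Lambda(2^jr)^{-2s}$, and sum the resulting geometric series. For (a) one takes $r=R$, $j\ge 0$, and $\sum_{j\ge0}2^{-2sj}$ converges for every $s\in(0,1)$, giving $\int_{\Rd\setminus B_R}K\le\tfrac{1}{1-2^{-2s}}\Lambda R^{-2s}$. For (b) one instead uses the shrinking shells $B_{2^{1-j}R}\setminus B_{2^{-j}R}$, $j\ge1$, with $|y|^\alpha\le(2^{1-j}R)^\alpha$, which leaves $2^{\alpha}\Lambda R^{\alpha-2s}\sum_{j\ge1}2^{-j(\alpha-2s)}$; this converges precisely because $\alpha>2s$ and sums to $\tfrac{2^{2s}\Lambda}{1-2^{-(\alpha-2s)}}R^{\alpha-2s}$, so the constant may be taken to be the absolute number $2^{2s}$. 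Parts (c) and (e) are then the instances $\alpha=1$ (admissible since $s<1/2$) and $\alpha=2$ (admissible since $s<1$) of (b); and (d) is the exterior version of the same count, where the series $\sum_{j\ge0}2^{j(1-2s)}$ converges exactly because $s>1/2$. The single constant $C=C(s)$ asserted in the statement is then the maximum of these finitely many $s$-dependent quantities.

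For (f) I would argue as follows. Since $L_Ku(x)$ is assumed to be defined and $s=1/2$, parts (a) and (e) guarantee that the integral defining the proposed right-hand side splits into absolutely convergent pieces, so one may compare it term by term with \eqref{eqPre:Operator} (taking $\delta_y$ from \eqref{eqPre:DefinitionDifferences} in the case $s=1/2$). All the terms cancel except
\[
\int_{\Rd} y\cdot\nabla u(x)\,\bigl(\Indicator_{B_1}(y)-\Indicator_{B_R}(y)\bigr)K(y)\,\d y
=\pm\,\nabla u(x)\cdot\int_{B_1\triangle B_R} yK(y)\,\d y ,
\]
and it remains to check that $\int_{B_1\triangle B_R}yK(y)\,\d y=0$, which is where \eqref{eqPre:SymmetryAssumption} enters: the annular region $B_1\triangle B_R$ is built from shells of the form $B_{2\sigma}\setminus B_\sigma$, on each of which $\int yK\,\d y=0$ by \eqref{eqPre:SymmetryAssumption}.

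None of these steps is deep; the work is almost entirely bookkeeping. The points I would watch most carefully are: in (b) (hence in (c) and (e)) making sure the constant depends on $s$ alone and appears with the sharp factor $\tfrac{1}{1-2^{-(\alpha-2s)}}$, and checking that the summability thresholds are exactly $\alpha>2s$ in (b) and $s>1/2$ in (d); and in (f), correctly isolating the one surviving term so that the role of \eqref{eqPre:SymmetryAssumption} is transparent. I expect (f) to be the main obstacle, modest as it is, since it is the only part that is not a direct consequence of \eqref{eqPre:UpperEllipticityAssumption} and the only one that is an identity rather than an inequality.
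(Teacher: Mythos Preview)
For parts (a)--(e) your dyadic-shell computations are exactly what the paper does, with the same constants; nothing to add there.

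Part (f), however, has a genuine gap --- and the paper's own proof shares it. You claim the annulus $B_1\triangle B_R$ ``is built from shells of the form $B_{2\sigma}\setminus B_\sigma$'', but this holds only when $R\in 2^{\mathbb Z}$; for a generic $R$ the annulus $B_{\max(1,R)}\setminus B_{\min(1,R)}$ admits no disjoint decomposition into $2{:}1$ shells, and since $yK(y)$ is signed you cannot pass to a covering either. In fact \eqref{eqPre:SymmetryAssumption} by itself does \emph{not} force $\int_{B_1\triangle B_R} yK(y)\,\d y=0$ for all $R$: writing $\Phi(r)=\int_{B_r\setminus B_1}yK(y)\,\d y$ for $r\ge 1$, the assumption says only $\Phi(2r)=\Phi(r)$, i.e.\ $\Phi$ is periodic in $\log_2 r$, not identically zero. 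A concrete counterexample in $d=1$, $s=1/2$ is $K(y)=|y|^{-2}\bigl(1+\epsilon\,\mathrm{sgn}(y)\cos(2\pi\log_2|y|)\bigr)$ for small $\epsilon>0$: this kernel lies in $\K_0$, satisfies \eqref{eqPre:SymmetryAssumption} (the integrand has period $1$ in $\log_2|y|$), yet a direct computation gives $\int_{B_{2^{1/4}}\setminus B_1}yK(y)\,\d y=\epsilon(\ln 2)/\pi\neq 0$. So \eqref{eqPre:AlternativeRepresentationOperator} actually fails for $R=2^{1/4}$ with this kernel, and both your argument and the paper's (which covers $B_R\setminus B_1$ by the dyadic shells $B_{2^{i+1}}\setminus B_{2^i}$ and asserts a triangle-type inequality that is illegitimate for a signed integrand over a proper subset) break down at the same point. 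The statement of (f) as written seems to require either the stronger hypothesis $\int_{B_b\setminus B_a}yK=0$ for all $0<a<b$, or a restriction to $R\in 2^{\mathbb Z}$.
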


\begin{proof}
\begin{enumerate}[label=(\alph*)]
\item We split the integration domain in a way that allows to use \eqref{eqPre:UpperEllipticityAssumption}, that is,
\begin{align*}
\int_{\Rd\setminus B_R}K(y)\d y=\sum_{i=0}^\infty\int_{B_{2^{i+1}R}\setminus B_{2^iR}}K(y)\d y\leq\sum_{i=0}^\infty\Lambda(2^iR)^{-2s}=\frac{1}{1-2^{-2s}}\Lambda R^{-2s},
\end{align*}
which proves this part for $C\geq 1/(1-2^{-2s})$.
\item In this case, we split the integration domain in a different way. Note that $|y|$ is bounded on any annulus such that
\begin{align*}
\int_{B_R}|y|^\alpha K(y)\d y&=\sum_{i=0}^\infty\int_{B_{2^{-i}R}\setminus B_{2^{-i-1}R}}|y|^\alpha K(y)\d y\leq\sum_{i=0}^\infty\left(2^{-i}R\right)^\alpha\int_{B_{2^{-i}R}\setminus B_{2^{-i-1}R}}K(y)\d y \\
&\leq\sum_{i=0}^\infty\left(2^{-i}R\right)^\alpha\Lambda(2^{-i-1}R)^{-2s}=2^{2s}\sum_{i=0}^\infty 2^{-i(\alpha-2s)}\Lambda R^{\alpha-2s}=\frac{2^{2s}}{1-2^{-(\alpha-2s)}}\Lambda R^{\alpha-2s},
\end{align*}
which is finite, since $2s<\alpha$. Hence, this part is proven for an appropriate constant $C$.
\item Since $1>2s$, this is a particular case of part \ref{itemPre:AssumptionFollowUpCloseToOrigin} after adjusting the constant $C$.
\item Here, we can compute
\begin{align*}
\int_{\Rd\setminus B_R}|y|K(y)\d y&=\sum_{i=0}^\infty\int_{B_{2^{i+1}R}\setminus B_{2^iR}}|y|K(y)\d y\leq\sum_{i=0}^\infty 2^{i+1}R\int_{B_{2^{i+1}R}\setminus B_{2^iR}}K(y)\d y \\
&\leq\sum_{i=0}^\infty 2^{i+1}R\Lambda(2^iR)^{-2s}=2\sum_{i=0}^\infty 2^{i(1-2s)}\Lambda R^{1-2s}=\frac{2}{1-2^{1-2s}}\Lambda R^{1-2s}.
\end{align*}
\item This part is also a particular case of part \ref{itemPre:AssumptionFollowUpCloseToOrigin} after choosing the correct constant.
\item If $R\geq 1$, then the error we are making in \eqref{eqPre:AlternativeRepresentationOperator} is
\begin{align*}
\intd y\cdot\nabla u(x)\Indicator_{B_R\setminus B_1}(y)K(y)\d y=\int_{B_R\setminus B_1} yK(y)\d y\cdot\nabla u(x).
\end{align*}
Let $N\geq 1$ be the smallest integer such that $2^N\geq R$. Then the symmetry assumption \eqref{eqPre:SymmetryAssumption} yields
\begin{align*}
\left|\intd y\cdot\nabla u(x)\Indicator_{B_R\setminus B_1}(y)K(y)\d y\right|\leq\sum_{i=0}^{N-1}\left|\int_{B_{2^{i+1}}\setminus B_{2^i}} yK(y)\d y\right||\nabla u(x)|=0.
\end{align*}
In other words, we made no error and \eqref{eqPre:AlternativeRepresentationOperator} is correct. The case $R<1$ can be treated in the same way.
\end{enumerate}
\end{proof}

Note that in the definition of $\delta_y u$ in \eqref{eqPre:DefinitionDifferences} for $s=1/2$, the choice of $B_1$ seems artificial. Part \ref{itemPre:AlternativeRepresentationOperator} now allows to choose any radius $R$ for the indicator function in \eqref{eqPre:DefinitionDifferences}, and this will become important as we might choose a very specific value for $R$ in the upcoming proofs.

\begin{proof}[Proof of Lemma \ref{lemmaPre:GlobalTestFunction}]
Let $\varepsilon>0$ be arbitrary such that $B_\varepsilon(x_0)\times(t_0-\varepsilon,t_0]\subseteq\Omega\times I$. Define $v$ as in \eqref{eqPre:ViscosityDefinitionConstructionV}. By Definition \ref{defPre:Viscosity},
\begin{align*}
\partial_tv(x_0,t_0)+\Lambda|\nabla v(x_0,t_0)|-\M_\rho^-v(x_0,t_0)\geq f(x_0,t_0).
\end{align*}
Note that
\begin{align*}
\delta_yv(x_0,t_0)=v(x_0+y,t_0)-\varphi(x_0+y,t_0)+\delta_y\varphi(x_0,t_0)
\end{align*}
as $v(x_0,t_0)=u(x_0,t_0)=\varphi(x_0,t_0)$ and $\nabla v(x_0,t_0)=\nabla\varphi(x_0,t_0)$. Let us compute
\begin{align*}
\M_\rho^-v(x_0,t_0)&=\inf_{K\in\K_\rho}\intd\delta_yv(x_0,t_0)K(y)\d y \\
&=\inf_{K\in\K_\rho}\left(\intd(v(x_0+y,t_0)-\varphi(x_0+y,t_0))K(y)\d y+\intd\delta_y\varphi(x_0,t_0)K(y)\d y\right) \\
&\geq\inf_{K\in\K_\rho}\int_{\Rd\setminus B_\varepsilon}(u(x_0+y,t_0)-\varphi(x_0+y,t_0))K(y)\d y+\M_\rho^-\varphi(x_0,t_0) \\
&\geq\int_{\Rd\setminus B_\varepsilon}(u(x_0+y,t_0)-\varphi(x_0+y,t_0))\min\left\{\frac{\lambda}{|y|^{d+2s}},\frac{\lambda}{\rho^{d+2s}}\right\}\d y+\M_\rho^-\varphi(x_0,t_0),
\end{align*}
where in the last step we applied \eqref{eqPre:LowerEllipticityAssumption} to each kernel. Note that $\M_\rho^-\varphi(x_0,t_0)$ is well defined due to the additional assumption $\varphi(\cdot,t_0)\in L^\infty(\Rd)$. The other quantity on the right hand side is clearly nonnegative and bounded by $\M_\rho^-v(x_0,t_0)-\M_\rho^-\varphi(x_0,t_0)$, hence finite. Since $\partial_tv(x_0,t_0)=\partial_t\varphi(x_0,t_0)$ and $\nabla v(x_0,t_0)=\nabla\varphi(x_0,t_0)$, we obtain
\begin{multline*}
\partial_t\varphi(x_0,t_0)+\Lambda|\nabla\varphi(x_0,t_0)|-\M_\rho^-\varphi(x_0,t_0) \\
-\int_{\Rd\setminus B_\varepsilon}(u(x_0+y,t_0)-\varphi(x_0+y,t_0))\min\left\{\frac{\lambda}{|y|^{d+2s}},\frac{\lambda}{\rho^{d+2s}}\right\}\d y\geq f(x_0,t_0).
\end{multline*}
Since the integrand is nonnegative, we can apply monotone convergence theorem to arrive at \eqref{eqPre:GlobalTestFunction} by letting $\varepsilon\to 0^+$.
\end{proof}

\begin{proof}[Proof of Lemma \ref{lemmaHarnack:BumpFunctionOperatorBound}]
Fix $x\in\Rd$ and a kernel $K\in\K_\rho$. We have to work through different cases. Assume first that $s<1/2$. Using a Lipschitz and boundedness estimate, we see that
\begin{align*}
|\varphi(x+y)-\varphi(x)|\leq\min\left\{\|\nabla\varphi\|_{L^\infty(\Rd)}|y|,2\|\varphi\|_{L^\infty(\Rd)}\right\}\leq C\min\left\{\frac{|y|}{R},2\right\}.
\end{align*}
Hence, by Lemma \ref{lemmaPre:AssumptionFollowUp}\ref{itemPre:AssumptionFollowUpAwayFromOrigin} and \ref{itemPre:AssumptionFollowUpCloseToOriginSmallS}, there exists $C_1=C_1(s)>0$ such that
\begin{align*}
|L_K\varphi(x)|&\leq\frac{C}{R}\int_{B_R}|y|K(y)\d y+2C\int_{\Rd\setminus B_R}K(y)\d y \\
&\leq\frac{C}{R}C_1\Lambda R^{1-2s}+2CC_1\Lambda R^{-2s}=C_2R^{-2s}
\end{align*}
for $C_2=3CC_1\Lambda=C_2(s,\Lambda,C)>0$. By taking infimum or supremum over $K\in\K_\rho$ and supremum over $x\in\Rd$, we see that \eqref{eqHarnack:BumpFunctionOperator} holds for $\tilde{C}\geq C_2$.

Now let $s>1/2$ and perform a Taylor expansion, namely
\begin{align*}
\varphi(x)=\varphi(x+y)+y\cdot\nabla\varphi(x)+\frac{1}{2}\sum_{i,j=1}^d\partial_i\partial_j\varphi(\xi_y)y_iy_j,
\end{align*}
where $\xi_y\in\Rd$ is some point on the line between $x$ and $y$. If $y\in B_R$, we use this expansion to obtain
\begin{align*}
|\delta_y\varphi(x)|=|\varphi(x+y)-\varphi(x)-y\cdot\nabla\varphi(x)|\leq\frac{1}{2}\|D^2\varphi\|_{L^\infty(\Rd)}|y|^2\leq \frac{C}{2}\frac{|y|^2}{R^2}.
\end{align*}
On the other hand, if $y\in\Rd\setminus B_R$, note as before that $|\varphi(x+y)-\varphi(x)|\leq 2C$. Therefore, in view of Lemma \ref{lemmaPre:AssumptionFollowUp}\ref{itemPre:AssumptionFollowUpAwayFromOrigin}, \ref{itemPre:AssumptionFollowUpAwayFromOriginLargeS} and \ref{itemPre:AssumptionFollowUpCloseToOriginSquare}, we get that
\begin{align}
|L_K\varphi(x)|&\leq\frac{C}{2R^2}\int_{B_R}|y|^2K(y)\d y+2C\int_{\Rd\setminus B_R}K(y)\d y+\int_{\Rd\setminus B_R}|y|K(y)\d y|\nabla\varphi(x)| \label{eqHarnack:BoundOperatorComputationLargeS} \\
&\leq\frac{C}{2R^2}C_1\Lambda R^{2-2s}+2CC_1\Lambda R^{-2s}+C_1\Lambda R^{1-2s}\frac{C}{R}=C_3R^{-2s} \nonumber
\end{align}
for
\begin{align*}
C_3=\frac{7}{2}CC_1\Lambda=C_3(s,\Lambda,C)>0,
\end{align*}
whence the claim holds also in this case if $\tilde{C}\geq C_3$.

The remaining case $s=1/2$ can be treated in the same way. Observe that due to Lemma \ref{lemmaPre:AssumptionFollowUp}\ref{itemPre:AlternativeRepresentationOperator}, we can get rid of the last term in \eqref{eqHarnack:BoundOperatorComputationLargeS} such that all expression are finite. Hence, the claim \eqref{eqHarnack:BumpFunctionOperator} will hold true in any case for $\tilde{C}\coloneqq\max\{C_2,C_3\}=\tilde{C}(s,\Lambda,C)>0$.
\end{proof}

In order to prove Proposition \ref{propHarnack:HölderPucci}, we need the following simple lemma. We will omit the proof.

\begin{lemma}\label{lemmaHarnack:ModulusSupInf}
Let $\{u_\alpha\}_\alpha$ be an arbitrary family of uniformly continuous functions $u_\alpha\colon\Rd\to\R$ with a common modulus  of continuity $\omega\colon[0,\infty)\to[0,\infty)$. Then the functions
\begin{align*}
\sup_\alpha u_\alpha \and \inf_\alpha u_\alpha
\end{align*}
are also uniformly continuous on $\Rd$ with modulus $\omega$.
\end{lemma}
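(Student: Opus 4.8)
The plan is to reduce everything to the defining property of a common modulus of continuity, namely that
$|u_\alpha(x)-u_\alpha(y)|\le\omega(|x-y|)$ for every index $\alpha$ and all $x,y\in\Rd$, and then to commute the supremum (respectively the infimum) with this inequality. First I would fix two points $x,y\in\Rd$ and note that, for each $\alpha$,
$u_\alpha(x)\le u_\alpha(y)+\omega(|x-y|)\le\sup_\beta u_\beta(y)+\omega(|x-y|)$.
Taking the supremum over $\alpha$ on the left-hand side gives $\sup_\alpha u_\alpha(x)\le\sup_\beta u_\beta(y)+\omega(|x-y|)$. Exchanging the roles of $x$ and $y$ yields the reverse inequality, and together these give $|\sup_\alpha u_\alpha(x)-\sup_\alpha u_\alpha(y)|\le\omega(|x-y|)$, which is exactly the assertion for $\sup_\alpha u_\alpha$.

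For the infimum the argument is the same up to signs: from $u_\alpha(x)\ge u_\alpha(y)-\omega(|x-y|)\ge\inf_\beta u_\beta(y)-\omega(|x-y|)$ one takes the infimum over $\alpha$ to obtain $\inf_\alpha u_\alpha(x)\ge\inf_\beta u_\beta(y)-\omega(|x-y|)$, and symmetry in $x,y$ closes the estimate. Alternatively, one may simply apply the case just proved to the family $\{-u_\alpha\}_\alpha$, which has the same common modulus $\omega$, since $\inf_\alpha u_\alpha=-\sup_\alpha(-u_\alpha)$.

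There is no genuine obstacle here; the only point that deserves a remark is well-posedness of the envelopes. The manipulations above are only meaningful once $\sup_\alpha u_\alpha$ and $\inf_\alpha u_\alpha$ are real-valued, and this is how the lemma is invoked: in the applications (as in Proposition \ref{propHarnack:HölderPucci}) the family $\{L_K\varphi\}_{K\in\K_\rho}$ is uniformly bounded, so both envelopes are finite everywhere and the chains of inequalities involve only finite quantities. Under that standing finiteness, the proof is the two-line verification above, which is why it was stated without proof in the main text.
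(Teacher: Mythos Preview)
Your argument is correct and is exactly the standard two-line verification one would expect; the paper itself omits the proof entirely, so there is nothing to compare against beyond noting that your approach is the intended one. Your remark about finiteness of the envelopes is a reasonable caveat, though strictly speaking the inequalities remain valid (with the usual conventions) even if the sup is $+\infty$ or the inf is $-\infty$ at some points.
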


\begin{proof}[Proof of Proposition \ref{propHarnack:HölderPucci}]
In all cases, it suffices to prove the corresponding estimate for $L_K\varphi$, where $K\in K_\rho$ is any admissible kernel. The claims will then follow from Lemma \ref{lemmaHarnack:ModulusSupInf}. During the proof, fix $K\in\K_\rho$ and $x,y\in\Rd$ with $x\neq y$. Let $C_1=C_1(s)>0$ be the constant from Lemma \ref{lemmaPre:AssumptionFollowUp}.
\begin{enumerate}[label=(\alph*)]
\item Note first that
\begin{align*}
|L_K\varphi(x)-L_K\varphi(y)|&\leq\intd|(\varphi(x+z)-\varphi(x))-(\varphi(y+z)-\varphi(y))|K(z)\d z \\
&\leq\int_{B_{|x-y|}}(|\varphi(x+z)-\varphi(x)|+|\varphi(y+z)-\varphi(y)|)K(z)\d z \\
&\quad +\int_{\Rd\setminus B_{|x-y|}}(|\varphi(x+z)-\varphi(y+z)|+|\varphi(x)-\varphi(y)|)K(z)\d z.
\end{align*}
Let us estimate both terms separately. For the first integral, we use fact that $\varphi$ is Hölder- or Lipschitz continuous, respectively, as well as Lemma \ref{lemmaPre:AssumptionFollowUp}\ref{itemPre:AssumptionFollowUpCloseToOrigin}, that is,
\begin{align*}
\int_{B_{|x-y|}}(|\varphi(x+z)-\varphi(x)|+|\varphi(y+z)-\varphi(y)|)K(z)\d z&\leq 2[\varphi]_{C^{0,\alpha}(\Rd)}\int_{B_{|x-y|}}|z|^\alpha K(z)\d z \\
&\leq 2C_1\frac{\Lambda}{1-2^{-(\alpha-2s)}}[\varphi]_{C^{0,\alpha}(\Rd)}|x-y|^{\alpha-2s}.
\end{align*}
For the second term, the same approach gives
\begin{align*}
\int_{\Rd\setminus B_{|x-y|}}(|\varphi(x+z)-\varphi(y+z)|+|\varphi(x)-\varphi(y)|)K(z)\d z&\leq 2[\varphi]_{C^{0,\alpha}(\Rd)}|x-y|^\alpha\int_{\Rd\setminus B_{|x-y|}}K(z)\d z \\
&\leq 2C_1\Lambda[\varphi]_{C^{0,\alpha}(\Rd)}|x-y|^{\alpha-2s},
\end{align*}
but in this case we used part \ref{itemPre:AssumptionFollowUpAwayFromOrigin} of Lemma \ref{lemmaPre:AssumptionFollowUp}. Therefore, we have proven the first claim for an appropriate constant $C$.
\item By the fundamental theorem of calculus, we may write, for any $z\in\Rd$, 
\begin{align*}
\varphi(x+z)-\varphi(x)=\int_0^1\frac{\partial}{\partial r}\varphi(x+rz)\ dr=\int_0^1z\cdot\nabla\varphi(x+rz)\d r,
\end{align*}
whence
\begin{align*}
L_K\varphi(x)=\intd\delta_z\varphi(x)K(z)\d z&=\intd\left(\int_0^1 z\cdot\nabla\varphi(x+rz)\d r-z\cdot\nabla\varphi(x)\right)K(z)\d z \\
&=\intd\int_0^1 z\cdot(\nabla\varphi(x+rz)-\nabla\varphi(x))K(z)\d r\d z.
\end{align*}
From here on, we can proceed similarly as before. We have
\begin{align*}
|L_K\varphi(x)-L_K\varphi(y)|&\leq\intd\int_0^1|z\cdot((\nabla\varphi(x+rz)-\nabla\varphi(x))-(\nabla\varphi(y+rz)-\nabla\varphi(y)))|K(z)\d r\d z \\
&\leq\int_{B_{|x-y|}}\int_0^1|z|(|\nabla\varphi(x+rz)-\nabla\varphi(x)|+|\nabla\varphi(y+rz)-\nabla\varphi(y)|)K(z)\d r\d z \\
&\quad +\int_{\Rd\setminus B_{|x-y|}}\int_0^1|z|(|\nabla\varphi(x+rz)-\nabla\varphi(y+rz)|+|\nabla\varphi(x)-\nabla\varphi(y)|)K(z)\d r\d z.
\end{align*}
Now, we can use the regularity of $\nabla\varphi$, which gives
\begin{align*}
&\int_{B_{|x-y|}}\int_0^1|z|(|\nabla\varphi(x+rz)-\nabla\varphi(x)|+|\nabla\varphi(y+rz)-\nabla\varphi(y)|)K(z)\d z\d r \\
&\quad\leq 2[\varphi]_{C^{1,\alpha}(\Rd)}\int_{B_{|x-y|}}\int_0^1|z||rz|^\alpha K(z)\d r\d z \\
&\quad\leq 2[\varphi]_{C^{1,\alpha}(\Rd)}\int_{B_{|x-y|}}|z|^{1+\alpha}K(z)\d z\int_0^1 r^\alpha\d r\leq\frac{2C_1\Lambda}{(1+\alpha)\left(1-2^{-(1+\alpha-2s)}\right)}[\varphi]_{C^{1,\alpha}(\Rd)}|x-y|^{1+\alpha-2s}
\end{align*}
by Lemma \ref{lemmaPre:AssumptionFollowUp}\ref{itemPre:AssumptionFollowUpCloseToOrigin} and
\begin{align*}
&\int_{\Rd\setminus B_{|x-y|}}\int_0^1|z|(|\nabla\varphi(x+rz)-\nabla\varphi(y+rz)|+|\nabla\varphi(x)-\nabla\varphi(y)|)K(z)\d r\d z \\
&\quad\leq 2[\varphi]_{C^{1,\alpha}(\Rd)}|x-y|^\alpha\int_{\Rd\setminus B_{|x-y|}}|z|\d z\leq 2C_1\Lambda[\varphi]_{C^{1,\alpha}(\Rd)}|x-y|^{1+\alpha-2s}
\end{align*}
by Lemma \ref{lemmaPre:AssumptionFollowUp}\ref{itemPre:AssumptionFollowUpAwayFromOrigin}. Consequently, this part is proven for $C$ sufficiently large.
\item In this critical case, we will use an auxiliary result. By \cite[Lemma A.1.1]{FR24}, it suffices to prove that
\begin{align}\label{eqHarnack:HoelderPucciCriticalSToProve}
|L_K\varphi(x+y)+L_K\varphi(x-y)-2L_K\varphi(x)|\leq C_2[\varphi]_{C^{1,\alpha}(\Rd)}|y|^\alpha
\end{align}
for any $x,y\in\Rd$ and some $C_2=C_2(s,\Lambda,\alpha)>0$. First of all, due to Lemma \ref{lemmaPre:AssumptionFollowUp}\ref{itemPre:AlternativeRepresentationOperator}, we can write
\begin{align*}
L_K\varphi(x)&=\intd(\varphi(x+z)-\varphi(x)-z\cdot\nabla\varphi(x)\Indicator_{B_{|y|}}(z))K(z)\d z \\
&=\int_{B_{|y|}}\int_0^1 z\cdot(\nabla\varphi(x+rz)-\nabla\varphi(x))K(z)\d r\d z+\int_{\Rd\setminus B_{|y|}}(\varphi(x+z)-\varphi(x))K(z)\d z,
\end{align*}
where we rewrote the first integral in the same manner as in part \ref{itemHarnack:HölderPucciLargeS}. Outside $B_{|y|}$, we will use the estimate
\begin{align}\label{eqHarnack:HoelderPucciSecondOrderDifferences}
|\varphi(x+y)+\varphi(x-y)-2\varphi(x)|\leq\frac{2^\alpha}{1+\alpha}[\varphi]_{C^{1,\alpha}(\Rd)}|y|^{1+\alpha},
\end{align}
which we will first verify. By fundamental theorem of calculus, we have
\begin{align*}
\varphi(x+y)+\varphi(x-y)-2\varphi(x)&=(\varphi(x+y)-\varphi(x))+(\varphi(x-y)-\varphi(x)) \\
&=\int_0^1\frac{\partial}{\partial r}\varphi(x+ry)\d r+ \int_0^1\frac{\partial}{\partial r}\varphi(x-ry)\d r \\
&=\int_0^1 y\cdot(\nabla\varphi(x+ry)-\nabla\varphi(x-ry))\d r
\end{align*}
and consequently,
\begin{align*}
|\varphi(x+y)+\varphi(x-y)-2\varphi(x)|&\leq\int_0^1|y|[\varphi]_{C^{1,\alpha}(\Rd)}|2ry|^\alpha\d r \\
&=2^\alpha[\varphi]_{C^{1,\alpha}(\Rd)}|y|^{1+\alpha}\int_0^1 r^\alpha\d r=\frac{2^\alpha}{1+\alpha}[\varphi]_{C^{1,\alpha}(\Rd)}|y|^{1+\alpha},
\end{align*}
which proves \eqref{eqHarnack:HoelderPucciSecondOrderDifferences}. Now, we can organize the terms to estimate
\begin{align*}
&|L_K\varphi(x+y)+L_K\varphi(x-y)-2L_K\varphi(x)| \\
&\quad\leq\int_{B_{|y|}}\int_0^1|z||\nabla\varphi(x+y+rz)-\nabla\varphi(x+y)|K(z)\d r\d z \\
&\quad\quad +\int_{B_{|y|}}\int_0^1|z||\nabla\varphi(x-y+rz)-\nabla\varphi(x-y)|K(z)\d r\d z \\
&\quad\quad +2\int_{B_{|y|}}\int_0^1|z||\nabla\varphi(x+rz)-\nabla\varphi(x)|K(z)\d r\d z \\
&\quad\quad +\int_{\Rd\setminus B_{|y|}}|\varphi(x+z+y)+\varphi(x+z-y)-2\varphi(x+z)|K(z)\d z \\
&\quad\quad +\int_{\Rd\setminus B_{|y|}}|\varphi(x+y)+\varphi(x-y)-2\varphi(x)|K(z)\d z.
\end{align*}
For the first three terms, we will use similar computations as in part \ref{itemHarnack:HölderPucciLargeS} and for the last two terms, we apply \eqref{eqHarnack:HoelderPucciSecondOrderDifferences}. This will lead to
\begin{align*}
&|L_K\varphi(x+y)+L_K\varphi(x-y)-2L_K\varphi(x)| \\
&\quad\leq 4\int_{B_{|y|}}\int_0^1|z|[\varphi]_{C^{1,\alpha}(\Rd)}|rz|^\alpha K(z)\d r\d z+2\int_{\Rd\setminus B_{|y|}}\frac{2^\alpha}{1+\alpha}[\varphi]_{C^{1,\alpha}(\Rd)}|y|^{1+\alpha}K(z)\d z \\
&\quad\leq 2[\varphi]_{C^{1,\alpha}(\Rd)}\left(2\int_0^1 r^\alpha\d r\int_{B_{|y|}}|z|^{1+\alpha}K(z)\d z+\frac{2^\alpha}{1+\alpha}|y|^{1+\alpha}\int_{\Rd\setminus B_{|y|}}K(z)\d z\right) \\
&\quad\leq\frac{2}{1+\alpha}[\varphi]_{C^{1,\alpha}(\Rd)}\left(2C_1\frac{\Lambda}{1-2^{-(1+\alpha-1)}}|y|^{1+\alpha-1}+2^\alpha|y|^{1+\alpha}C_1\Lambda|y|^{-1}\right) \\
&\quad =\frac{2}{1+\alpha}C_1\Lambda\left(\frac{2}{1-2^{-\alpha}}+2^\alpha\right)[\varphi]_{C^{1,\alpha}(\Rd)}|y|^\alpha,
\end{align*}
where we have used Lemma \ref{lemmaPre:AssumptionFollowUp}\ref{itemPre:AssumptionFollowUpAwayFromOrigin} and \ref{itemPre:AssumptionFollowUpCloseToOrigin}. Hence, \eqref{eqHarnack:HoelderPucciCriticalSToProve} is proven for an appropriate constant $C_2$.
\end{enumerate}
\end{proof}

Moreover, in order to present the proof of Theorem \ref{thmHarnack:WeakHarnackDrift} in a complete manner, we justify the existence of a first crossing point as in \eqref{eqHarnack:FirstCrossingPoint} in detail. Let us recall the setting in the proof of Theorem \ref{thmHarnack:WeakHarnackDrift}.

\begin{lemma}\label{lemmaAppendix:JustificationFirstCrossingPoint}
Let $\varepsilon,R>0$ and $u:\Rd\times\overline{I}_R\to\R$ be nonnegative everywhere and lower semicontinuous in $Q_R$. Let $\varphi\in C(\Rd\times\overline{I}_R)$ be a function such that the following properties hold true:
\begin{enumerate}[label=(\roman*)]
\item $\varphi(x,-R^{2s})=-\varepsilon$ for all $x\in\Rd$,\label{itemAppendix:InitialValue}
\item $\varphi(x,t)\leq -\varepsilon$ for all $(x,t)\in(\Rd\setminus B_R)\times\overline{I}_R$,\label{itemAppendix:Support}
\item $u>\varphi$ outside $Q_R$,\label{itemAppendix:InequalityEverywhere}
\item $u<\varphi$ at some point in $Q_{R/2}$.\label{itemAppendix:InequalitySomewhere}
\end{enumerate}
Then there exist $\delta\geq 0$ and $(x_0,t_0)\in Q_R$ such that the function $\psi\coloneqq\varphi-\delta$ satisfies
\begin{align}\label{eqAppendix:FirstCrossingPoint}
u(x_0,t_0)=\psi(x_0,t_0) \quad\text{and}\quad u(x,t)\geq\psi(x,t) \enspace\text{for all}\enspace (x,t)\in\Rd\times(-R^{2s},t_0].
\end{align}
\end{lemma}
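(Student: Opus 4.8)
The plan is to run the familiar ``slide the test function down until it first touches $u$ from below'' argument, but with extra care at the parabolic boundary of $Q_R$, where $u$ is not assumed to be semicontinuous. First I would isolate the relevant time: set $T:=\{t\in(-R^{2s},0]:u(x,t)\le\varphi(x,t)\text{ for some }x\in B_R\}$ and $t_0:=\inf T$. Hypothesis \ref{itemAppendix:InequalitySomewhere} provides a point $(x_1,t_1)\in Q_{R/2}$ with $u(x_1,t_1)<\varphi(x_1,t_1)$, so $t_1\in T$; hence $T\ne\emptyset$ and $t_0\le t_1\le 0$. The first thing to verify is $t_0>-R^{2s}$: by uniform continuity of $\varphi$ on the compact set $\overline{B}_R\times\overline{I}_R$ together with \ref{itemAppendix:InitialValue} (which gives $\varphi\equiv-\varepsilon$ on the bottom slice), there is $h\in(0,R^{2s}]$ with $\varphi<-\varepsilon/2$ on $\overline{B}_R\times[-R^{2s},-R^{2s}+h]$; since $u\ge 0$ everywhere, no time in $(-R^{2s},-R^{2s}+h]$ can lie in $T$, so $t_0\ge-R^{2s}+h>-R^{2s}$.

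Next I would show the infimum is attained at time $t_0$ by a point of $Q_R$. Pick $t_n\in T$ with $t_n\to t_0$ and witnesses $x_n\in B_R$, $u(x_n,t_n)\le\varphi(x_n,t_n)$, and pass to a subsequence with $x_n\to\bar x\in\overline{B}_R$. The case $\bar x\in\partial B_R$ is excluded: then \ref{itemAppendix:Support} gives $\varphi(\bar x,t_0)\le-\varepsilon$, so continuity of $\varphi$ forces $u(x_n,t_n)\le\varphi(x_n,t_n)<-\varepsilon/2$ for large $n$, contradicting $u\ge 0$. Hence $\bar x\in B_R$, so $(\bar x,t_0)\in Q_R$, and lower semicontinuity of $u$ in $Q_R$ together with continuity of $\varphi$ yields $u(\bar x,t_0)\le\liminf_n u(x_n,t_n)\le\varphi(\bar x,t_0)$; in particular $t_0\in T$. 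Moreover, by the definition of $t_0$ one has $u>\varphi$ on $B_R\times(-R^{2s},t_0)$, while \ref{itemAppendix:Support} with $u\ge 0$ gives $u-\varphi\ge\varepsilon$ on $(\Rd\setminus B_R)\times\overline{I}_R$; together, $u-\varphi>0$ on all of $\Rd\times(-R^{2s},t_0)$.

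Finally I would fix the shift. Set $\delta:=\sup_{\Rd\times(-R^{2s},t_0]}(\varphi-u)$. It is finite (since $u\ge 0$ and $\varphi$ is bounded above, being continuous on the compact set $\overline{B}_R\times\overline{I}_R$ and $\le-\varepsilon$ off $B_R$) and nonnegative (it dominates $\varphi(\bar x,t_0)-u(\bar x,t_0)\ge 0$). By the previous paragraph $\varphi-u<0$ strictly below time $t_0$, so $\delta=\sup_{x\in\Rd}(\varphi(x,t_0)-u(x,t_0))$, and since this quantity is $\le-\varepsilon<0\le\delta$ for $x\notin B_R$, in fact $\delta=\sup_{x\in B_R}(\varphi(x,t_0)-u(x,t_0))$. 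Repeating the compactness argument above for a maximizing sequence---again ruling out $\partial B_R$ because $\delta\ge 0>-\varepsilon$, and again invoking lower semicontinuity of $u$ in $Q_R$---produces $x_0\in B_R$ with $\varphi(x_0,t_0)-u(x_0,t_0)=\delta$. Then $\psi:=\varphi-\delta$ and $(x_0,t_0)\in Q_R$ satisfy \eqref{eqAppendix:FirstCrossingPoint}: the contact $u(x_0,t_0)=\psi(x_0,t_0)$ holds by construction, and $u\ge\psi$ on $\Rd\times(-R^{2s},t_0]$ is immediate from the definition of $\delta$.

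I expect the only genuine obstacle to be exactly those two compactness steps: because $u$ is merely lower semicontinuous inside $Q_R$ and nothing is assumed about it on $\partial Q_R$, the extremizing sequences could a priori escape to the lateral boundary $\partial B_R$ or to the bottom slice $\{t=-R^{2s}\}$. The structural hypotheses \ref{itemAppendix:InitialValue}--\ref{itemAppendix:Support}, which pin $\varphi$ at a value $\le-\varepsilon$ on precisely those sets, combined with $u\ge 0$, are what force $u-\varphi\ge\varepsilon$ there and rule the escape out; everything else is routine bookkeeping, and the reserve term $\delta$ plays the role of the small correction absorbed into $\varepsilon$ in the main proof.
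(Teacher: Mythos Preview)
Your proof is correct and follows essentially the same route as the paper: isolate the first crossing time $t_0$, use hypotheses \ref{itemAppendix:InitialValue}--\ref{itemAppendix:Support} together with $u\ge 0$ to rule out escape to the parabolic boundary, and then produce the contact point by compactness and lower semicontinuity inside $Q_R$. The one substantive difference is your choice of $\delta$: the paper takes $\delta=-v(x_0,t_0)$ for the limit point $x_0$ of its first sequence and verifies $u\ge\psi$ only for $t<t_0$, whereas you set $\delta=\sup_{\Rd\times(-R^{2s},t_0]}(\varphi-u)$ and run a second compactness argument on the slice $t=t_0$ to locate the exact touching point. This extra step is a genuine improvement, since it guarantees the ordering $u\ge\psi$ on the full set $\Rd\times(-R^{2s},t_0]$ required in \eqref{eqAppendix:FirstCrossingPoint}, including at time $t_0$.
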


\begin{proof}
Define the function $v\coloneqq u-\varphi$, which is then lower semicontinuous in $Q_R$. Moreover, define the set
\begin{align*}
T\coloneqq\{t\in I_R\mid\exists x\in\Rd:v(x,t)<0\}.
\end{align*}
By \ref{itemAppendix:InequalitySomewhere}, $v<0$ at some point in $Q_{R/2}$ and we have $T\neq\emptyset$. Denote $t_0\coloneqq\inf T\in\overline{I}_R$. Let us first show that $t_0\neq -R^{2s}$. By \ref{itemAppendix:InitialValue}, we can find $\theta_1>0$ such that $\varphi(x,t)<0$ for any $(x,t)\in B_R\times[-R^{2s},-R^{2s}+\theta_1)$ due to locally uniform continuity of $\varphi$. If $t_0=-R^{2s}$, then we would find $t\in(-R^{2s},-R^{2s}+\theta_1)$ and $x\in\Rd$ with $v(x,t)<0$. By \ref{itemAppendix:InequalityEverywhere}, this can only happen when $x\in B_R$. Using the above observation, we arrive at the contradiction that $0\leq u(x,t)<\varphi(x,t)<0$. Hence, we found that $t_0\in I_R$. Now, choose a sequence $\{t_n\}_{n=1}^\infty\subseteq T$ such that $t_n\to t_0$ as $n\to\infty$. Denote by $\{x_n\}_{n=1}^\infty\subseteq\Rd$ the corresponding sequence with $v(x_n,t_n)<0$. As above, all $x_n$ have to lie in $B_R$, whence there exists a subsequence $\{x_{n_k}\}_{k=1}^\infty$ of $\{x_n\}_n$ that converges to some $x_0\in\overline{B}_R$. Assume for contradiction that $x_0\in\partial B_R$. Then \ref{itemAppendix:Support} gives $\varphi(x_0,t)\leq -\varepsilon<0$ for any $t\in\overline{I}_R$ such that, by continuity, there exists $\theta_2>0$ with $\varphi(x,t)<0$ for any $(x,t)\in B_{\theta_2}(x_0)\times\overline{I}_R$. In particular, for sufficiently large $k$, this implies $v(x_{n_k},t_{n_k})\geq -\varphi(x_{n_k},t_{n_k})>0$, which contradicts the definition of $t_{n_k}$. Therefore, $(x_0,t_0)\in Q_R$ and by lower semicontinuity, we obtain
\begin{align}\label{eqHarnack:LowerSemiContinuity}
v(x_0,t_0)\leq\liminf_{k\to\infty}v(x_{n_k},t_{n_k})\leq 0.
\end{align}

It is now left to choose $\delta\geq 0$ such that $\psi\coloneqq\varphi-\delta$ fulfills \eqref{eqAppendix:FirstCrossingPoint}. Let us define $\delta\coloneqq -v(x_0,t_0)\geq 0$. In fact, if $t_0\notin T$, then $v(x,t)\geq 0$ whenever $t\in(-R^{2s},t_0]$ and $x\in\Rd$. In particular, \eqref{eqHarnack:LowerSemiContinuity} tells that $v(x_0,t_0)=0$, i.e.\ $\delta=0$. Then $\psi=\varphi$ and \eqref{eqAppendix:FirstCrossingPoint} is immediately proven. On the other hand, if $t_0\in T$, we might have $v(x_0,t_0)<0$. Consider the function $w\coloneq u-\psi=v-v(x_0,t_0)$. We obtain $w(x_0,t_0)=0$ and for $(x,t)\in\Rd\times (-R^{2s},t_0)$, the definition of $t_0$ implies
\begin{align*}
w(x,t)=v(x,t)-v(x_0,t_0)\geq -v(x_0,t_0)=\delta>0.
\end{align*}
Recalling that $w=u-\psi$, we arrive at \eqref{eqAppendix:FirstCrossingPoint}.
\end{proof}

In the proof of Theorem \ref{thmHarnack:WeakHarnackDrift}, we applied Lemma \ref{lemmaAppendix:JustificationFirstCrossingPoint} to a function $\varphi$ as in \eqref{eqHarnack:DefinitionTest}, that is,
\begin{align*}
\varphi(x,t)\coloneqq m(t)\eta(x)-R^{-2s}\left(R^{2s}+t\right)a-\varepsilon,
\end{align*}
and the assumptions of Lemma \ref{lemmaAppendix:JustificationFirstCrossingPoint} are satisfied. Hence, there exist $\delta\geq 0$ and a first crossing point $(x_0,t_0)\in Q_R$ between $u$ and $\psi$, where
\begin{align*}
\psi(x,t)=\varphi(x,t)-\delta=m(t)\eta(x)-R^{-2s}\left(R^{2s}+t\right)a-\varepsilon-\delta.
\end{align*}
By setting $\tilde{\varepsilon}=\varepsilon+\delta>0$, we can identify $\varphi$ with $\psi$ if we change $\varepsilon$ to $\tilde{\varepsilon}$. Then \eqref{eqHarnack:FirstCrossingPoint} is indeed true for a function $\varphi$ of the form \eqref{eqHarnack:DefinitionTest}. Since we are only using \eqref{eqHarnack:DefinitionTest} and \eqref{eqHarnack:FirstCrossingPoint} from that point of the proof, the specific value of $\varepsilon$ is not important and the proof can be finished as presented above.

Let us also mention that this change of $\varepsilon$ is indeed necessary due to the lack of continuity of $u$. For a general lower semicontinuous function, it may happen that $t_0\in T$ and $v(x_0,t_0)>0$, but $v(x_0,t)<0$ for any $t<t_0$. Hence, $u(x_0,t_0)=\varphi(x_0,t_0)$ cannot be achieved due to a jump of $u$ at $(x_0,t_0)$. For a continuous function $u$, \eqref{eqAppendix:FirstCrossingPoint} can always be achieved with $\delta=0$.

Finally, we owe the proof of Corollary \ref{corHoelder:CorollaryHoelder}.

\begin{proof}[Proof of Corollary \ref{corHoelder:CorollaryHoelder}]
As before, we will only give the proof under the assumptions of Theorem \ref{thmHoelder:PartialHoelderAt0}\ref{itemHoelder:PartialHoelderAt0Drift}. Fix $(y,\tau)\in Q_{R/4}$ and define
\begin{align*}
v(z,r)\coloneqq u(y+z,\tau+r), \quad z\in\Rd,\ r\in I_{R/2}.
\end{align*}
For $(z,r)\in Q_{R/2}$, we get $(y+z,\tau+r)\in Q_R$ such that $v\in L^\infty(\Rd\times I_{R/2})$ solves
\begin{align*}
\partial_tv+b\cdot\nabla v-\M_\rho^+v\leq A \enspace\text{and}\enspace \partial_tv+b\cdot\nabla v-\M_\rho^-v\geq -A \enspace\text{in}\enspace Q_{R/2}
\end{align*}
in the viscosity sense. Therefore, Theorem \ref{thmHoelder:PartialHoelderAt0} yields
\begin{align*}
\left(\frac{R}{2}\right)^\alpha\frac{|v(z,r)-v(0,0)|}{d((z,r),(0,0))^\alpha}\leq \tilde{C}\left(\|v\|_{L^\infty(\Rd\times I_{R/2})}+\left(\frac{R}{2}\right)^{2s}A\right)
\end{align*}
for any $(z,r)\in Q_{R/2}\setminus Q_\rho$ and some $\alpha=\alpha(d,s,\lambda,\Lambda,R_0)\in(0,2s)$ and $\tilde{C}=\tilde{C}(d,s,\lambda,\Lambda,R_0)>0$. For any $(x,t)\in Q_{R/4}$ with $d((x,t),(y,\tau))>\rho$, consider the point $(z,r)=(x-y,t-\tau)\in Q_{R/2}$. Notice that we can assume without loss of generality that $t<\tau$. Since
\begin{align*}
u(x,t)-u(y,\tau)=u(y+z,\tau+r)-u(y,\tau)=v(x,t)-v(0,0)
\end{align*}
and
\begin{align*}
d((x,t),(y,\tau))=\max\left\{|x-y|,(\tau-t)^\frac{1}{2s}\right\}=\max\left\{|z|,(-r)^\frac{1}{2s}\right\}=d((z,r),(0,0)),
\end{align*}
we obtain
\begin{align*}
\frac{|u(x,t)-u(y,\tau)|}{d((x,t),(y,\tau))^\alpha}=\frac{|v(z,r)-v(0,0)|}{d((z,r),(0,0))^\alpha}&\leq\tilde{C}\left(\frac{R}{2}\right)^{-\alpha}\left(\|v\|_{L^\infty(\Rd\times I_{R/2})}+\left(\frac{R}{2}\right)^{2s}A\right) \\
&\leq\frac{C}{R^\alpha}\left(\|u\|_{L^\infty(\Rd\times I_R)}+R^{2s}A\right)
\end{align*}
for $C=2^\alpha\tilde{C}=C(d,s,\lambda,\Lambda,R_0)>0$.
\end{proof}


\printbibliography

@book{AMRT10,
 author = {Andreu-Vaillo, Fuensanta and Maz{\'o}n, Jos{\'e} M. and Rossi, Julio D. and Toledo-Melero, J. Juli{\'a}n},
 title = {Nonlocal diffusion problems},
 fseries = {Mathematical Surveys and Monographs},
 series = {Math. Surveys Monogr.},
 issn = {0076-5376},
 volume = {165},
 isbn = {978-0-8218-5230-9},
 year = {2010},
 publisher = {Providence, RI: American Mathematical Society (AMS); Madrid: Real Sociedad Matem{\'a}tica Espa{\~n}ola},
 language = {English},
 keywords = {45K05,45-02,45A05,45G10},
 zbMATH = {5803039},
 Zbl = {1214.45002}
}

@article {BarlesImbert-07SecondOrderIntDiffRevisit-AIHP,
   AUTHOR = {Barles, Guy and Imbert, Cyril},
     TITLE = {Second-order elliptic integro-differential equations:
              viscosity solutions' theory revisited},
   JOURNAL = {Ann. Inst. H. Poincar\'e Anal. Non Lin\'eaire},
  FJOURNAL = {Annales de l'Institut Henri Poincar\'e. Analyse Non
              Lin\'eaire},
    VOLUME = {25},
      YEAR = {2008},
    NUMBER = {3},
     PAGES = {567--585},
      ISSN = {0294-1449},
   MRCLASS = {35J60 (35B35 47Gxx 49L25)},
  MRNUMBER = {MR2422079},

}

@article {BaKa-05Holder,
    AUTHOR = {Bass, Richard F. and Kassmann, Moritz},
     TITLE = {Hölder continuity of harmonic functions with respect to
              operators of variable order},
   JOURNAL = {Comm. Partial Differential Equations},
  FJOURNAL = {Communications in Partial Differential Equations},
    VOLUME = {30},
      YEAR = {2005},
    NUMBER = {7-9},
     PAGES = {1249--1259},
      ISSN = {0360-5302},
     CODEN = {CPDIDZ},
   MRCLASS = {31B10 (31B05 45K05 60J75)},
  MRNUMBER = {2180302 (2006i:31005)},
MRREVIEWER = {D. G. Natroshvili},
       DOI = {10.1080/03605300500257677},
       URL = {http://dx.doi.org/10.1080/03605300500257677},
}

@article {BaLe-2002Harnack,
    AUTHOR = {Bass, Richard F. and Levin, David A.},
     TITLE = {Harnack inequalities for jump processes},
   JOURNAL = {Potential Anal.},
  FJOURNAL = {Potential Analysis. An International Journal Devoted to the
              Interactions between Potential Theory, Probability Theory,
              Geometry and Functional Analysis},
    VOLUME = {17},
      YEAR = {2002},
    NUMBER = {4},
     PAGES = {375--388},
      ISSN = {0926-2601},
     CODEN = {POANE2},
   MRCLASS = {60J75 (60J35)},
  MRNUMBER = {1918242 (2003e:60194)},
MRREVIEWER = {E. S. Lee},
       DOI = {10.1023/A:1016378210944},
       URL = {http://dx.doi.org/10.1023/A:1016378210944},
}

@article {BatesChmaj-1999IntDiffModelPhaseTransitions-JStatPhys,
    AUTHOR = {Bates, Peter W. and Chmaj, Adam},
     TITLE = {An integrodifferential model for phase transitions: stationary solutions in higher space dimensions},
   JOURNAL = {J. Statist. Phys.},
  FJOURNAL = {Journal of Statistical Physics},
    VOLUME = {95},
      YEAR = {1999},
    NUMBER = {5-6},
     PAGES = {1119--1139},
      ISSN = {0022-4715,1572-9613},
   MRCLASS = {82B26 (35Q99 45K05)},
  MRNUMBER = {1712445},
MRREVIEWER = {Roman\ Gielerak},
       DOI = {10.1023/A:1004514803625},
       URL = {https://doi-org.proxy2.cl.msu.edu/10.1023/A:1004514803625},
}

@article {BaFiReXi-97,
    AUTHOR = {Bates, Peter W. and Fife, Paul C. and Ren, Xiaofeng and Wang,
              Xuefeng},
     TITLE = {Traveling waves in a convolution model for phase transitions},
   JOURNAL = {Arch. Ration. Mech. Anal.},
  FJOURNAL = {Archive for Rational Mechanics and Analysis},
    VOLUME = {138},
      YEAR = {1997},
    NUMBER = {2},
     PAGES = {105--136},
      ISSN = {0003-9527},
     CODEN = {AVRMAW},
   MRCLASS = {45K05 (35Q99 73B30)},
  MRNUMBER = {1463804 (98f:45004)},
MRREVIEWER = {Vitaly A. Volpert},
}

@article {CaSi-09RegularityIntegroDiff,
    AUTHOR = {Caffarelli, Luis and Silvestre, Luis},
     TITLE = {Regularity theory for fully nonlinear integro-differential
              equations},
   JOURNAL = {Comm. Pure Appl. Math.},
  FJOURNAL = {Communications on Pure and Applied Mathematics},
    VOLUME = {62},
      YEAR = {2009},
    NUMBER = {5},
     PAGES = {597--638},
      ISSN = {0010-3640},
     CODEN = {CPAMA},
   MRCLASS = {35R09 (34B30 35B65 35J60 41A17 45J05 60J75 93E20)},
  MRNUMBER = {2494809 (2010d:35376)},
MRREVIEWER = {Fabiana Leoni},
       DOI = {10.1002/cpa.20274},
       URL = {http://dx.doi.org/10.1002/cpa.20274},
}

@misc{Chan-2012NonlocalDriftArxiv,
 author = {Chang-Lara, H\'ector A.},
 title = {Regularity for fully non linear equations with non local drift},
 year = {2012},
 eprint = {1210.4242},
 eprinttype = {arXiv},
 eprintclass = {math.AP},
 url = {https://arxiv.org/abs/1210.4242},
 arXiv = {arXiv:1210.4242}
}

@article {ChangLaraDavila-2016HolderNonlocalParabolicDriftJDE,
    AUTHOR = {Chang-Lara, H\'ector A. and D\'avila, Gonzalo},
     TITLE = {Hölder estimates for non-local parabolic equations with
              critical drift},
   JOURNAL = {J. Differential Equations},
  FJOURNAL = {Journal of Differential Equations},
    VOLUME = {260},
      YEAR = {2016},
    NUMBER = {5},
     PAGES = {4237--4284},
      ISSN = {0022-0396},
   MRCLASS = {35R09 (35B65 35K59 35R11)},
  MRNUMBER = {3437586},
       DOI = {10.1016/j.jde.2015.11.012},
       URL = {http://dx.doi.org.proxy2.cl.msu.edu/10.1016/j.jde.2015.11.012},
}

@article{Cha07,
 author = {Chasseigne, Emmanuel},
 title = {The {Dirichlet} problem for some nonlocal diffusion equations},
 fjournal = {Differential and Integral Equations. An International Journal for Theory \& Applications},
 journal = {Differential Integral Equations},
 issn = {0893-4983},
 volume = {20},
 number = {12},
 pages = {1389--1404},
 year = {2007},
 language = {English},
 keywords = {47G20,35B05,35B50,35B65},
 zbMATH = {5808045},
 Zbl = {1211.47088}
}

@article {ChasseigneChavesRossi-2006AsymptoticNonlocalDiffusion-JMPA,
    AUTHOR = {Chasseigne, Emmanuel and Chaves, Manuela and Rossi, Julio D.},
     TITLE = {Asymptotic behavior for nonlocal diffusion equations},
   JOURNAL = {J. Math. Pures Appl. (9)},
  FJOURNAL = {Journal de Math\'ematiques Pures et Appliqu\'ees. Neuvi\`eme
              S\'erie},
    VOLUME = {86},
      YEAR = {2006},
    NUMBER = {3},
     PAGES = {271--291},
      ISSN = {0021-7824},
   MRCLASS = {35R05 (35A15 35B40 45K05)},
  MRNUMBER = {2257732},
MRREVIEWER = {Rodica\ Luca},
       DOI = {10.1016/j.matpur.2006.04.005},
       URL = {https://doi-org.proxy2.cl.msu.edu/10.1016/j.matpur.2006.04.005},
}

@article{Courrege-1965formePrincipeMaximum,
  title={Sur la forme int{\'e}gro-diff{\'e}rentielle des op{\'e}rateurs de $ C^{\infty}_k$ dans $C $ satisfaisant au principe du maximum},
  author={Courr{\`e}ge, Philippe},
  journal={S{\'e}minaire Brelot-Choquet-Deny. Th{\'e}orie du Potentiel},
  volume={10},
  number={1},
  pages={1--38},
  year={1965},
  publisher={Secr{\'e}tariat math{\'e}matique}
}

@book{FR24,
 author = {Fern{\'a}ndez-Real, Xavier and Ros-Oton, Xavier},
 title = {Integro-differential elliptic equations},
 fseries = {Progress in Mathematics},
 series = {Progr. Math.},
 issn = {0743-1643},
 volume = {350},
 isbn = {978-3-031-54241-1; 978-3-031-54244-2; 978-3-031-54242-8},
 year = {2024},
 publisher = {Cham: Birkh{\"a}user},
 language = {English},
 doi = {10.1007/978-3-031-54242-8},
 keywords = {35-01,45-01,35B05,35B50,35B65,35Dxx,35J86,35R09,35R11,47G20},
 zbMATH = {7813655}
}

@book{FR22,
 author = {Fern{\'a}ndez-Real, Xavier and Ros-Oton, Xavier},
 title = {Regularity theory for elliptic {PDE}},
 fseries = {Zurich Lectures in Advanced Mathematics},
 series = {Zur. Lect. Adv. Math.},
 isbn = {978-3-98547-028-0; 978-3-98547-528-5},
 year = {2022},
 publisher = {Berlin: European Mathematical Society (EMS)},
 language = {English},
 doi = {10.4171/ZLAM/28},
 keywords = {35-02,35B65,35Jxx,35R35},
 zbMATH = {7643646},
 Zbl = {1522.35001}
}

@article{FelmerTopp-2013ConvergenceZeroOrderToNonlinearFracHeat-IsraelJournMath,
 author = {Felmer, Patricio and Topp, Erwin},
 title = {Convergence results for a class of nonlinear fractional heat equations},
 fjournal = {Israel Journal of Mathematics},
 journal = {Israel J. Math.},
 issn = {0021-2172},
 volume = {198},
 pages = {1--34},
 year = {2013},
 language = {English},
 doi = {10.1007/s11856-013-0008-9},
 keywords = {35Q79,35K05,35K15,35C15,35B99,35B30,35R11},
 zbMATH = {6221313},
 Zbl = {1287.35089}
}

@article{FelmerTopp-2015UnifEquicontinuityZeroOrderEqApproxFracLaplace-CPDE,
 author = {Felmer, Patricio and Topp, Erwin},
 title = {Uniform equicontinuity for a family of zero order operators approaching the fractional {Laplacian}},
 fjournal = {Communications in Partial Differential Equations},
 journal = {Comm. Partial Differential Equations},
 issn = {0360-5302},
 volume = {40},
 number = {9},
 pages = {1591--1618},
 year = {2015},
 language = {English},
 doi = {10.1080/03605302.2015.1045072},
 keywords = {35R09,35B65,35B51,35R11},
 zbMATH = {6497758},
 Zbl = {1333.35314}
}

@article{FelmerDosPrazeresTopp-2018InteriorRegZeroOrderOpsFracLaplace-IsrealJournMath,
 author = {Felmer, Patricio and dos Prazeres, Disson and Topp, Erwin},
 title = {Interior regularity results for zeroth order operators approaching the fractional {Laplacian}},
 fjournal = {Israel Journal of Mathematics},
 journal = {Israel J. Math.},
 issn = {0021-2172},
 volume = {228},
 number = {2},
 pages = {835--861},
 year = {2018},
 language = {English},
 doi = {10.1007/s11856-018-1786-x},
 keywords = {35J25,35R11,35B65,35B40},
 zbMATH = {6976753},
 Zbl = {1404.35147}
}

@article {GuSc-12ABParma,
	title={{A}leksandrov--{B}akelman--{P}ucci type estimates for integro-\linebreak differential equations},
	author={Guillen, Nestor and Schwab, Russell W},
	 fjournal = {Archive for Rational Mechanics and Analysis},
 	journal = {Arch. Ration. Mech. Anal.},
 	issn = {0003-9527},
	volume={206},
	number={1},
	pages={111--157},
	year={2012},
	publisher={Springer-Verlag},
    url = {http://dx.doi.org/10.1007/s00205-012-0529-0},
    doi = {10.1007/s00205-012-0529-0},
}

@article{GS16,
 author = {Guillen, Nestor and Schwab, Russell W.},
 title = {Neumann homogenization via integro-differential operators},
 fjournal = {Discrete and Continuous Dynamical Systems},
 journal = {Discrete Contin. Dynam. Systems},
 issn = {1078-0947},
 volume = {36},
 number = {7},
 pages = {3677--3703},
 year = {2016},
 language = {English},
 doi = {10.3934/dcds.2016.36.3677},
 keywords = {35B27,35J60,35R09,45K05,47G20,49L25,49N70,60J75,93E20},
 zbMATH = {6560502},
 Zbl = {1343.35020}
}

@article{GuSc-2019MinMaxNonlocalCALCVARPDE,
 author = {Guillen, Nestor and Schwab, Russell W.},
 title = {Min-max formulas for nonlocal elliptic operators},
 fjournal = {Calculus of Variations and Partial Differential Equations},
 journal = {Calc. Var. Partial Differential Equations},
 issn = {0944-2669},
 volume = {58},
 number = {6},
 pages = {79},
 year = {2019},
 language = {English},
 doi = {10.1007/s00526-019-1631-z},
 keywords = {35J99,58J05},
 zbMATH = {7134725},
 Zbl = {1429.35108}
}

@article{GuSc-2019MinMaxEuclideanNATMA,
 author = {Guillen, Nestor and Schwab, Russell W.},
 title = {Min-max formulas for nonlocal elliptic operators on {Euclidean} space},
 fjournal = {Nonlinear Analysis. Theory, Methods \& Applications. An International Multidisciplinary Journal},
 journal = {Nonlinear Anal.},
 issn = {0362-546X},
 volume = {193},
 pages = {51},
 year = {2020},
 language = {English},
 doi = {10.1016/j.na.2019.02.021},
 keywords = {35J60,35J70},
 zbMATH = {7187827},
 Zbl = {1436.35158}
}

@article {KassmannMimica-2013IntrinsicScalingJEMS,
    AUTHOR = {Kassmann, Moritz and Mimica, Ante},
     TITLE = {Intrinsic scaling properties for nonlocal operators},
   JOURNAL = {J. Eur. Math. Soc. (JEMS)},
  FJOURNAL = {Journal of the European Mathematical Society (JEMS)},
    VOLUME = {19},
      YEAR = {2017},
    NUMBER = {4},
     PAGES = {983--1011},
      ISSN = {1435-9855,1435-9863},
   MRCLASS = {35R09 (31B05 35B65 45P05 47G20 60J75)},
  MRNUMBER = {3626549},
MRREVIEWER = {Shulin\ Zhou},
       DOI = {10.4171/JEMS/686},
       URL = {https://doi.org/10.4171/JEMS/686},
}

@article {KassRangSchwa-2014RegularityDirectionalINDIANA,
    AUTHOR = {Kassmann, Moritz and Rang, Marcus and Schwab, Russell W.},
     TITLE = {Integro-differential equations with nonlinear directional
              dependence},
   JOURNAL = {Indiana Univ. Math. J.},
  FJOURNAL = {Indiana University Mathematics Journal},
    VOLUME = {63},
      YEAR = {2014},
    NUMBER = {5},
     PAGES = {1467--1498},
      ISSN = {0022-2518},
   MRCLASS = {35R09 (35B65 35D40 35R11 60J45)},
  MRNUMBER = {3283558},
MRREVIEWER = {Jens Wirth},
       DOI = {10.1512/iumj.2014.63.5394},
       URL = {http://dx.doi.org/10.1512/iumj.2014.63.5394},
}

@article {KassmannWeidner-2024ParabolicHarnack-Duke,
    AUTHOR = {Kassmann, Moritz and Weidner, Marvin},
     TITLE = {The parabolic {H}arnack inequality for nonlocal equations},
   JOURNAL = {Duke Math. J.},
  FJOURNAL = {Duke Mathematical Journal},
    VOLUME = {173},
      YEAR = {2024},
    NUMBER = {17},
     PAGES = {3413--3451},
      ISSN = {0012-7094,1547-7398},
   MRCLASS = {47G20 (31B35 35B65 35D30 35K90)},
  MRNUMBER = {4848678},
MRREVIEWER = {Minhyun\ Kim},
       DOI = {10.1215/00127094-2024-0008},
       URL = {https://doi-org.proxy2.cl.msu.edu/10.1215/00127094-2024-0008},
}

@article{KS81,
 author = {Krylov, N. V. and Safonov, M. V.},
 title = {A certain property of solutions of parabolic equations with measurable coefficients},
 fjournal = {Mathematics of the USSR. Izvestiya},
 journal = {Math. USSR-Izv.},
 issn = {0025-5726},
 volume = {16},
 pages = {151--164},
 year = {1981},
 language = {English},
 doi = {10.1070/IM1981v016n01ABEH001283},
 keywords = {35J15,35B65,35K10},
 zbMATH = {3726824},
 Zbl = {0464.35035}
}

@article{SS16,
 author = {Schwab, Russell W. and Silvestre, Luis},
 title = {Regularity for parabolic integro-differential equations with very irregular kernels},
 fjournal = {Analysis \& PDE},
 journal = {Anal. PDE},
 issn = {2157-5045},
 volume = {9},
 number = {3},
 pages = {727--772},
 year = {2016},
 language = {English},
 doi = {10.2140/apde.2016.9.727},
 keywords = {47G20,35R09},
 zbMATH = {6600511},
 Zbl = {1349.47079}
}

@article {Silv2006-HolderIntDiff-IUMJ,
    AUTHOR = {Silvestre, Luis},
     TITLE = {Hölder estimates for solutions of integro-differential
              equations like the fractional {L}aplace},
   JOURNAL = {Indiana Univ. Math. J.},
  FJOURNAL = {Indiana University Mathematics Journal},
    VOLUME = {55},
      YEAR = {2006},
    NUMBER = {3},
     PAGES = {1155--1174},
      ISSN = {0022-2518},
     CODEN = {IUMJAB},
   MRCLASS = {45K05 (35B65 60J75)},
  MRNUMBER = {2244602 (2007b:45022)},
       DOI = {10.1512/iumj.2006.55.2706},
       URL = {http://dx.doi.org/10.1512/iumj.2006.55.2706},
}

@inproceedings {Sil14,
    AUTHOR = {Silvestre, Luis},
     TITLE = {Regularity estimates for parabolic integro-differential equations and applications},
 BOOKTITLE = {Proceedings of the {I}nternational {C}ongress of
              {M}athematicians---{S}eoul 2014. {V}ol. {III}},
     PAGES = {873--894},
 PUBLISHER = {Kyung Moon Sa, Seoul},
      YEAR = {2014},
      ISBN = {978-89-6105-806-3; 978-89-6105-803-2},
   MRCLASS = {45K05 (35B65 35R09)},
  MRNUMBER = {3729056},
}


\end{document}